\newcommand{\R}{\mathbb{R}}
\newcommand{\N}{\mathbb{N}}
\newcommand{\cH}{\mathcal{H}}
\DeclareMathOperator{\prox}{prox}
\DeclareMathOperator{\dom}{dom}
\DeclareMathOperator{\dist}{dist}
\DeclareMathOperator*{\argmin}{\arg\!\min}
\newtheorem{prop}{Proposition}[section]
\newtheorem*{prop*}{Proposition}
\newtheorem{rmk}[prop]{Remark}
\newtheorem*{rmk*}{Remark}
\newtheorem{thm}[prop]{Theorem}
\newtheorem*{thm*}{Theorem}
\newtheorem*{defn*}{Definition}
\newtheorem{lem}[prop]{Lemma}
\newtheorem*{lem*}{Lemma}
\newtheorem*{cor*}{Corollary}
\newtheorem{ass}{Assumption}
\newcommand{\customlabel}[2]{%
	\protected@write \@auxout {}{\string \newlabel {#1}{{#2}{\thepage}{#2}{#1}{}} }%
	\hypertarget{#1}{#2}
}
\newcommand{\labterm}[2]{\customlabel{#1}{\text{$#2$}}}
\title{Accelerating Diagonal Methods for Bilevel Optimization: Unified Convergence via Continuous-Time Dynamics}
\author{Radu I. Bo\c t\thanks{Faculty of Mathematics, University of Vienna, Oskar-Morgenstern-Platz 1, 1090 Vienna, Austria.\\ The research of RIB has been supported by the Austrian Science Fund (FWF), projects W 1260 and P 34922-N, respectively. DAH was supported by the Doctoral Programme \emph{Vienna Graduate School on Computational Optimization (VGSCO)}, funded by the Austrian Science Fund (FWF), project W1260-N35. \texttt{email:} \{radu.bot\}\{enis.chenchene\}\{robert.csetnek\}\{david.alexander.hulett\}@univie.ac.at} \and Enis Chenchene\footnotemark[1] \and E. Robert Csetnek\footnotemark[1] \and David A. Hulett\footnotemark[1]}
\date{\today}
\begin{document}

\maketitle

\begin{abstract}
	We analyze fast diagonal methods for simple bilevel programs. Guided by the analysis of the corresponding continuous-time dynamics, we provide a unified convergence analysis under general geometric conditions, including H{\"o}lderian growth and the Attouch--Czarnecki condition. Our results yield explicit convergence rates and guarantee weak convergence to a solution of the bilevel problem. In particular, we improve and extend recent results on accelerated schemes, offering novel insights into the trade-offs between geometry, regularization decay, and algorithmic design. Numerical experiments illustrate the advantages of more flexible methods and support our theoretical findings.
\end{abstract}

\noindent \textbf{Key Words.} bilevel optimization, Tikhonov regularization, Nesterov momentum, convergence rates, Lyapunov analysis, diagonal scheme\\

\noindent \textbf{AMS subject classification.} 90C25, 91A65, 90C99, 37L05, 65K10, 65K05, 46N10  

\section{Introduction}

Bilevel optimization is an increasingly active area in optimization that consists in finding a minimizer of an \emph{outer} function among the minimizers of an \emph{inner} function. It originated in economic game theory in the 50s \cite{dz20}, and found enormous application in modern applied mathematics, such as in hyperparameter optimization \cite{kp13, chb23}, meta-learning \cite{acss19}, and reinforcement learning \cite{bmk20}, see \cite{dz20} and the references therein for a comprehensive account.  

In this paper, we consider a class of \emph{bilevel} optimization problems, where the lower-level problem is independent of the upper-level decision variable, within the broader framework of \emph{hierarchical optimization} that read as:
\begin{equation}\label{eq:bilevel_problem}
        \begin{aligned}
        \min \ & H(x) := h(x) + \hat h(x),\\
        \text{subject to} \ & x \in \argmin_{z \in \cH} \ F(z) := f(z) + \hat f(z),
        \end{aligned}
\end{equation}
where $\cH$ is a real Hilbert space, $\hat h, \hat f : \cH \to \R \cup \{+\infty\}$ are proper, convex and lower semicontinuous functions, and $h, f :  \cH \to \R$ are convex and Fr\'echet differentiable functions with $L_{\nabla h}$- and $L_{\nabla f}$- Lipschitz continuous gradients, respectively. We further assume that $\argmin F$, the set of global minimizers of $F$, is nonempty and denote by $\min F$ the infimal value of $F$. Moreover, we assume that $\inf_{x \in \cH} H(x)>-\infty$, that the set of solutions to \eqref{eq:bilevel_problem} is nonempty, and denote its optimal value by $\min_{\argmin F} H$.

A classical approach to tackle \eqref{eq:bilevel_problem} is by an approximation argument, the so-called \emph{viscosity method} \cite{attouch96, ab93}. This method consists of replacing \eqref{eq:bilevel_problem} with the regularized optimization problem
\begin{equation}\label{eq:regularized}
	\min_{x \in \cH} \ F(x) + \varepsilon H(x), \quad \text{for} \ \varepsilon >0,
\end{equation} 
and recovering a solution to \eqref{eq:bilevel_problem} by considering (weak) cluster points of minimizers of \eqref{eq:regularized} as $\varepsilon\to 0$. This classical approach appears across a wide range of mathematical disciplines, including the calculus of variations \cite{et74}, numerical linear algebra \cite{cs94}, optimal transport \cite{cuturi13}, control theory and partial differential equations \cite{cl83}, among others. Its origins trace back to the work of Tikhonov \cite{tichonov63} in the context of ill-posed linear problems, where the regularization term takes the form $H:=\|\cdot\|^2$. While in the so-called \emph{exact penalization} setting \cite{ft08}, solving \eqref{eq:regularized} for $\varepsilon$ small enough actually yields solutions to \eqref{eq:bilevel_problem}, such a threshold is usually not known a-priori. Thus, successive solutions of \eqref{eq:regularized} with $\varepsilon\equiv \varepsilon_k$ and $\varepsilon_k\to 0$ as $k\to +\infty$ are needed to approach a solution to \eqref{eq:bilevel_problem}. Quite surprisingly, it turns out that we can replace these successive minimization problems by a \emph{single} iteration of a minimization algorithm. This defines the so-called \emph{diagonal} or \emph{Tikhonov} methods.

The first breakthrough in this direction was proposed by Cabot in \cite{cabot05}, who replaced the minimization oracle \eqref{eq:regularized} with a proximal step on \eqref{eq:regularized} and highlighted the role of a slow-vanishing behavior of $(\varepsilon_k)_{k \in \N}$. Later, Solodov provided the first gradient-type algorithm \cite{solodov07} designed for solving \eqref{eq:bilevel_problem} with $\hat h=0$ and $\hat f=\iota_D$, where $D \subseteq \cH$ is a nonempty, convex and closed set and $\iota_D$ denotes its \emph{indicator function}. In the smooth setting, i.e., $\hat h= \hat f = 0$, these methods can be understood as a discretization of the dynamical system
\begin{equation}\label{eq:intro_tichonov_flow}
	\dot x(t) + \nabla f(x(t)) + \varepsilon(t) \nabla h(x(t)) = 0, \quad \text{for} \ t \geq t_0,
\end{equation}
where $t_0 \geq 0$ and $\varepsilon:[t_0, +\infty)\to \R_+$ is a \emph{vanishing regularization function}. Observe that indeed \eqref{eq:intro_tichonov_flow} can be seen as a gradient flow applied to \eqref{eq:regularized} with a vanishing parameter function $t \mapsto \varepsilon(t)$. The convergence of \eqref{eq:intro_tichonov_flow} has been first established by Attouch and Czarnecki in \cite{ac10}, who showed that, under a certain growth condition on $f$ (Assumption \ref{ass:attouch_czarnecki}), $x(t)$ converges weakly to an optimal solution of \eqref{eq:bilevel_problem} with a convergence rate for $f(x(t)) - \min f$ expressed in terms of $\varepsilon(t)$, cf.~\cite[Theorem 5.1]{ac10}. More recently, by replacing $\nabla f$ with a  general monotone and Lipschitz continuous operators and setting $h:=\tfrac{1}{2}\|\cdot\|^2$, \eqref{eq:intro_tichonov_flow} and some of its discretizations have gained renewed interest for their ability to yield strong convergence toward the minimal norm solution, as well as their connection to Halpern-type methods \cite{ss17, lieder21, yr21}, see also \cite{bc24, bcf24}.

Building on the work of Cabot and Solodov, Peypouquet proposed in \cite{peypouquet12} an adaptation of the Attouch--Czarnecki condition \cite{ac10} for gradient systems, establishing weak convergence of the generated sequence to a solution of \eqref{eq:bilevel_problem}. In finite-dimensional settings, convergence results can still be obtained under milder assumptions, e.g., coercivity of $H$, even in the general framework of \eqref{eq:bilevel_problem}. For example, \cite{ms23}, which complements and extends \cite{ky21}, highlights an inherent trade-off between inner and outer function performance, similar to \eqref{eq:discussion_rates_3}. Recently, backtracking strategies have been developed for functions with merely locally Lipschitz continuous gradients \cite{ltvp25}. Additional notable extensions---without claiming to be exhaustive---include bundle methods \cite{solodov07_bundle}, cutting plane approaches \cite{cjyhm24}, and algorithms tailored for variational inequality inner problems \cite{ky21}.

Building on this rationale, Merchav, Sabach, and Teboulle \cite{mst24} were the first to observe that applying a single iteration of Nesterov’s fast gradient method---or FISTA in the composite (smooth $+$ non-smooth) setting---to \eqref{eq:regularized} can accelerate convergence, while still generating sequences that converge weakly to solutions of \eqref{eq:bilevel_problem}. In continuous time, these methods can be interpreted as discretizations of the dynamical system \cite{sbc16}
\begin{equation}\label{eq:intro_su_boyd_candes_tichonov}
	\ddot x(t) + \frac{\alpha}{t}\dot x(t) + \nabla f(x(t)) + \varepsilon(t) \nabla h(x(t)) = 0, \quad \text{for} \ t \geq t_0,
\end{equation}
where $t_0 \geq 0$ and $\alpha>3$. This system is known to provide a theoretical foundation for analyzing corresponding discrete numerical algorithms, including those involving non-smooth proximal terms. Building on the extensive body of work developed over the past decade on second-order continuous and discrete-time dynamical systems---greatly shaped by Attouch's influential contributions---and guided by the analysis of the continuous-time systems \eqref{eq:intro_tichonov_flow} and \eqref{eq:intro_su_boyd_candes_tichonov}, we propose two new flexible and fast numerical algorithms. Our approach allows us to i) devise more general numerical schemes than in \cite{mst24, ltvp25}, see Algorithms \ref{alg:first_order} and \ref{alg:second_order}, and ii) significantly improve several convergence results therein, yielding, among other results, little-$o$ rates and weak convergence of trajectories in a Hilbert space setting, see Section \ref{sec:contribution} for a complete overview. In doing so, we unveil an intriguing one-to-one correspondence on the convergence behavior of \eqref{eq:intro_tichonov_flow} and \eqref{eq:intro_su_boyd_candes_tichonov} and the corresponding numerical algorithms, which shows that the techniques for their analysis are fundamentally equivalent.

\subsection{Contribution}\label{sec:contribution}

We investigate the asymptotic behavior of two bilevel optimization algorithms: a proximal-gradient method (Algorithm~\ref{alg:first_order}) and its accelerated variant with Nesterov momentum (Algorithm~\ref{alg:second_order}). These algorithms are derived as discrete-time counterparts of the dynamical systems \eqref{eq:intro_tichonov_flow} and \eqref{eq:intro_su_boyd_candes_tichonov}, respectively. For simplicity, we restrict ourselves to regularization parameters of the form
\begin{equation}\label{eq:def_epsilon}
	\varepsilon_k := \frac{c}{(k + \beta )^\delta}, \quad \text{for $\beta, c, \delta > 0$ and $k\geq 0$}.
\end{equation}
Additionally, we focus on a broad geometric setting in which the inner function has either a $\rho$-H{\"o}lderian growth (Assumption \ref{ass:holderian_growth}) or satisfies the weaker Attouch--Czarnecki condition (Assumption \ref{ass:attouch_czarnecki}), a setting that offers a good trade-off between geometric assumptions and convergence guarantees. We emphasize the critical role of the interplay between the geometry of $F$ and the vanishing behavior of $(\varepsilon_k)_{k \in \N}$ (see Figure~\ref{fig:geometric_setting}), and its impact on the rates of convergence. We defer results under milder assumptions, aligned to those in \cite{ms23, ltvp25}, to a miscellaneous Section \ref{sec:discussion}.

Our main results, Theorem \ref{thm: first order algorithm} and Theorem \ref{thm:second_order_main}, can be summarized as follows: Let $(x_{k})_{k\geq 0}$ be the sequence generated by Algorithm \ref{alg:first_order} (in which case we set $\eta = 1$) or Algorithm \ref{alg:second_order} (in which case we set $\eta=2$). Then:
	\begin{itemize}
		\item Case $\delta > \eta$: It holds $F(x_k) - \min F= \mathcal{O}(k^{-1})$ and $F(x_k) - \min F = \mathcal{O}(k^{-2})$ as $k\to +\infty$ for Algorithm \ref{alg:first_order} and \ref{alg:second_order}, respectively. However, in either case, there is no guarantee that $(x_k)_{k\geq 0}$ converges to a solution to \eqref{eq:bilevel_problem} and, indeed, simple numerical experiments show that this is not the case in general.
		\item Case $\delta = \eta$: If $F$ satisfies the Attouch--Czarnecki condition, i.e., Assumption \ref{ass:attouch_czarnecki} with respect to $\eta$: 
		\begin{equation}
		\begin{aligned}
			\text{Algorithm \ref{alg:first_order}}: && F(x_{k}) - \min F = o\big(k^{-1}\big), && \text{Algorithm \ref{alg:second_order}}: && F(x_{k}) - \min F = o\big(k^{-2}\big).
		\end{aligned}
		\end{equation}
		Furthermore, in both cases $H(x_{k}) \to H(x^{*})$ as $k\to +\infty$, and $(x_{k})_{k\geq 0}$ converges weakly towards a solution to the bilevel problem \eqref{eq:bilevel_problem}.
		\item Case $\frac{\eta}{\rho^\star} < \delta < \eta$: If $F$ satisfies Assumption \ref{ass:holderian_growth} with exponent $\rho$, then
		\begin{equation}\label{eq:intro_trade_off}
			\begin{aligned}
			&\text{Algorithm \ref{alg:first_order}}: && F(x_{k}) - \min F = o\big(k^{-1}\big), \quad \text{and} \quad |H(x_{k}) - \min\nolimits_{\argmin F} H| = o\big(k^{\delta - 1}\big),\\
			&\text{Algorithm \ref{alg:second_order}}: && F(x_{k}) - \min F = o\big(k^{-2}\big), \quad \text{and} \quad |H(x_{k}) - \min\nolimits_{\argmin F} H| = o\big(k^{\delta - 2}\big),
			\end{aligned}
		\end{equation}
		as $k\to +\infty$. Furthermore, in both cases $(x_{k})_{k\geq 0}$ converges weakly towards a solution to the bilevel problem \eqref{eq:bilevel_problem}.
	\end{itemize}
In the smooth case, i.e., $\hat h= \hat f = 0$, all our results admit continuous-time counterparts for the dynamical systems \eqref{eq:intro_tichonov_flow} and \eqref{eq:intro_su_boyd_candes_tichonov}, which are of independent interest and are discussed in Appendix \ref{sec:continuous_time}.

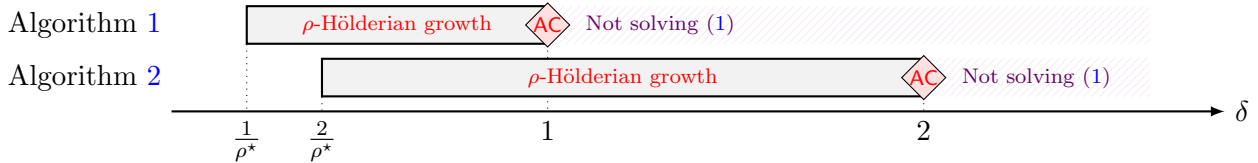
\begin{figure}[t]
	\begin{tikzpicture}[>=latex]
		
		\draw[->, thick] (0,0.6) -- (14,0.6) node[right] {$\delta$};
		
		\draw[dotted] (1, 2) -- (1,0.6) node[below] {$ \tfrac{1}{\rho^\star}$};
		\draw[dotted] (2, 1.3) -- (2,0.6) node[below] {$\tfrac{2}{\rho^\star}$};
		\draw[dotted] (5, 2) -- (5,0.6) node[below] {1};
		\draw[dotted] (10,1.3) -- (10,0.6) node[below] {2};
		
		\node[anchor=east] at (0,1.75) {Algorithm \ref{alg:first_order}};
		\node[anchor=east] at (0,1.05) {Algorithm \ref{alg:second_order}};
		
		\draw[fill=gray!10, thick] (1,2) -- (5,2) -- (5,1.5) -- (1,1.5) -- cycle;
		\node[anchor=center] at (3,1.75) {\scriptsize{\textcolor{red}{$\rho$-H{\"o}lderian growth}}};
		
		\draw[fill=gray!10, thick] (2,1.3) -- (10,1.3) -- (10,0.8) -- (2,0.8) -- cycle;
		\node[anchor=center] at (6,1.05) {\scriptsize{\textcolor{red}{$\rho$-H{\"o}lderian growth}}};
		
		\fill[pattern=north east lines, pattern color=violet!70, opacity=0.3] (5,1.5) rectangle (13,2);
		\node at (6.5,1.75) {\scriptsize \textcolor{violet!80!black}{Not solving \eqref{eq:bilevel_problem}}};
		
		\fill[pattern=north east lines, pattern color=violet!70, opacity=0.3] (10, 0.8) rectangle (13,1.3);
		\node at (11.5,1.05) {\scriptsize \textcolor{violet!80!black}{Not solving \eqref{eq:bilevel_problem}}};
		
		\node[diamond, draw, text=red, fill=pink!50, minimum width=0.1cm, minimum height = 0.3cm, inner sep=0.1pt] (d) at (5,1.75) {\scriptsize{\textsf{AC}}};
		
		\node[diamond, draw, text=red, fill=pink!50, minimum width=0.1cm, minimum height = 0.3cm, inner sep=0.1pt] (d) at (10,1.05) {\scriptsize{\textsf{AC}}};
	\end{tikzpicture}
\caption{Geometric setting considered in this paper, where $\rho^*$ is the dual exponent of $\rho$, see \eqref{eq:def_rho_star}. For a discussion of results obtained under even milder assumptions, see Section~\ref{sec:discussion}.}
\label{fig:geometric_setting}
\end{figure}

\subsection{Relation to previous work}

Accompanied with Section \ref{sec:discussion}, our comprehensive analysis complements and improves the recent works \cite{ms23, ltvp25, mst24}. First, all results are formulated in a infinite dimensional Hilbert space setting, in line with \cite{peypouquet12, ac10}, where the case $\eta=1$ and $\hat h =\hat f=0$ is analyzed with a general regularization sequence/parameter. However, only weak convergence to an optimal solution of \eqref{eq:bilevel_problem} is provided (in both continuous and discrete time), along with a convergence rate for the inner function (in continuous time only).

In the case $\eta = 1$, that is, for Algorithm~\ref{alg:first_order}---which coincides with the proximal-gradient method introduced in \cite{ltvp25}---we provide a convergence analysis by choosing the regularization sequence $(\varepsilon_k)_{k \in \N}$ as in \eqref{eq:def_epsilon} and making the geometric assumptions of Figure \ref{fig:geometric_setting}, but allowing larger step sizes. As discussed in Section \ref{sec:discussion}, our analysis can be extended to accommodate milder assumptions---such as the coercivity of $F$ or $H$---and more general parameter choices. However, without stronger assumptions, convergence rates are typically only available in the \emph{ergodic} or \emph{best-iterate} sense, and convergence to solutions to \eqref{eq:bilevel_problem} can generally only be characterized by the vanishing distance to the solution set of \eqref{eq:bilevel_problem}, and stated in finite-dimensional settings; see Section \ref{sec:discussion} for further details. We obtain best-iterate results similar to \cite[Theorem 3]{ms23}, but assuming coercivity on either function $F$ or $H$ instead of $H$, and for the method in Algorithm \ref{alg:first_order}, instead of the iterative scheme in \cite{ms23}. Furthermore, our analysis can be extended to accommodate general parameter sequences (see Section \ref{sec:discussion}). However, we find little motivation to do so, and numerical results---e.g. those reported in \cite{bc24}---seem to indicate that polynomial choices tend to give the best performance in Tikhonov methods.

In the case $\eta=2$, that is, for Algorithm \ref{alg:second_order}, the only method available for a fair comparison is that of  \cite{mst24}. Our results substantially extend those in \cite{mst24} in several directions. First, we introduce a more general algorithmic framework. Second, we allow for $\rho$-H{\"o}lderian growth with any $\rho\in (1, 2]$, rather than restricting to the quadratic case $\rho=2$. Third, our analysis is carried out in an infinite-dimensional Hilbert space setting. Fourth, for $\delta \in (\frac{2}{\rho^\star}, 2)$, we refine the convergence rates established in \cite{mst24}, improving the simultaneous bounds from $\mathcal{O}(k^{-2})$ and $\mathcal{O}(k^{\delta - 2})$ to  $o(k^{-2})$ and $o(k^{\delta - 2})$ as $k\to+\infty$, respectively, by building on techniques from \cite{acfr23}. In addition, we complement the results of \cite{mst24} by treating the case $\delta=2$ under the Attouch--Czarnecki assumption, which is weaker than the H{\"o}lderian growth condition, and by establishing weak convergence of the sequence$(x_k)_{k \in \N}$ in this setting. Finally, our analysis departs fundamentally from prior work in that all results are derived from the dissipation of a single energy functional---the so-called \emph{Lyapunov function}.

We also emphasize that each of our results admits a continuous-time counterpart, presented in Appendix \ref{sec:continuous_time}, which in fact guided the convergence analysis of the corresponding numerical methods.

\subsection{Preliminaries and notation}\label{sec:preliminaries}

For a proper function $f: \cH \to \R\cup \{+\infty\}$ we denote by $\dom f := \{x \in \cH \ : \ f(x) < +\infty\}$ its \emph{domain}, by $\min f$ its \emph{infimal value}, and by $\argmin f := \{x \in \cH \ : \ f(x) = \min f\}$ its \emph{set of minimizers}. For a proper, convex and lower semicontinuous function $f: \cH \to \R\cup \{+\infty\}$ we denote by $f^* : \cH \to \R\cup \{+\infty\}$  its \emph{Fenchel--Moreau conjugate}, defined by $f^{*}(z) := \sup_{x\in \cH} \{ \langle x, z\rangle - f(x)\}$. The \emph{proximity operator} of $f$ with modulus $s >0$ is defined for all $x \in \cH$ by 
\begin{equation*}
    \prox_{s f}(x):= \argmin_{y \in \cH} \Big\{ f(y) + \frac{1}{2s}\|y - x\|^2\Big\}.
\end{equation*}
If  $f: \cH \to \R$ is convex and Fréchet differentiable with a $L_{\nabla f}$-Lipschitz continuous gradient, we say that $f$ is \emph{$L$-smooth}. We denote by $\iota_C$ the indicator function associated to a nonempty, convex and closed set $C \subseteq \cH$, by $\sigma_C:=\iota_C^*$ the support function of the set $C$, and by $N_C:=\partial \iota_C$ its normal cone.

In our geometric setting, we assume Hölderian error bound for the inner function, and a weaker summability condition due to Attouch and Czarnecki in \cite{ac10}. We will now present both assumptions. 
\begin{ass}[H{\"o}lderian growth condition]\label{ass:holderian_growth}
     A proper, convex and lower semicontinuous function $f: \cH \to \R\cup \{+\infty\}$ with $\argmin f \neq \emptyset$ is said to satisfy a \emph{H{\"o}lderian growth condition with exponent $\rho\in (1, 2]$} if for some $\tau > 0$ we have
    \begin{equation}\label{eq: Hölderian error bound for f}
        \tau \rho^{-1} \dist(x, \argmin f)^{\rho} \leq f(x) - \min f \quad \text{for all} \ x\in \mathcal{H}.
    \end{equation}
\end{ass}
\begin{ass}[Attouch and Czarnecki, \cite{ac10}]\label{ass:attouch_czarnecki}
    A proper, convex and lower semicontinuous function $f: \cH \to \R\cup \{+\infty\}$ with $\argmin f \neq \emptyset$ is said to satisfy the \emph{Attouch--Czarnecki condition} with respect to $\eta\in \{1, 2\}$ if for any solution $x^{*}$ to the bilevel problem \eqref{eq:bilevel_problem}, for all $p \in N_{\argmin f}(x^{*})$, and $(\varepsilon_k)_{k\geq 0}$ defined as in \eqref{eq:def_epsilon} we have that
	\begin{equation}\label{eq:acp_assumption discrete}
		\sum_{k = 0}^{+\infty} k^{\eta - 1} \bigg[ (f - \min f)^{*}\left( \varepsilon_k p \right) - \sigma_{\argmin f}\left( \varepsilon_k p \right)\bigg] < +\infty.
	\end{equation}
\end{ass}

\noindent The Attouch--Czarnecki condition is known to be weaker than the H{\"o}lderian growth condition, since, whenever $f$ satisfies Assumption \ref{ass:holderian_growth},
\begin{equation}\label{eq: bound for conjugates}
	0\leq (f - \min f)^{*}(z) - \sigma_{\argmin f}(z) \leq \tau^{1-\rho^\star}\frac{1}{\rho^\star} \| z\|^{\rho^\star} \quad \text{for all} \ z\in\mathcal{H},
\end{equation}
where the first inequality is a consequence of the Young--Fenchel inequality, and the second can be derived as in \cite[Section 3.2]{peypouquet12}. Here, the dual conjugate exponent $\rho^\star$ is given by
\begin{equation}\label{eq:def_rho_star}
\frac{1}{\rho} + \frac{1}{\rho^\star} = 1.
\end{equation}
Therefore, as $\varepsilon_k \simeq k^{-\delta}$, Assumption \ref{ass:holderian_growth} implies Assumption \ref{ass:attouch_czarnecki} when $\frac{\eta}{\rho^\star} < \delta$. A further mild technical assumption we will require is a qualification condition.
\begin{ass}[Qualification condition]\label{ass:qualification}
The functions $H$ and $F$ in \eqref{eq:bilevel_problem} are such that
\begin{equation}
	\partial (H + \iota_{\argmin F})(x^*) = \partial H(x^*) + N_{\argmin F}(x^*) \quad \text{for any solution} \ x^* \ \text{to \eqref{eq:bilevel_problem}} .
\end{equation} 
\end{ass}   
\noindent The qualification condition is known to be fulfilled if either there exists $x' \in \dom H \cap \argmin F$ such that $H$ is continuous at $x'$ or if $0 \in \text{int}(\dom H  - \argmin F)$. This assumption proves to be crucial for controlling the (sign-less) residual of \eqref{eq:regularized} with $\varepsilon>0$  in terms of its value at an optimal solution to \eqref{eq:bilevel_problem}. Indeed, picking $p^* \in N_{\argmin F}(x^*)$ such that $-p^* \in \partial H(x^*)$ given by Assumption \ref{ass:qualification}, we get that for all $x \in \cH$
\begin{equation}\label{eq: Attouch-Czarnercki implies the integrability condition of the lemma}
	\begin{aligned}
	 - \Bigl[ F(x) - F(x^{*}) + \varepsilon (H(x) - H(x^{*}))\Bigr] & \leq - (F(x) - \min F) + \varepsilon \langle p^*, x - x^{*}\rangle \\
& = \Bigl[ \langle \varepsilon p^{*}, x\rangle - (F - \min F)(x)\Bigr] - \langle \varepsilon p^{*}, x^{*}\rangle \\ & \leq (F - \min F)^{*} (\varepsilon p^{*}) - \sigma_{\argmin F}(\varepsilon p^{*}),
\end{aligned}
\end{equation}
where we used $\langle \varepsilon p^{*}, x^{*}\rangle=\sigma_{\argmin F}(\varepsilon p^{*})$. Although the Attouch--Czarnecki condition in Assumption \ref{ass:attouch_czarnecki} might appear abstract, \eqref{eq: Attouch-Czarnercki implies the integrability condition of the lemma} shows that it comes naturally in the analysis of Tikhonov-type methods.

\subsection{The link between first and second-order energy estimates}

The analysis of the continuous-time systems \eqref{eq:intro_tichonov_flow} and \eqref{eq:intro_su_boyd_candes_tichonov} presented in Appendix \ref{sec:continuous_time} serves as a foundational guide for conducting the discrete-time Lyapunov analysis of the corresponding numerical schemes---Algorithm \ref{alg:first_order} and Algorithm \ref{alg:second_order}. Inspired by \cite{acfr23}, we introduce a discrete Lyapunov energy sequence $(E_k)_{k\geq 1}$, defined in \eqref{eq:lyapunov_first_order} and \eqref{eq:lyapunov_second_order}, of the form
\begin{equation}\label{eq: general energy function, lemma, discrete}
	E_{k} := t_{k}^{\eta}\left( \Psi_{k-1}(x_k) - \Psi_{k-1}(x^*)\right) + V_{k}, \quad \text{for} \ k\geq 1,
\end{equation}
where $\Psi_{k}(x):=F(x) + \varepsilon_k H(x)$ is the regularized function \eqref{eq:regularized} with regularization parameter  $\varepsilon_k$ defined as in \eqref{eq:def_epsilon}, $V_k\geq 0$ is specified in each case, $x^*$ is a solution to \eqref{eq:bilevel_problem}, $t_k:=\theta(k + \gamma)$ for $\theta>0$ and $\gamma\geq 0$, and $\eta=1$ for Algorithm \ref{alg:first_order}, and $\eta=2$ for Algorithm \ref{alg:second_order}. As in the continuous-time Lyapunov analysis, we arrive at the following dissipativity estimate
\begin{equation}\label{eq: general inequality for E(k+1)-E(k), lemma, discrete}
	E_{k + 1} - E_{k} + \zeta_{k, \delta} (H(x_k) - H(x^*)) + g_{k} \leq -C_{2} k^{\eta - 1} (F(x_k)-F(x^*)), \quad \text{for} \ k \ \text{sufficiently large},
\end{equation}
where $g_{k}\geq 0$ and $\zeta_{k, \delta} \simeq k^{-\delta + \eta - 1}$. The precise form of $g_{k}$ and $\zeta_{k, \delta}$ is not important at this stage. As shown in Lemma \ref{lem: general descent lemma for discrete case}, this Lyapunov inequality underpins much of our analysis, enabling the derivation of key convergence results, including little-$o$ rates and weak convergence of the iterates.

\section{Bilevel Proximal-Gradient method}\label{sec:first_order}

Consider the first-order continuous-time dynamics in \eqref{eq:intro_tichonov_flow}. To handle the non-smoothness inherent in \eqref{eq:bilevel_problem}, we adopt a standard semi-implicit discretization of the monotone inclusion form of \eqref{eq:intro_tichonov_flow}, treating the non-smooth components implicitly and the smooth ones explicitly,\begin{equation}\label{eq:discretization_of_tichonov}
 \frac{x_{k + 1} - x_{k}}{\theta} + \partial \hat f(x_{k+1}) + \nabla f(x_{k}) + \varepsilon_k \partial \hat h(x_{k+1}) + \varepsilon_{k} \nabla h(x_{k}) \ni 0,  \quad \text{for} \ k\geq 0.
\end{equation}
Rearranging suitably and recalling that $\prox_{\tau f}=(I+ \tau \partial f)^{-1}$, we obtain Algorithm \ref{alg:first_order}. To study Algorithm \ref{alg:first_order}, we will closely follow the analysis of \eqref{eq:intro_tichonov_flow}, which is significantly simpler. Let us introduce the following notation:
\begin{equation}\label{eq:notation_first_order}
	\Phi_{k}(x) := f(x) + \varepsilon_{k} h(x), \quad \hat{\Phi}_{k}(x) := \hat{f}(x) + \varepsilon_{k} \hat{h}(x), \quad \Psi_{k}(x) := \Phi_{k}(x) + \hat{\Phi}_{k}(x), \quad \text{for} \ k\geq 0.
\end{equation}
Observe that Algorithm \ref{alg:first_order} can be understood as a standard proximal-gradient algorithm for minimizing the sum of the non-smooth function $\hat \Phi_k$ and the smooth function $\Phi_k$. Therefore, the reader can expect that the step size $\theta$ will be picked to satisfy $\theta \in (0, 2/L_k)$, where $L_k:=L_{\nabla f} + \varepsilon_k L_{\nabla h}$. Since $\varepsilon_k\to 0$, any choice $\theta \in (0, 2/L_{\nabla f})$ will suffice to show the convergence of the algorithm.

\begin{algorithm}[t]
    \DontPrintSemicolon
    \KwIn{$x_{0} \in\mathcal{H}$, $0 < \theta < \frac{2}{L_{\nabla 
 f}}$ \hfill \textcolor{gray}{$\rhd$ $\varepsilon_k$ as in \eqref{eq:def_epsilon}} }
        \For{$k = 1, 2, \ldots$}{
            $x_{k + 1} := \prox_{\theta (\hat f + \varepsilon_k \hat h)} \big( x_{k} - \theta (\nabla f(x_{k}) + \varepsilon_k \nabla h(x_k) ) \big)$\;
        }
    \caption{Bilevel Proximal-Gradient method\label{Bilevel Proximal Gradient algorithm}}
    \label{alg:first_order}
\end{algorithm}

For analyzing the asymptotic behavior of Algorithm \ref{alg:first_order}, we make use of the following discrete Lyapunov energy function 
\begin{equation}\label{eq:lyapunov_first_order}
    E_{k}^{\lambda} := t_{k} (\Psi_{k - 1} (x_{k}) - \Psi_{k - 1}(x^{*})) + \frac{\lambda}{2} \| x_{k} - x^{*}\|^{2}, \quad \text{for} \ k \geq 1,
\end{equation}
where $t_k := \theta(k + \gamma)$, with $\gamma \geq 0$, represents the discrete time, $x^*$ is a solution to \eqref{eq:bilevel_problem}, and $\lambda > 1$. We thus obtain the following statement.
\begin{lem}\label{lem:dissipation_first_order}
	Let $(x_k)_{k \geq 0}$ be the sequence generated by Algorithm \ref{alg:first_order} and $(E_k^\lambda)_{k \geq 1}$ be defined as in \eqref{eq:lyapunov_first_order}. Then, there exists $k_0 \geq 1$ such that for all $k \geq k_0$
	\begin{equation}\label{eq:dissipation_first_order}
		E_{k + 1}^{\lambda} - E_{k}^{\lambda} + \zeta_{k, \delta} (H(x_{k}) - H(x^{*})) \leq - \theta (\lambda - 1) (F(x_{k}) - F(x^{*})), 
	\end{equation}
	where $(\zeta_{k, \delta})_{k \geq 1}$ is a sequence defined by
	\[
	\zeta_{k, \delta} :=  \theta(\lambda - 1)\varepsilon_{k} -  t_{k} (\varepsilon_{k} - \varepsilon_{k - 1}).
	\]
\end{lem}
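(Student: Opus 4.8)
The plan is to read off the inequality from the one-step descent estimate for the proximal-gradient map, following the Lyapunov computation for the flow \eqref{eq:intro_tichonov_flow} that guides the whole section. Since $x_{k+1}$ is a proximal-gradient step on $\Psi_k = \Phi_k + \hat\Phi_k$, where $\Phi_k = f + \varepsilon_k h$ is $L_k$-smooth with $L_k := L_{\nabla f} + \varepsilon_k L_{\nabla h}$, I would first record the standard descent inequality, valid for \emph{every} test point $y \in \cH$,
\[
\Psi_k(x_{k+1}) \leq \Psi_k(y) + \tfrac{1}{2\theta}\|x_k - y\|^2 - \tfrac{1}{2\theta}\|x_{k+1}-y\|^2 + \Bigl(\tfrac{L_k}{2}-\tfrac{1}{2\theta}\Bigr)\|x_{k+1}-x_k\|^2,
\]
obtained from the prox optimality condition, convexity of $\Phi_k$ and $\hat\Phi_k$, the descent lemma for $\Phi_k$, and the three-point identity. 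I would use it once at $y=x_k$, giving the descent estimate $\Psi_k(x_{k+1}) - \Psi_k(x_k) \leq (\tfrac{L_k}{2}-\tfrac1\theta)\|x_{k+1}-x_k\|^2$, and once at $y=x^*$, giving the distance-decrease estimate.

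The algebraic core is then pure bookkeeping. Writing $D_k := \tfrac12\|x_k-x^*\|^2$ and using the structural identity $\Psi_k(x) - \Psi_{k-1}(x) = (\varepsilon_k-\varepsilon_{k-1})H(x)$ (valid since $\Psi_k = F + \varepsilon_k H$), I would expand $E_{k+1}^\lambda - E_k^\lambda = t_{k+1}(\Psi_k(x_{k+1}) - \Psi_k(x^*)) - t_k(\Psi_{k-1}(x_k) - \Psi_{k-1}(x^*)) + \lambda(D_{k+1}-D_k)$ and substitute $t_{k+1} = t_k + \theta$. The index shift from $\Psi_{k-1}$ to $\Psi_k$ produces exactly the term $t_k(\varepsilon_k-\varepsilon_{k-1})(H(x_k)-H(x^*))$, which combined with the $\theta(\lambda-1)\varepsilon_k$ contribution and the splitting $\Psi_k(x_k)-\Psi_k(x^*) = (F(x_k)-F(x^*)) + \varepsilon_k(H(x_k)-H(x^*))$ yields precisely the announced coefficient $\zeta_{k,\delta}$. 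After this reorganization the claim becomes equivalent to the single ``dissipation at the current iterate'' inequality
\[
t_{k+1}\bigl(\Psi_k(x_{k+1}) - \Psi_k(x_k)\bigr) + \lambda(D_{k+1}-D_k) + \theta\lambda\bigl(\Psi_k(x_k)-\Psi_k(x^*)\bigr) \leq 0.
\]

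To prove this reduced inequality I would bound $(D_{k+1}-D_k) + \theta(\Psi_k(x_k)-\Psi_k(x^*))$ by means of the $y=x^*$ descent estimate, which rewrites it as $-\theta(\Psi_k(x_{k+1})-\Psi_k(x_k))$ up to a multiple of $\|x_{k+1}-x_k\|^2$; substituting back leaves a single term $(t_{k+1}-\lambda\theta)(\Psi_k(x_{k+1})-\Psi_k(x_k))$ plus quadratic remainders. Bounding the factor $\Psi_k(x_{k+1})-\Psi_k(x_k)$ through the $y=x_k$ estimate, everything collapses to $C_k\|x_{k+1}-x_k\|^2$ with $C_k = (t_{k+1}-\lambda\theta)(\tfrac{L_k}{2}-\tfrac1\theta) + \lambda\theta(\tfrac{L_k}{2}-\tfrac1{2\theta})$. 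Existence of $k_0$ then follows from the asymptotics: since $\varepsilon_k\to0$ we have $L_k\to L_{\nabla f}$, so $\tfrac{L_k}{2}-\tfrac1\theta \to \tfrac{L_{\nabla f}}{2}-\tfrac1\theta < 0$ by the step-size restriction $\theta < 2/L_{\nabla f}$, while $t_{k+1}-\lambda\theta\to+\infty$; hence $C_k\to-\infty$, and in particular $C_k\leq0$ for all $k\geq k_0$, which forces $C_k\|x_{k+1}-x_k\|^2\leq0$ and closes the argument.

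I expect the main obstacle to be the second paragraph: keeping the index shift between $\Psi_{k-1}$ and $\Psi_k$ and the two regularization levels $\varepsilon_{k-1},\varepsilon_k$ fully honest, so that the \emph{exact} coefficient $\zeta_{k,\delta} = \theta(\lambda-1)\varepsilon_k - t_k(\varepsilon_k-\varepsilon_{k-1})$ appears rather than an asymptotic surrogate, and simultaneously ensuring the leftover quadratic terms collect with the right sign. The fact that $L_k$ genuinely depends on $k$ is precisely what prevents a uniform bound and forces the passage to $k\geq k_0$ rather than $k\geq1$; the residual-coefficient analysis for $C_k$ is where the step-size condition $\theta<2/L_{\nabla f}$ and the vanishing of $\varepsilon_k$ enter in an essential way.
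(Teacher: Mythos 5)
Your proposal is correct and takes essentially the same route as the paper: your three-point descent inequality is exactly the paper's Lemma \ref{lem: proximal step} in disguise (the gradient-mapping form rewritten via the three-point identity), you apply it at the same two test points with the same weights $t_{k+1}-\lambda\theta$ and $\lambda\theta$, and your residual coefficient $C_k$ is algebraically identical to the paper's $-\bigl[\tfrac{2-\theta L_k}{2\theta}t_{k+1}-\tfrac{\lambda}{2}\bigr]$, dropped for large $k$ by the same asymptotic argument. The only presentational caveat is that using the $y=x_k$ estimate to bound $(t_{k+1}-\lambda\theta)(\Psi_k(x_{k+1})-\Psi_k(x_k))$ already requires $t_{k+1}-\lambda\theta\geq 0$, a condition the paper states explicitly when choosing $k_0$ and which your choice of $k_0$ covers implicitly.
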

\begin{proof}
	Let $k \geq 1$. We have 
	\begin{align}
		E_{k + 1}^{\lambda} - E_{k}^{\lambda} = &\: t_{k + 1} (\Psi_{k}(x_{k + 1}) - \Psi_{k}(x^{*})) - t_{k} (\Psi_{k}(x_{k}) - \Psi_{k}(x^{*})) \label{eq: E(k+1)-E(k), Bilevel ProxGradient, line 1}\\
		&+ t_{k} (\Psi_{k} (x_{k}) - \Psi_{k}(x^{*})) - t_{k} (\Psi_{k - 1}(x_{k}) - \Psi_{k - 1}(x^{*})) \label{eq: E(k+1)-E(k), Bilevel ProxGradient, line 2}\\
		&+ \frac{\lambda}{2} \| x_{k + 1} - x^{*}\|^{2} - \frac{\lambda}{2} \| x_{k} - x^{*}\|^{2}. \label{eq: E(k+1)-E(k), Bilevel ProxGradient, line 3}
	\end{align}
	We work with each line separately. For line \eqref{eq: E(k+1)-E(k), Bilevel ProxGradient, line 1}, we use Lemma \ref{lem: proximal step}. The proximal step in Algorithm \ref{Bilevel Proximal Gradient algorithm} can be written as 
	\begin{equation}\label{eq: x(k+1) expressed in terms of G(xk), Bilevel ProxGradient}
		x_{k + 1} = x_{k} - \theta G_{k} (x_{k}), \quad \text{where} \quad G_{k}(y) := \frac{1}{\theta} \Bigl[ y - \prox_{\theta \hat{\Phi}_{k}} (y - \theta \nabla \Phi_{k}(y))\Bigr].
	\end{equation}
	Since $\Phi_{k}$ is $L_{k}$-smooth, according to Lemma \ref{lem: proximal step}, for all $x, y\in\mathcal{H}$ it holds
	\begin{equation*}
		\Psi_{k}(y - \theta G_{k}(y))\leq  \Psi_{k}(x) - \frac{\theta}{2} (2 - \theta L_{k}) \| G_{k}(y)\|^{2} + \langle G_{k}(y), y - x\rangle. 
	\end{equation*}
	We use this inequality twice. First, set $y = x := x_{k}$. We obtain
	\begin{equation}\label{eq: E(k+1)-E(k), Bilevel ProxGradient, line 1a}
		\Psi_{k}(x_{k + 1}) \leq \Psi_{k}(x_{k}) - \frac{\theta}{2} (2 - \theta L_{k}) \| G_{k}(x_{k})\|^{2}. 
	\end{equation}
	Now, set $y := x_{k}$ and $x := x^{*}$. This yields 
	\begin{equation}\label{eq: E(k+1)-E(k), Bilevel ProxGradient, line 1b}
		\Psi_{k}(x_{k + 1}) \leq \Psi_{k}(x^{*}) - \frac{\theta}{2}(2 - \theta L_{k}) \| G_{k} (x_{k})\|^{2} + \langle G_{k}(x_{k}), x_{k} - x^{*}\rangle.
	\end{equation}
	For $k_0$ large enough and $k\geq k_0$, we multiply \eqref{eq: E(k+1)-E(k), Bilevel ProxGradient, line 1a} by $t_{k + 1} - \lambda \theta \geq 0$ and \eqref{eq: E(k+1)-E(k), Bilevel ProxGradient, line 1b} by $\lambda \theta \geq 0$, add the resulting inequalities, sum $ - t_{k + 1} \Psi_{k} (x^{*})$ to both sides and use that $t_{k+1}=t_k + \theta$ to obtain
	\begin{align}
		t_{k + 1} (\Psi_{k} (x_{k + 1}) - \Psi_{k}(x^{*})) \nonumber &\leq t_{k}(\Psi_{k}(x_{k}) - \Psi_{k}(x^{*})) - \theta(\lambda - 1)(\Psi_{k}(x_{k}) - \Psi_{k}(x^{*}))\\
		&\quad - \frac{\theta t_{k + 1}}{2} (2 - \theta L_{k}) \| G_{k}(x_{k})\|^{2} + \lambda \theta \langle G_{k}(x_{k}), x_{k} - x^{*}\rangle. \label{eq: E(k+1)-E(k), Bilevel ProxGradient, line 1c}
	\end{align}
	According to \eqref{eq: x(k+1) expressed in terms of G(xk), Bilevel ProxGradient}, we have 
	\begin{equation}\label{eq:using_constitute_equation_first_order}
		\begin{aligned}
			&\lambda \theta \langle G_{k}(x_{k}), x_{k} - x^{*}\rangle = - \lambda \langle x_{k + 1} - x_{k}, x_{k + 1} - x^{*}\rangle + \lambda \| x_{k + 1} - x_{k}\|^{2},\\
			&\| G_{k}(x_{k})\|^{2} = \frac{1}{\theta^{2}} \| x_{k + 1} - x_{k}\|^{2}.
		\end{aligned}
	\end{equation}
	Now, for line \eqref{eq: E(k+1)-E(k), Bilevel ProxGradient, line 2}, we write
	\begin{align}
		&\: t_{k} (\Psi_{k}(x_{k}) - \Psi_{k}(x^{*})) - t_{k} (\Psi_{k - 1}(x_{k}) - \Psi_{k - 1}(x^{*})) \nonumber\\
		= &\: t_{k} \Big[ \Psi_{k}(x_{k}) - \Psi_{k - 1}(x_{k}) - (\Psi_{k}(x^{*}) - \Psi_{k - 1}(x^{*}))\Big]         =   t_{k}( \varepsilon_{k} - \varepsilon_{k - 1}) (H(x_{k}) - H(x^{*})), \label{eq: E(k+1)-E(k), Bilevel ProxGradient, line 2a}
	\end{align}
	and for line \eqref{eq: E(k+1)-E(k), Bilevel ProxGradient, line 3} we have 
	\begin{equation}\label{eq: E(k+1)-E(k), Bilevel ProxGradient, line 3a}
		\frac{\lambda}{2} \| x_{k + 1} - x^{*}\|^{2} - \frac{\lambda}{2} \| x_{k} - x^{*}\|^{2} = \lambda \langle x_{k + 1} - x_{k}, x_{k + 1} - x^{*}\rangle - \frac{\lambda}{2} \| x_{k + 1} - x_{k}\|^{2}.
	\end{equation}
	Putting \eqref{eq: E(k+1)-E(k), Bilevel ProxGradient, line 1c}, \eqref{eq:using_constitute_equation_first_order}, \eqref{eq: E(k+1)-E(k), Bilevel ProxGradient, line 2a} and \eqref{eq: E(k+1)-E(k), Bilevel ProxGradient, line 3a} together yields for all $k \geq k_0$
	\begin{align*}
		E_{k + 1}^{\lambda} - E_{k}^{\lambda}  \leq &\: - \theta (\lambda - 1) (\Psi_{k}(x_{k}) - \Psi_{k}(x^{*})) - \frac{\theta t_{k+1}}{2} (2 - \theta L_{k}) \frac{\| x_{k + 1} - x_{k}\|^{2}}{\theta^{2}} \\
		& - \lambda\langle x_{k + 1} - x_{k}, x_{k + 1} - x^{*}\rangle + \lambda \| x_{k + 1} - x_{k}\|^{2} \\
		& + t_{k} (\varepsilon_{k} - \varepsilon_{k - 1}) (H(x_{k}) - H(x^{*})) + \lambda \langle x_{k + 1} - x_{k}, x_{k + 1} - x^{*}\rangle - \frac{\lambda}{2} \| x_{k + 1} - x_{k}\|^{2} \\
		= &\: -\theta(\lambda - 1) (F(x_{k}) - F(x^{*})) + \Bigl[ t_{k} (\varepsilon_{k} - \varepsilon_{k - 1}) - \theta (\lambda - 1)\varepsilon_{k}\Bigr] (H(x_{k}) - H(x^{*}))\\
		&- \left[\frac{2 - \theta L_{k}}{2 \theta} t_{k+1} - \frac{\lambda}{2}\right] \| x_{k + 1} - x_{k}\|^{2}.
	\end{align*}
	The fact that $L_{k} \to L_{f}$ as $k\to +\infty$ ensures that the term with $\| x_{k + 1} - x_{k}\|^{2}$ is eventually negative, so we drop it to obtain \eqref{eq:dissipation_first_order}, after possibly increasing $k_0$.
\end{proof}

\begin{rmk}\label{rem:continuous_time_first_order}
The proof of Lemma \ref{lem:dissipation_first_order} is largely inspired by the continuous-time analysis of \eqref{eq:intro_tichonov_flow}, which centers around the Lyapunov energy function
	\begin{equation}\label{eq:lypunov_first_order_continuous}
		E^\lambda(t):=t(\Psi_t(x(t))-\Psi_t(x^*)) + \frac{\lambda}{2}\|x(t)-x^*\|^2, \quad \text{for} \ t \geq t_0,
	\end{equation}
	where $\Psi_{t}(x):=f(x) + \varepsilon(t) h(x)$ is the regularized function \eqref{eq:regularized} with $\varepsilon = \varepsilon(t)$. A similar inequality to \eqref{eq:dissipation_first_order} can easily be obtained for \eqref{eq:lypunov_first_order_continuous}, see Appendix \ref{sec:continuous_time}.
\end{rmk}
\begin{thm}\label{thm: first order algorithm}
    Let $(x_{k})_{k \geq 0}$ be the sequence generated by Algorithm \ref{Bilevel Proximal Gradient algorithm}. Then, the following statements are true:
    \begin{itemize}
        \item Case $\delta > 1$: It holds $F(x_{k}) - \min F = \mathcal{O}(k^{-1})$ as $k \to +\infty$.
        
\item[] \hspace*{-\leftmargin} Suppose Assumption \ref{ass:qualification} holds.

        \item Case $\delta = 1$: If $F$ satisfies Assumption \ref{ass:attouch_czarnecki} with $\eta = 1$, it holds 
        \[
            F(x_{k}) - \min F = o(k^{-1}) \quad \text{and} \quad H(x_{k}) \to \min\nolimits_{\argmin F} H \quad \text{as} \ k\to +\infty.
        \]
        Furthermore, $(x_{k})_{k \geq 0}$ converges weakly towards a solution to the bilevel problem \eqref{eq:bilevel_problem}.
        \item Case $\frac{1}{\rho^\star} < \delta < 1$: If $F$ satisfies Assumption \ref{ass:holderian_growth} with exponent $\rho$, it holds
        \[
            F(x_{k}) - F(x^{*}) = o(k^{-1}) \quad \text{and} \quad |H(x_{k}) - \min\nolimits_{\argmin F} H | = o(k^{-1 + \delta}) \quad \text{as} \ k\to +\infty. 
        \]
        Furthermore, $(x_{k})_{k \geq 0}$ converges weakly to a solution to the bilevel problem \eqref{eq:bilevel_problem}.
    \end{itemize}
\end{thm}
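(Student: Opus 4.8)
The plan is to derive all three cases from the single dissipation inequality of Lemma \ref{lem:dissipation_first_order}, using that $\zeta_{k,\delta} > 0$ for large $k$ with $\zeta_{k,\delta}\simeq k^{-\delta}$, $t_k\simeq k$, and $\zeta_{k,\delta}/\varepsilon_k\to\theta(\lambda-1+\delta)$. Telescoping \eqref{eq:dissipation_first_order} from some $k_0$ gives
\[
E_{N+1}^{\lambda} + \theta(\lambda-1)\sum_{k=k_0}^{N}\big(F(x_k)-F(x^*)\big) \le E_{k_0}^{\lambda} - \sum_{k=k_0}^{N}\zeta_{k,\delta}\big(H(x_k)-H(x^*)\big),
\]
so everything hinges on controlling the sign-indefinite term $\zeta_{k,\delta}(H(x_k)-H(x^*))$. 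Once the right-hand side is bounded above and $E_{N+1}^\lambda$ is bounded below, I recover the $F$-rate by dividing $t_k(\Psi_{k-1}(x_k)-\Psi_{k-1}(x^*))\le E_k^\lambda$ by $t_k$ and peeling off the $\varepsilon_{k-1}H$-contribution.

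In the case $\delta>1$ I would use only the standing hypothesis $\inf_{x\in\cH}H(x)>-\infty$: with $M:=H(x^*)-\inf_{x\in\cH}H(x)<+\infty$ one has $H(x_k)-H(x^*)\ge -M$, hence $\zeta_{k,\delta}(H(x_k)-H(x^*))\ge -M\zeta_{k,\delta}$, and $\sum_k\zeta_{k,\delta}\simeq\sum_k k^{-\delta}<+\infty$ since $\delta>1$. This bounds the telescoped right-hand side and thus $E_{N+1}^\lambda$ from above; the matching lower bound $E_{N+1}^\lambda\ge -Mt_{N+1}\varepsilon_N+\tfrac{\lambda}{2}\|x_{N+1}-x^*\|^2$ with $t_{N+1}\varepsilon_N\simeq N^{1-\delta}\to0$ yields boundedness of $E_k^\lambda$ and of the iterates. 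Peeling off $\varepsilon_{k-1}(H(x_k)-H(x^*))=\mathcal{O}(\varepsilon_{k-1})$ then gives $F(x_k)-\min F=\mathcal{O}(t_k^{-1})+\mathcal{O}(\varepsilon_{k-1})=\mathcal{O}(k^{-1})$, with no qualification condition needed.

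For $\delta=1$ and $\tfrac{1}{\rho^\star}<\delta<1$ we have $\sum_k\zeta_{k,\delta}=+\infty$, so the crude bound fails and I would instead invoke Assumption \ref{ass:qualification}: choosing $p^*$ as in \eqref{eq: Attouch-Czarnercki implies the integrability condition of the lemma}, the residual $r_k:=(F-\min F)^*(\varepsilon_k p^*)-\sigma_{\argmin F}(\varepsilon_k p^*)\ge0$ yields the lower bound $\varepsilon_k(H(x_k)-H(x^*))\ge-(F(x_k)-F(x^*))-r_k$. This is exactly the delicate point and the \emph{main obstacle}: the naive attempt to absorb the $H$-term into the dissipation produces the coefficient $\zeta_{k,\delta}/\varepsilon_k\to\theta(\lambda-1+\delta)$, which \emph{overshoots} the available $\theta(\lambda-1)$ for every admissible $\lambda$, so a direct sign argument cannot work. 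The remedy is to substitute the energy identity $\varepsilon_{k-1}(H(x_k)-H(x^*))=t_k^{-1}\big(E_k^\lambda-\tfrac{\lambda}{2}\|x_k-x^*\|^2\big)-(F(x_k)-F(x^*))$ into \eqref{eq:dissipation_first_order}, turning it into a Grönwall-type recursion $E_{k+1}^\lambda\le(1-c_k/t_k)E_k^\lambda+(\text{inhomogeneity})$ with $c_k\to\lambda-1+\delta>0$ and inhomogeneity governed by $r_k$. The decisive input is $\sum_k r_k<+\infty$, which holds by the Attouch--Czarnecki condition (Assumption \ref{ass:attouch_czarnecki} with $\eta=1$) when $\delta=1$, and by $r_k\le C\varepsilon_k^{\rho^\star}\simeq k^{-\delta\rho^\star}$ from \eqref{eq: bound for conjugates} with $\delta\rho^\star>1$ when $\tfrac1{\rho^\star}<\delta<1$; this is precisely the trade-off of Figure \ref{fig:geometric_setting}. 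From the recursion I would deduce convergence of $E_k^\lambda$, boundedness of $\|x_k-x^*\|$, and weighted summability of $\big(F(x_k)-F(x^*)\big)_k$; the little-$o$ rate then follows from Lemma \ref{lem: general descent lemma for discrete case}, and reading the energy identity backwards together with the two-sided control of $H(x_k)-H(x^*)$ gives $|H(x_k)-\min\nolimits_{\argmin F}H|=o(k^{-1+\delta})$ (respectively convergence $H(x_k)\to\min\nolimits_{\argmin F}H$ at $\delta=1$).

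Finally, I would establish weak convergence via Opial's lemma. Repeating the Lyapunov computation with an arbitrary solution $z$ in place of $x^*$ yields quasi-Fej\'er monotonicity of $(\|x_k-z\|)_k$ up to summable errors, so $\lim_k\|x_k-z\|$ exists for every solution $z$ of \eqref{eq:bilevel_problem}. For any weak cluster point $\bar x$ of $(x_k)_{k\ge0}$, weak lower semicontinuity of $F$ and $F(x_k)\to\min F$ give $F(\bar x)\le\min F$, hence $\bar x\in\argmin F$; combining with $H(x_k)\to\min\nolimits_{\argmin F}H$ and weak lower semicontinuity of $H$ gives $H(\bar x)\le\min\nolimits_{\argmin F}H$, so $\bar x$ solves \eqref{eq:bilevel_problem}. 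Opial's lemma then delivers weak convergence of the whole sequence to a single solution.
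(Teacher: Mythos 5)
Your case $\delta>1$ is correct and is essentially the paper's argument (it is the first part of Lemma \ref{lem: general descent lemma for discrete case}), you diagnose the central obstacle correctly (the ratio $\zeta_{k,\delta}/\varepsilon_k\to\theta(\lambda-1+\delta)$ exceeding the dissipation budget $\theta(\lambda-1)$), and you identify the right summability inputs ($r_k$ summable under Assumption \ref{ass:attouch_czarnecki} for $\delta=1$, resp.\ via \eqref{eq: bound for conjugates} when $\delta\rho^\star>1$). The Opial step at the end is also fine as a plan.

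The gap is in your remedy for the cases $\delta\le 1$, which is where the whole theorem lives. Substituting the identity $\varepsilon_{k-1}(H(x_k)-H(x^*))=t_k^{-1}\bigl(E_k^\lambda-\tfrac{\lambda}{2}\|x_k-x^*\|^2\bigr)-(F(x_k)-F(x^*))$ into \eqref{eq:dissipation_first_order} cannot create new information, because that identity is nothing but the definition of $E_k^\lambda$ rearranged. Written out, your recursion is
\begin{equation*}
E_{k+1}^\lambda \le \Bigl(1-\tfrac{\zeta_{k,\delta}}{\varepsilon_{k-1}t_k}\Bigr)E_k^\lambda
+\tfrac{\zeta_{k,\delta}}{\varepsilon_{k-1}t_k}\cdot\tfrac{\lambda}{2}\|x_k-x^*\|^2
+\Bigl(\tfrac{\zeta_{k,\delta}}{\varepsilon_{k-1}}-\theta(\lambda-1)\Bigr)\bigl(F(x_k)-F(x^*)\bigr),
\end{equation*}
so the inhomogeneity is \emph{not} governed by $r_k$: it contains the distance term $\|x_k-x^*\|^2$ (itself part of the energy and not yet known to be bounded) and $F(x_k)-F(x^*)$ with an asymptotically positive coefficient $\theta\delta$, both entering with the unfavorable sign; undoing the substitution returns you verbatim to \eqref{eq:dissipation_first_order}. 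Hence you never obtain the decrease-up-to-summable-error on which everything downstream (convergence of $E_k^\lambda$, boundedness, summability of $F_k$) relies. What actually defuses the overshoot---and this is the paper's mechanism---is that the bound \eqref{eq: Attouch-Czarnercki implies the integrability condition of the lemma} is stable under scaling: since $N_{\argmin F}(x^*)$ is a cone, $rp^*\in N_{\argmin F}(x^*)$ for every $r>0$, so $F_k+r\varepsilon_k H_k\ge -I_k^r$ with $(I_k^r)_k$ summable \emph{for every} $r>0$ (Assumption \ref{ass:attouch_czarnecki} applied to $rp^*$ when $\delta=1$; the estimate \eqref{eq: bound for conjugates}, which merely scales by $r^{\rho^\star}$, when $\tfrac1{\rho^\star}<\delta<1$). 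One then sacrifices a fraction of the dissipation rather than trying to match coefficients: with $C_2=\theta(\lambda-1)$, inequality \eqref{eq:dissipation_first_order} gives $E_{k+1}^\lambda-E_k^\lambda+\tfrac{C_2}{2}F_k\le -\tfrac{C_2}{2}\bigl(F_k+\tfrac{2\zeta_{k,\delta}}{C_2}H_k\bigr)$, and the right-hand side is bounded by a summable sequence by applying the scaled condition with the (large, irrelevant) resulting ratio. This is precisely the ``for each $r>0$'' clause of condition \eqref{eq: Attouch-Czarnecki condition lemma, discrete} in Lemma \ref{lem: general descent lemma for discrete case}. Two further points are under-specified in your sketch: Lemma \ref{lem: general descent lemma for discrete case} yields only big-$\mathcal{O}$ rates and summability, not little-$o$; the little-$o$ rates need the two-parameter comparison $E_k^{\lambda_2}-E_k^{\lambda_1}$ (giving existence of $\lim_k\|x_k-x^*\|$ and hence of $\lim_k q_k$, where $q_k:=t_kF_k+t_k\varepsilon_{k-1}H_k$) combined with $\sum_k|q_k|/t_k<+\infty$, and this last step requires the \emph{two-sided} summability $\sum_k\varepsilon_{k-1}|H_k|<+\infty$, which is a nontrivial output of the lemma's proof and does not follow from the one-sided bound by $r_k$ that you work with.
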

\begin{proof}
Let $x^{*}$ be a solution to the bilevel problem \eqref{eq:bilevel_problem} and $\lambda >1$.  The first part of the proof is  a consequence of Lemma \ref{lem: general descent lemma for discrete case} with $\hat \delta := \delta$, $\eta := 1$, $H_{*} := H(x^{*}) - \inf_{x \in \cH} H(x)$, $C_{2} := \theta (\lambda - 1) >0$, and
$$F_{k} := F(x_{k}) - F(x^*),\quad H_{k} := H(x_{k}) - H(x^*), \quad g_{k} := 0, \quad V_{k} := \frac{\lambda}{2} \| x_{k} - x^{*}\|^{2}$$ for all $k \geq k_0$, where $k_0 \geq 1$ is given by Lemma \ref{lem:dissipation_first_order}.
 As $\varepsilon_k$ is picked as in \eqref{eq:def_epsilon}, $\zeta_{k, \delta} =  \theta(\lambda - 1)\varepsilon_{k} -  t_{k} (\varepsilon_{k} - \varepsilon_{k - 1})$ can be shown to satisfy \eqref{eq:general descent lemma for discrete case, assumption on theta}. Let $k \geq 1$. Indeed, according to the mean value theorem, we have, for some $\xi_k \in (k+\beta -1, k + \beta)$, that  
    \[
        \varepsilon_{k - 1} - \varepsilon_{k} = \frac{c}{(k + \beta - 1)^{\delta}} - \frac{c}{(k + \beta)^{\delta}} = \frac{c \delta}{\xi_{k}^{\delta + 1}},
    \]
    which gives, recalling that $t_k=\theta(k+\gamma)$,
    \[
        \frac{\theta(k + \gamma)}{(k + \beta)^{\delta + 1}} \leq t_{k}(\varepsilon_{k - 1} - \varepsilon_{k}) \leq \frac{\theta(k + \gamma)}{(k + \beta - 1)^{\delta + 1}}.
    \]
    Hence, by possibly increasing $k_0$, the positive constants $C_{1, \zeta}, C_{2, \zeta}$ required by \eqref{eq:general descent lemma for discrete case, assumption on theta} can be easily shown to exist.
    
   The case $\delta > 1$ follows directly from Lemma \ref{lem: general descent lemma for discrete case}. We treat the other two cases simultaneously. We first show that Assumption \ref{ass:attouch_czarnecki} implies condition \eqref{eq: Attouch-Czarnecki condition lemma, discrete}, and that Assumption \ref{ass:holderian_growth} implies conditions \eqref{eq: Hölderian error bound condition lemma, discrete} + \eqref{eq: Attouch-Czarnecki condition lemma, discrete} in Lemma \ref{lem: general descent lemma for discrete case}. Using \eqref{eq: Attouch-Czarnercki implies the integrability condition of the lemma}, we observe that Assumption \ref{ass:attouch_czarnecki} implies that condition \eqref{eq: Attouch-Czarnecki condition lemma, discrete} is satisfied for 
    \[
        I_{k}^{r} := (F - \min F)^{*} ( r \varepsilon_k p^* ) - \sigma_{\argmin F}( r \varepsilon_k p^* ) \quad \text{for all} \ k \geq 0,
    \]
    where $p^{*} \in -\partial H(x^{*})$ and $p^* \in N_{\argmin F}(x^*)$ is given by Assumption \ref{ass:qualification}. For all $r >0$, $rp^* \in N_{\argmin F}(x^*)$, and the sequence $(I_{k}^{r})_{k \geq 0}$ is indeed summable. Using Lemma \ref{lem: Hölderian error bound implies the second condition of the lemma}, Assumption \ref{ass:holderian_growth} implies condition \eqref{eq: Hölderian error bound condition lemma, discrete} for $C_{3} := \| p^{*}\| (\tau\rho^{-1})^{-\frac{1}{\rho}}$, and, using \eqref{eq: bound for conjugates}, also condition \eqref{eq: Attouch-Czarnecki condition lemma, discrete}. Lemma \ref{lem: general descent lemma for discrete case} ensures that, in both cases, the limit $\lim_{k\to +\infty} E_{k}^{\lambda}$ exists. Choosing $1 < \lambda_{1} < \lambda_{2}$, we have 
   \begin{equation}\label{eq:proof_thm_first_order_1}
	E_{k}^{\lambda_{2}} - E_{k}^{\lambda_{1}} = \frac{\lambda_{2} - \lambda_{1}}{2} \| x_{k} - x^{*}\|^{2} \quad \text{for all} \ k \geq 1, \quad \text{thus} \quad \lim_{k \to +\infty} \| x_{k} - x^{*}\|^{2} \quad \text{exists}.
   \end{equation}
Since $x^{*}$ was an arbitrary solution to the bilevel problem \eqref{eq:bilevel_problem}, we have verified the first condition of Opial's Lemma (see Lemma \ref{lem: discrete Opial}). It follows, by definition of $E_k^\lambda$, that
    \[
        \lim_{k\to +\infty} q_{k} := t_{k} (F(x_{k}) - F(x^{*})) + t_k \varepsilon_{k - 1} (H(x_{k}) - H(x^{*})) \quad \text{exists}.
    \]
At the same time, both for the cases $\delta = 1$ and $\frac{1}{\rho^\star} < \delta < 1$, according to the summability statements given by Lemma \ref{lem: general descent lemma for discrete case} we have
	\begin{equation}\label{eq:proof_thm_first_order_2}
	\sum_{k = 1}^{+\infty} \frac{|q_{k}|}{t_{k}} \leq \sum_{k = 1}^{+\infty} (F(x_{k}) - F(x^{*})) + \sum_{k = 1}^{+\infty} \varepsilon_{k - 1} | H(x_{k}) - H(x^{*})| < +\infty. 
	\end{equation}
    Since $\lim_{k\to +\infty} | q_{k}|$ exists as well, it must be the case that $\lim_{k\to +\infty} q_{k} = 0$.

We now establish the weak convergence of $(x_k)_{k \geq 0}$, followed by the derivation of the little-$o$ rates. From \eqref{eq:proof_thm_first_order_1} we get that $(x_k)_{k \geq 0}$ is bounded. Let $x^{\diamond}$ be a weak sequential cluster point of this sequence and $x_{k_j} \rightharpoonup x^{\diamond}$ as $j \to +\infty$. From the convergence rate $F(x_{k}) - F(x^{*}) = \mathcal{O}(k^{-1})$ as $k \to +\infty$ given by Lemma \ref{lem: general descent lemma for discrete case}, and the weak lower semicontinuity of $F$, we deduce that $x^{\diamond} \in \argmin F$. On the other hand,
    \begin{equation}\label{eq:proof_thm_first_order_3}
    	t_{k_j}\varepsilon_{k_{j} - 1} \big(H(x_{k_j}) - H(x^*)\big)\leq t_{k_j} (F(x_{k_j})- F(x^*)) + t_{k_j}\varepsilon_{k_{j} - 1} \big(H(x_{k_j}) - H(x^*)\big) = q_{k_j} \quad \text{for all} \ j \geq 0.
    \end{equation} 
    Thus, passing to the liminf, using that $(t_{k} \varepsilon_{k - 1})_{k \geq 1}$ is bounded away from zero, and the weak lower semicontinuity of $H$, we deduce that $H(x^{\diamond})\leq H(x^*)$. As $x^{\diamond} \in \argmin F$ and $x^*$ is an arbitrary solution to \eqref{eq:bilevel_problem}, $x^{\diamond}$ is also a solution to \eqref{eq:bilevel_problem}. Opial's Lemma (see Lemma \ref{lem: discrete Opial}) yields that the whole sequence $(x_k)_{k \geq 0}$ converges weakly to a solution to the bilevel problem \eqref{eq:bilevel_problem}. Let us denote this solution by $x^*$.
    
To derive the little-$o$ rates, we distinguish between the two cases. If $\delta = 1$, we use the subgradient inequality applied to $H$ with $-p^* \in \partial H(x^*)$. If $\frac{1}{\rho^{\star}} < \delta < 1$, we use the lower bound for $t_{k} \varepsilon_{k - 1}(H(x_{k}) - H(x^{*}))$ provided by Lemma \ref{lem: general descent lemma for discrete case}. For all $k \geq 1$, we have, respectively,
    \begin{align}
    	t_k (F(x_k) -F(x^*)) - t_k \varepsilon_{k - 1} \langle p^*, x_k - x^*\rangle &\leq t_k (F(x_k) -F(x^*)) + t_k \varepsilon_{k - 1} (H(x_k)-H(x^*)) = q_k,\label{eq:proof_thm_first_order_4_1} \\
        t_{k} (F(x_{k}) - F(x^{*})) - C_{H} k^{-\delta + \frac{1}{\rho^{\star}}} &\leq t_{k} (F(x_{k}) - F(x^{*})) + t_{k} \varepsilon_{k - 1} (H(x_{k}) - H(x^{*})) = q_{k}.\label{eq:proof_thm_first_order_4_2}
    \end{align}
    When $\delta = 1$, we have $t_{k} \varepsilon_{k - 1} = \mathcal{O}(1)$ as $k\to +\infty$. Passing to the limit and using $x_k\rightharpoonup x^*$ or $k^{-\delta + \frac{1}{\rho^{\star}}}\to 0$, and $q_k\to0$ as $k\to +\infty$, we get $\lim_{k\to +\infty} t_k (F(x_k)-F(x^*)) = 0$. Consequently, since $q_k\to0$, also the limit of $t_k \varepsilon_{k - 1} (H(x_k)-H(x^*))$ exists and must be zero in both cases.  
\end{proof}

\section{Bilevel Fast Proximal-Gradient method}\label{sec:second_order}

In this section, we study the asymptotic behavior of a semi-implicit discretization of the monotone inclusion form of \eqref{eq:intro_su_boyd_candes_tichonov}. Following the same rationale of \eqref{eq:discretization_of_tichonov}, we consider for some $y_k \in \cH$, 
\begin{equation*}
    \frac{x_{k + 1} - 2x_{k} + x_{k - 1}}{\theta^{2}} + \frac{\alpha}{t_{k + 1}} \frac{x_{k} - x_{k - 1}}{\theta} + \partial \hat f(x_{k+1}) +  \nabla f(y_{k}) + \varepsilon_k \partial h (x_{k+1}) + \varepsilon_{k} \nabla h(y_{k}) \ni 0, \quad \text{for} \ k \geq 1.
\end{equation*}
After multiplication by $s:=\theta^{2}$, we come to 
\begin{equation*}
x_{k + 1} + s\big(\partial \hat f(x_{k+1}) + \varepsilon_k\partial \hat h(x_{k+1})\big) \ni  x_{k} + \left(1 - \frac{\alpha\theta}{t_{k + 1}}\right)(x_{k} - x_{k - 1}) - s \bigl(\nabla f(y_{k}) + \varepsilon_{k} \nabla h(y_{k})\bigr), \quad \text{for} \ k \geq 1,
\end{equation*}
which, setting $t_k:=\theta (k + \gamma)$, denoting $\alpha_k:=1-\frac{\alpha \theta}{t_{k+1}}$, and picking $y_k:=x_{k} + \alpha_{k} (x_{k} - x_{k - 1})$ leads us to Algorithm \ref{alg:second_order}. Employing the same notation as in the first-order setting \eqref{eq:notation_first_order}, we can interpret Algorithm \ref{alg:second_order} as the \emph{Fast Proximal-Gradient algorithm} (also known as FISTA), proposed by Beck and Teboulle in \cite{bt09} and built upon ideas of Nesterov \cite{nesterov83} and G{\"u}ler \cite{guler92}, applied to \eqref{eq:regularized} with $\varepsilon_k\to 0$. As the latter requires that $s\in (0, \frac{1}{L_k}]$ and $\varepsilon_k\to 0$, here we will pick $s \in (0, \frac{1}{L_{\nabla f}})$.

\begin{algorithm}[t]
    \DontPrintSemicolon
    \KwIn{$x_{0}, x_{1} \in\mathcal{H}$, $\alpha > 3$, $\gamma \geq 0$, $0 < s < \frac{1}{L_{f}}$ \hfill \textcolor{gray}{$\rhd$ $\varepsilon_k$ as in \eqref{eq:def_epsilon}}}
        \For{$k = 1, 2, \ldots$}{
            $\displaystyle\alpha_{k} := 1 - \frac{\alpha}{k + \gamma + 1}$\;
            $y_{k} := x_{k} + \alpha_{k}(x_{k} - x_{k - 1})$\;
            $x_{k + 1} := \prox_{s(\hat f + \varepsilon_k \hat h)} \big( y_{k} - s (\nabla f(y_{k}) + \varepsilon_k \nabla h(y_k) ) \big)$ \;
        }
    \caption{Bilevel Fast Proximal-Gradient method}
    \label{alg:second_order}
\end{algorithm}

For analyzing its asymptotic behavior, we make use of the following energy function:
\begin{equation}\label{eq:lyapunov_second_order}
    E_{k}^{\lambda} := t_{k}^{2} (\Psi_{k - 1}(x_{k}) - \Psi_{k - 1}(x^{*})) + \frac{1}{2}\left\| \lambda(x_{k - 1} - x^{*}) + t_{k} \frac{x_{k} - x_{k - 1}}{\theta}\right\|^{2} + \frac{\lambda (\alpha - 1 - \lambda)}{2} \| x_{k - 1} - x^{*}\|^{2}, 
\end{equation}
for $k \geq 1$, where $x^{*}$ is a solution to \eqref{eq:bilevel_problem}, $\theta:=\sqrt{s}$, and $\lambda \in (2, \alpha - 1)$, which is nonempty since $\alpha > 3$. Following closely the Lyapunov analysis of its continuous-time counterpart \eqref{eq:intro_su_boyd_candes_tichonov}, we arrive at the following estimate.
\begin{lem}\label{lem:dissipation_second_order}
	Let $(x_k)_{k \geq 0}$ be the sequence generated by Algorithm \ref{alg:second_order} and $(E_k^\lambda)_{k \geq 1}$ be defined as in \eqref{eq:lyapunov_second_order}. Then, there exists $k_0\geq 1$ such that for all $k\geq k_0$
	\begin{equation}\label{eq:dissipation_second_order}
	\begin{aligned}
		E_{k + 1}^{\lambda} - E_{k}^{\lambda} + (\alpha - 1 - \lambda) \theta t_{k + 1} \alpha_{k} \left\| \frac{x_{k} - x_{k - 1}}{\theta}\right\|^{2} & + \zeta_{k, \delta} (H(x_{k}) - H(x^{*}))\\
		\leq &  -\theta^{2} (\lambda - 2) k  (F(x_{k}) - F(x^{*})),
	\end{aligned}
	\end{equation}
where $(\zeta_{k, \delta})_{k\geq 1}$ is a sequence defined by
	\[
	\zeta_{k, \delta} := \theta \big( (\lambda - 2)t_k + \theta(\lambda - 1)\big) \varepsilon_k - t_k^2 (\varepsilon_k - \varepsilon_{k-1}).
	\]
\end{lem}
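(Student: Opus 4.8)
The plan is to mirror the proof of Lemma \ref{lem:dissipation_first_order}, now carrying the inertial term, and to let the Lyapunov dissipation of the continuous-time system \eqref{eq:intro_su_boyd_candes_tichonov} dictate the bookkeeping. First I would rewrite the inertial proximal-gradient step through the gradient-mapping operator $G_k(y):=\tfrac1s\big[y-\prox_{s\hat\Phi_k}(y-s\nabla\Phi_k(y))\big]$, so that $x_{k+1}=y_k-sG_k(y_k)$ and $s\|G_k(y_k)\|^2=\tfrac1s\|x_{k+1}-y_k\|^2$, and record the momentum identities $y_k-x_k=\alpha_k(x_k-x_{k-1})$ and $t_{k+1}\alpha_k=t_{k+1}-\alpha\theta$, using the notation $\Phi_k,\hat\Phi_k,\Psi_k$ of \eqref{eq:notation_first_order}. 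Since $\Phi_k$ is $L_k$-smooth with $L_k=L_{\nabla f}+\varepsilon_kL_{\nabla h}\to L_{\nabla f}$ and $s\in(0,1/L_{\nabla f})$, for $k$ large enough $s\le 1/L_k$, so Lemma \ref{lem: proximal step} applies at the extrapolated point $y=y_k$, giving
\[
\Psi_k(x_{k+1})\le\Psi_k(x_k)-\tfrac{s}{2}(2-sL_k)\|G_k(y_k)\|^2+\langle G_k(y_k),\,y_k-x_k\rangle,
\]
\[
\Psi_k(x_{k+1})\le\Psi_k(x^*)-\tfrac{s}{2}(2-sL_k)\|G_k(y_k)\|^2+\langle G_k(y_k),\,y_k-x^*\rangle.
\]

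As in the first-order case, I would take a convex-combination weighting of these two inequalities, scaled by $t_{k+1}$: weight $t_{k+1}(t_{k+1}-\lambda\theta)\ge0$ on the first and $\lambda\theta t_{k+1}\ge0$ on the second (both nonnegative for $k$ large, and summing to $t_{k+1}^2$). Adding $-t_{k+1}^2\Psi_k(x^*)$ to both sides controls the potential piece $t_{k+1}^2(\Psi_k(x_{k+1})-\Psi_k(x^*))-t_k^2(\Psi_k(x_k)-\Psi_k(x^*))$ of the decomposition of $E_{k+1}^\lambda-E_k^\lambda$. Using $t_{k+1}=t_k+\theta$, the coefficient of $(\Psi_k(x_k)-\Psi_k(x^*))$ emerges as $-\theta\big[(\lambda-2)t_k+\theta(\lambda-1)\big]$, which upon splitting $\Psi_k=F+\varepsilon_k H$ yields both the driving term in $(F(x_k)-F(x^*))$ and the $\varepsilon_k$-part of $\zeta_{k,\delta}$.

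Next I would decompose $E_{k+1}^\lambda-E_k^\lambda$ exactly as in Lemma \ref{lem:dissipation_first_order}: the potential piece above; the regularization-shift piece $t_k^2(\Psi_k(x_k)-\Psi_k(x^*))-t_k^2(\Psi_{k-1}(x_k)-\Psi_{k-1}(x^*))=t_k^2(\varepsilon_k-\varepsilon_{k-1})(H(x_k)-H(x^*))$, supplying the $-t_k^2(\varepsilon_k-\varepsilon_{k-1})$ contribution to $\zeta_{k,\delta}$; and the increment of the two quadratic terms $\tfrac12\|\lambda(x_{k-1}-x^*)+t_k\tfrac{x_k-x_{k-1}}{\theta}\|^2$ and $\tfrac{\lambda(\alpha-1-\lambda)}{2}\|x_{k-1}-x^*\|^2$. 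Writing the discrete velocity $v_k:=(x_k-x_{k-1})/\theta$ and substituting $x_{k+1}-x_k=-sG_k(y_k)+\alpha_k(x_k-x_{k-1})$, I would expand the kinetic square and combine its cross terms with the weighted inner product $\lambda\theta t_{k+1}\langle G_k(y_k),y_k-x^*\rangle$, using $y_k-x^*=(x_k-x^*)+\alpha_k(x_k-x_{k-1})$, so that the $x^*$-dependent cross terms cancel.

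The heart of the argument is this final collection. After all substitutions, the coefficient of $(F(x_k)-F(x^*))$ is $-\theta\big[(\lambda-2)t_k+\theta(\lambda-1)\big]$, which, since $F(x_k)\ge\min F=F(x^*)$ and $\lambda>2$, is bounded above by $-\theta^2(\lambda-2)k$; the coefficient of $(H(x_k)-H(x^*))$ combines the regularization shift with the $\varepsilon_k$-weight to produce exactly $\zeta_{k,\delta}=\theta\big((\lambda-2)t_k+\theta(\lambda-1)\big)\varepsilon_k-t_k^2(\varepsilon_k-\varepsilon_{k-1})$; and the friction term $(\alpha-1-\lambda)\theta t_{k+1}\alpha_k\|v_k\|^2$ appears with positive sign because $\lambda<\alpha-1$. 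The leftover terms — a multiple of $\|G_k(y_k)\|^2=\tfrac1{s^2}\|x_{k+1}-y_k\|^2$ and the residual quadratic pieces — must be shown nonnegative for $k$ large, using $2-sL_k\to2-sL_{\nabla f}>1$ and $\lambda>2$, and are then dropped. I expect this bookkeeping to be the main obstacle: in particular, completing the square so that the kinetic cross terms exactly cancel $\langle G_k(y_k),y_k-x^*\rangle$ and leave the clean friction coefficient $t_{k+1}\alpha_k=t_{k+1}-\alpha\theta$, just as the continuous-time computation of $\tfrac{d}{dt}E(t)$ hinges on the analogous cancellation. The constraints $\lambda\in(2,\alpha-1)$ and $s<1/L_{\nabla f}$ are precisely what render all discarded terms nonnegative and make $\zeta_{k,\delta}$ come out as stated.
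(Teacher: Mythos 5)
Your proposal is correct and follows essentially the same route as the paper's proof: the same four-part decomposition of $E_{k+1}^{\lambda}-E_k^{\lambda}$, the same two applications of Lemma \ref{lem: proximal step} at $y_k$ (against $x_k$ and $x^*$) with weights $t_{k+1}(t_{k+1}-\lambda\theta)$ and $\lambda\theta t_{k+1}$, the same identity $t_{k+1}^2-\lambda\theta t_{k+1}-t_k^2=-\theta\bigl[(\lambda-2)t_k+\theta(\lambda-1)\bigr]$, and the same final step of dropping the eventually nonpositive $\|G_k(y_k)\|^2$ term (via $sL_k\to sL_{\nabla f}<1$) and bounding $t_k\geq\theta k$. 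The only cosmetic difference is that the paper organizes the kinetic-term expansion via $\tfrac12\|v_{k+1}\|^2-\tfrac12\|v_k\|^2=\langle v_{k+1}-v_k,v_{k+1}\rangle-\tfrac12\|v_{k+1}-v_k\|^2$ rather than your direct substitution, which amounts to the same bookkeeping.
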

\begin{proof}
	 Let $k \geq 1$. First, let us denote for brevity $v_{k} := \lambda(x_{k - 1} - x^{*}) + t_{k} \frac{x_{k} - x_{k - 1}}{\theta}$. We have 
	\begin{align}
		E_{k + 1}^{\lambda} - E_{k}^{\lambda} = &\: t_{k + 1}^{2} (\Psi_{k}(x_{k + 1}) - \Psi_{k}(x^{*})) - t_{k}^{2} (\Psi_{k}(x_{k}) - \Psi_{k}(x^{*})) \label{eq: E_(k)' general line 1}\\
		&+ t_{k}^{2} (\Psi_{k}(x_{k}) - \Psi_{k}(x^{*})) - t_{k}^{2} (\Psi_{k - 1}(x_{k}) - \Psi_{k - 1}(x^{*})) \label{eq: E_(k)' general line 2}\\
		&+ \frac{1}{2} \| v_{k + 1}\|^{2} - \frac{1}{2} \| v_{k}\|^{2} \label{eq: E_(k)' general line 3}\\
		&+ \frac{\lambda(\alpha - 1 - \lambda)}{2} \| x_{k} - x^{*}\|^{2} - \frac{\lambda(\alpha - 1 - \lambda)}{2} \| x_{k - 1} - x^{*}\|^{2}. \label{eq: E_(k)' general line 4}
	\end{align}
We will work with each line separately. For line \eqref{eq: E_(k)' general line 1}, we use Lemma \ref{lem: proximal step}. The proximal step in Algorithm \ref{alg:second_order} can be written as
	\begin{equation}\label{eq: x(k+1) expressed in terms of G(yk), BiFISTA Algorithm}
		x_{k + 1} = y_{k} - sG_{k}(y_{k}), \quad \text{where} \quad G_{k}(y) := \frac{1}{s} \Bigl[ y - \prox_{s \hat{\Phi}_{k}} (y - s \nabla \Phi_{k}(y))\Bigr].
	\end{equation}
Since $\Phi_{k}$ is $L_{k}$-smooth, according to Lemma \ref{lem: proximal step}, for all $x, y\in\mathcal{H}$ it holds
	\begin{equation*}
		\Psi_{k}(y - sG_{k}(y)) \leq  \Psi_{k}(x) - \frac{s}{2} (2 - sL_{k}) \| G_{k}(y)\|^{2} + \langle G_{k}(y), y - x\rangle.
	\end{equation*}
We apply the inequality twice. First, set $y := y_{k}$ and $x := x_{k}$. We obtain
	\begin{equation}\label{eq: E_(k)', x=xk, y=xk}
		\Psi_{k}(x_{k + 1}) \leq  \Psi_{k}(x_{k}) - \frac{s}{2}(2 - sL_{k}) \| G_{k}(y_{k})\|^{2} + \langle G_{k}(y_{k}), y_{k} - x_{k}\rangle.
	\end{equation}
	Now, we set $y := y_{k}$ and $x := x^{*}$. This yields 
	\begin{equation} \label{eq: E_(k)', x=xk, y=x*}
		\Psi_{k}(x_{k + 1}) \leq \Psi_{k}(x^{*}) - \frac{s}{2}(2 - sL_{k}) \| G_{k}(y_{k})\|^{2} + \langle G_{k}(y_{k}), y_{k} - x^{*}\rangle.
	\end{equation}
From now on, we will use that $s = \theta^{2}$. For $k_0$ large enough and $k \geq k_0$, we multiply \eqref{eq: E_(k)', x=xk, y=xk} by $t_{k + 1} - \lambda\theta \geq 0$, and  \eqref{eq: E_(k)', x=xk, y=x*} by $\lambda\theta \geq 0$, add the resulting inequalities and sum $-t_{k + 1}\Psi_{k}(x^{*})$ to both sides to obtain
	\begin{align*}
		t_{k + 1} (&\Psi_{k}(x_{k + 1}) - \Psi_{k}(x^{*})) \leq (t_{k + 1} - \lambda\theta) (\Psi_{k}(x_{k}) - \Psi_{k}(x^{*}))\\
		&\quad  + \big\langle G_{k}(y_{k}), (t_{k + 1} - \lambda\theta)(y_{k} - x_{k}) + \lambda\theta (y_{k} - x^{*})\big\rangle - \frac{\theta^{2} t_{k + 1}}{2}(2 - \theta^{2}L_{k}) \|G_{k}(y_{k})\|^{2}.
	\end{align*}
We multiply both sides of the previous inequality by $t_{k + 1}$, and add and subtract $t_{k}^{2} (\Psi_{k}(x_{k}) - \Psi_{k}(x^{*}))$, which yields 
	\begin{align}
		t_{k + 1}^{2}&(\Psi_{k}(x_{k + 1}) - \Psi_{k}(x^{*})) \leq \Bigl[ t_{k + 1}^{2} - \lambda\theta t_{k + 1} - t_{k}^{2}\Bigr] (\Psi_{k}(x_{k}) - \Psi_{k}(x^{*})) + t_{k}^{2} (\Psi_{k}(x_{k}) - \Psi_{k}(x^{*})) \nonumber\\
		&+ t_{k + 1} \Bigl\langle G_{k}(y_{k}), (t_{k + 1} - \lambda\theta)(y_{k} - x_{k}) + \lambda\theta (y_{k} - x^{*})\Bigr\rangle - \frac{\theta^{2} t_{k + 1}^{2}}{2}(2 - \theta^{2} L_{k}) \| G_{k}(y_{k})\|^{2}. \label{eq: E_(k)' general line 1 bis}
	\end{align}
	Continuing with line \eqref{eq: E_(k)' general line 2}, we have 
	\begin{align}
		& \ t_{k}^{2}(\Psi_{k}(x_{k}) - \Psi_{k}(x^{*})) - t_{k}^{2} (\Psi_{k - 1}(x_{k}) - \Psi_{k - 1}(x^{*}))\\
		= & \ t_{k}^{2} \Bigl[\Psi_{k}(x_{k}) - \Psi_{k - 1}(x_{k}) - (\Psi_{k}(x^{*}) - \Psi_{k - 1}(x^{*}))\Bigr] \nonumber \\
		= & \ \: t_{k}^{2} (\varepsilon_{k} - \varepsilon_{k - 1}) (H(x_{k}) - H(x^{*})). \label{eq: E_(k)' general line 2 bis}
	\end{align}
	For line \eqref{eq: E_(k)' general line 3}, from \eqref{eq: x(k+1) expressed in terms of G(yk), BiFISTA Algorithm} first we notice that 
	\begin{align*}
		v_{k + 1} - v_{k} &= \lambda (x_{k} - x^{*}) + \frac{t_{k + 1}}{\theta}(x_{k + 1} - x_{k}) - \lambda (x_{k - 1} - x^{*}) - \frac{t_{k}}{\theta} (x_{k} - x_{k - 1}) \\
		&= \frac{t_{k + 1}}{\theta} (x_{k + 1} - x_{k}) - \left(\frac{t_{k}}{\theta} - \lambda\right) (x_{k} - x_{k - 1}) \\
		&= \frac{t_{k + 1}}{\theta} (x_{k + 1} - x_{k}) - \left[ \frac{t_{k + 1}\alpha_{k}}{\theta} + (\alpha - 1 - \lambda)\right](x_{k} - x_{k - 1}) \\
		&= \frac{t_{k + 1}}{\theta}\Bigl[ x_{k + 1} - \Bigl( x_{k} + \alpha_{k} (x_{k} - x_{k - 1})\Bigr)\Bigr] - (\alpha - 1 - \lambda) (x_{k} - x_{k - 1}) \\
		&= \frac{t_{k + 1}}{\theta} (x_{k + 1} - y_{k}) - (\alpha - 1 - \lambda)(x_{k} - x_{k - 1}) \\
		&= - \theta t_{k + 1} G_{k}(y_{k}) - (\alpha - 1 - \lambda) (x_{k} - x_{k - 1}).
	\end{align*}
	Therefore, 
	\begin{align}
		\langle v_{k + 1} - v_{k}, v_{k + 1}\rangle = & \left\langle -\theta t_{k + 1} G_{k}(y_{k}) - (\alpha - 1 - \lambda)(x_{k} - x_{k - 1}), \lambda(x_{k} - x^{*}) + t_{k + 1} \frac{x_{k + 1} - x_{k}}{\theta}\right\rangle \nonumber\\
		= & -\theta t_{k + 1} \left\langle G_{k}(y_{k}), \lambda(x_{k} - x^{*}) + t_{k + 1} \frac{x_{k + 1} - x_{k}}{\theta}\right\rangle \nonumber\\
		& -(\alpha - 1 - \lambda) \left\langle x_{k} - x_{k - 1}, \lambda(x_{k} - x^{*}) + t_{k + 1} \frac{x_{k + 1} - x_{k}}{\theta}\right\rangle. \label{eq: <v(k+1)-vk,v(k+1)> line 2}
	\end{align}
	From Algorithm \ref{alg:second_order} and \eqref{eq: x(k+1) expressed in terms of G(yk), BiFISTA Algorithm}, we know that 
	\[
	\frac{x_{k + 1} - x_{k}}{\theta} = -\theta G_{k}(y_{k}) + \alpha_{k} \frac{x_{k} - x_{k - 1}}{\theta}.
	\]
Using this equality, we can write line \eqref{eq: <v(k+1)-vk,v(k+1)> line 2} as 
	\begin{align*}
		& - ( \alpha - 1 - \lambda) \left\langle x_{k} - x_{k - 1}, \lambda (x_{k} - x^{*}) + t_{k + 1} \frac{x_{k + 1} - x_{k}}{\theta}\right\rangle \\
		= & - \lambda (\alpha - 1 - \lambda) \langle x_{k} - x_{k - 1}, x_{k} - x^{*}\rangle + (\alpha - 1 - \lambda) \theta t_{k + 1} \langle x_{k} - x_{k - 1}, G_{k}(y_{k})\rangle \\
		& - (\alpha - 1 - \lambda) \theta t_{k + 1} \alpha_{k} \left\| \frac{x_{k} - x_{k - 1}}{\theta}\right\|^{2}.
	\end{align*}
	We have 
	\begin{align*}
		& \|v_{k + 1} - v_{k}\|^{2} =\: \Bigl\| - \theta t_{k + 1} G_{k}(y_{k}) - (\alpha - 1 - \lambda) (x_{k} - x_{k - 1})\Bigr\|^{2} \\
		= & \ \theta^{2} t_{k + 1}^{2} \| G_{k}(y_{k})\|^{2} + 2\theta t_{k + 1} (\alpha - 1 - \lambda) \langle  G_{k}(y_{k}),  x_{k} - x_{k - 1}\rangle  + (\alpha - 1 - \lambda)^{2} \| x_{k} - x_{k - 1}\|^{2}. 
	\end{align*}
	Putting everything together gives 
	\begin{align}
		&\frac{1}{2} \| v_{k + 1}\|^{2} - \frac{1}{2} \| v_{k}\|^{2} = \langle v_{k + 1} - v_{k}, v_{k + 1}\rangle - \frac{1}{2} \| v_{k + 1} - v_{k}\|^{2} \nonumber\\
		= & -\theta t_{k + 1} \left\langle G_{k}(y_{k}), \lambda(x_{k} - x^{*}) + t_{k + 1} \frac{x_{k + 1} - x_{k}}{\theta}\right\rangle \nonumber\\
		&- \lambda(\alpha - 1 - \lambda) \langle x_{k} - x_{k - 1}, x_{k} - x^{*}\rangle + (\alpha - 1 - \lambda) \theta t_{k + 1} \langle G_{k}(y_{k}), x_{k} - x_{k - 1}\rangle \nonumber \\
		&- (\alpha - 1 - \lambda) \theta t_{k + 1} \alpha_{k} \left\| \frac{x_{k} - x_{k - 1}}{\theta}\right\|^{2} - \frac{\theta^{2} t_{k + 1}^{2}}{2} \| G_{k}(y_{k})\|^{2} \nonumber\\
		&- \theta t_{k + 1} (\alpha - 1 - \lambda) \langle G_{k}(y_{k}), x_{k} - x_{k - 1}\rangle - \frac{(\alpha - 1 - \lambda)^{2}}{2} \| x_{k} - x_{k - 1}\|^{2}. \label{eq: E_(k)' general line 3 bis}
	\end{align}
	For line \eqref{eq: E_(k)' general line 4}, it holds 
	\begin{align}
		& \ \frac{\lambda(\alpha - 1 - \lambda)}{2} \| x_{k} - x^{*}\|^{2} - \frac{\lambda(\alpha - 1 - \lambda)}{2} \| x_{k - 1} - x^{*}\|^{2} \nonumber \\
		= & \ \lambda(\alpha - 1 - \lambda) \langle x_{k} - x_{k - 1}, x_{k} - x^{*}\rangle - \frac{\lambda (\alpha - 1 - \lambda)}{2} \| x_{k} - x_{k - 1}\|^{2}. \label{eq: E_(k)' general line 4 bis}
	\end{align}
	We now combine \eqref{eq: E_(k)' general line 1 bis}, \eqref{eq: E_(k)' general line 2 bis}, \eqref{eq: E_(k)' general line 3 bis} and \eqref{eq: E_(k)' general line 4 bis}. We cancel out the corresponding terms, drop some nonpositive terms, again take into account that $y_{k} - x_{k + 1} = \theta^{2} G_{k}(y_{k})$ as per \eqref{eq: x(k+1) expressed in terms of G(yk), BiFISTA Algorithm}, and observe that, since $\lambda \geq 2$, simple algebraic computations yield
	\begin{equation}\label{eq: formula for t(k+1)^2-(lambda.theta)t(k+1)-tk^2}
		t_{k + 1}^{2} - \lambda\theta t_{k + 1} - t_{k}^{2} =\theta \Bigl[(2 - \lambda)t_k + \theta(1 - \lambda)\Bigl]  \leq 0 \quad \text{for all} \ k \geq 1.   
	\end{equation}
This leads us for all $k \geq k_0$ to
	\begin{align*}
		&\: E_{k + 1}^{\lambda} - E_{k}^{\lambda} \\
		\leq &\: \Bigl[ t_{k + 1}^{2} - \lambda\theta t_{k + 1} - t_{k}^{2}\Bigr] (\Psi_{k}(x_{k}) - \Psi_{k}(x^{*})) + t_{k + 1} \Bigl\langle G_{k}(y_{k}), (t_{k + 1} - \lambda\theta)(y_{k} - x_{k}) + \lambda\theta (y_{k} - x^{*})\Bigr\rangle  \\
		&- \frac{\theta^{2} t_{k + 1}^{2}}{2} (2 - \theta^{2} L_{k}) \| G_{k}(y_{k})\|^{2} + t_{k}^{2} (\varepsilon_{k} - \varepsilon_{k - 1}) (H(x_{k}) - H(x^{*}))\\
		&-\theta t_{k + 1} \left\langle G_{k}(y_{k}), \lambda(x_{k} - x^{*}) + t_{k + 1} \frac{x_{k + 1} - x_{k}}{\theta}\right\rangle - \lambda(\alpha - 1 - \lambda) \langle x_{k} - x_{k - 1}, x_{k} - x^{*}\rangle \\
		&+ (\alpha - 1 - \lambda) \theta t_{k + 1} \langle G_{k}(y_{k}), x_{k} - x_{k - 1}\rangle - (\alpha - 1 - \lambda) \theta t_{k + 1} \alpha_{k} \left\| \frac{x_{k} - x_{k - 1}}{\theta}\right\|^{2}  \\
		&- \frac{\theta^{2} t_{k + 1}^{2}}{2} \| G_{k}(y_{k})\|^{2} - \theta t_{k + 1} (\alpha - 1 - \lambda) \langle G_{k}(y_{k}), x_{k} - x_{k - 1}\rangle  - \frac{(\alpha - 1 - \lambda)^{2}}{2} \| x_{k} - x_{k - 1}\|^{2} \\
		&+ \lambda(\alpha - 1 - \lambda) \langle x_{k} - x_{k - 1}, x_{k} - x^{*}\rangle - \frac{\lambda(\alpha - 1 - \lambda)}{2} \| x_{k} - x_{k - 1}\|^{2}  \\
		\leq &\: \theta \Bigl[(2 - \lambda)t_k + \theta(1 - \lambda)\Bigl]  (\Psi_{k}(x_{k}) - \Psi_{k}(x^{*})) + t_{k + 1} \Bigl\langle G_{k}(y_{k}), t_{k + 1}(y_{k} - x_{k + 1})\Bigr\rangle \\
		&- \frac{\theta^{2} t_{k + 1}^{2}}{2} (3 - \theta^{2} L_{k}) \| G_{k}(y_{k})\|^{2} + t_{k}^{2} (\varepsilon_{k} - \varepsilon_{k - 1}) (H(x_{k}) - H(x^{*})) \\
		&- (\alpha - 1 - \lambda) \theta t_{k + 1} \alpha_{k} \left\| \frac{x_{k} - x_{k - 1}}{\theta}\right\|^{2} \\
		\leq &\: - \theta  (\lambda - 2) t_k (F(x_{k}) - F(x^{*})) - \frac{\theta^{2} t_{k + 1}^{2}}{2}(1 - \theta^{2}L_{k}) \| G_{k}(y_{k})\|^{2} - (\alpha - 1 - \lambda) \theta t_{k + 1} \alpha_{k} \left\| \frac{x_{k} - x_{k - 1}}{\theta}\right\|^{2} \\
		&+ \theta \Bigl[ \big( (2 - \lambda)t_k + \theta(1 - \lambda)\big) \varepsilon_k + t_k^2 \frac{\varepsilon_k - \varepsilon_{k-1}}{\theta}\Bigl] (H(x_{k}) - H(x^{*})). 
	\end{align*}
By dropping the term with $-(1 - \theta^{2} L_{k})$, which is eventually nonpositive, since $\theta^{2} < \frac{1}{L_{f}}$ and $L_{k} \to L_{f}$ as $k\to +\infty$, and by taking into account that $t_{k} = \theta(k + \gamma) \geq \theta k$, this yields \eqref{eq:dissipation_second_order}, after possibly increasing $k_0$.
\end{proof}

\begin{rmk}\label{rem:continuous_time_second_order}
	Even in this setting, the proof of Lemma \ref{lem:dissipation_second_order} is largely inspired from that of the continuous-time system \eqref{eq:intro_su_boyd_candes_tichonov}, which can be studied with the Lyapunov energy function
	\begin{equation}\label{eq:lyapunov_second_order_continuous}
		E^\lambda(t):=t^2(\Psi_t(x(t)) - \Psi_t(x^*)) + \frac{1}{2}\|\lambda (x(t)-x^*) + t \dot x(t)\|^2 + \frac{\lambda(\alpha-1-\lambda)}{2}\|x(t)-x^*\|^2, \ \text{for} \ t \geq t_0,
	\end{equation}
where we recall that $\Psi_t(x)=f(x) + \varepsilon(t)h(x)$ is the regularized function introduced in \eqref{eq:regularized}. Observe the striking similarity with its discrete-time counterpart \eqref{eq:lyapunov_second_order}. A similar result to Lemma \ref{lem:dissipation_second_order} can be established for \eqref{eq:lyapunov_second_order_continuous}, see Appendix \ref{sec:continuous_time}.
\end{rmk}

\begin{thm}\label{thm:second_order_main}
    Let $(x_{k})_{k \geq 0}$ be the sequence generated by Algorithm \ref{alg:second_order}. Then, the following statements are true:
    \begin{itemize}
        \item Case $\delta > 2$: It holds $F(x_k) - \min F = \mathcal{O} (k^{-2})$ as $k\to +\infty$.
\item[] \hspace*{-\leftmargin} Suppose Assumption \ref{ass:qualification} holds.
        \item Case $\delta = 2$: If $F$ satisfies Assumption \ref{ass:attouch_czarnecki} with $\eta = 2$, it holds
        \[
            F(x_{k}) - \min F = o(k^{-2}) \quad \text{and} \quad H(x_{k}) \to \min\nolimits_{\argmin F} H \quad \text{as} \ k\to +\infty.
        \]
        Furthermore, $(x_{k})_{k\geq 0}$ converges weakly towards a solution to the bilevel problem \eqref{eq:bilevel_problem}.
        
        \item Case $\frac{2}{\rho^\star} < \delta < 2$: If $F$ satisfies Assumption \ref{ass:holderian_growth} with exponent $\rho$, it holds
        \[
            F(x_{k}) - \min F = o(k^{-2}) \quad \text{and} \quad |H(x_{k}) -\min\nolimits_{\argmin F} H| = o(k^{-2 + \delta}) \quad \text{as} \ k\to +\infty.
        \]
        Furthermore, $(x_{k})_{k \geq 0}$ converges weakly towards a solution to the bilevel problem \eqref{eq:bilevel_problem}.
    \end{itemize}
\end{thm}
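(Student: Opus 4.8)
The plan is to run the proof of Theorem~\ref{thm: first order algorithm} with $\eta=2$, feeding the dissipation estimate of Lemma~\ref{lem:dissipation_second_order} into the master Lemma~\ref{lem: general descent lemma for discrete case}. Fix a solution $x^*$ of \eqref{eq:bilevel_problem} and $\lambda\in(2,\alpha-1)$, and apply Lemma~\ref{lem: general descent lemma for discrete case} with $\hat\delta:=\delta$, $\eta:=2$, $C_2:=\theta^2(\lambda-2)>0$, $H_*:=H(x^*)-\inf_{x\in\cH}H(x)$, and
\[
F_k:=F(x_k)-F(x^*),\quad H_k:=H(x_k)-H(x^*),\quad g_k:=(\alpha-1-\lambda)\theta t_{k+1}\alpha_k\Big\|\tfrac{x_k-x_{k-1}}{\theta}\Big\|^2,
\]
which is $\ge0$ for large $k$ (since $\lambda<\alpha-1$ and $\alpha_k\to1$), together with $V_k$ equal to the two quadratic terms of \eqref{eq:lyapunov_second_order}; then \eqref{eq:dissipation_second_order} is exactly \eqref{eq: general inequality for E(k+1)-E(k), lemma, discrete}. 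A mean-value estimate as in Theorem~\ref{thm: first order algorithm} shows $\zeta_{k,\delta}=\theta\big((\lambda-2)t_k+\theta(\lambda-1)\big)\varepsilon_k-t_k^2(\varepsilon_k-\varepsilon_{k-1})\simeq k^{\eta-1-\delta}=k^{1-\delta}$ with positive constants (both surviving terms carry the right sign), so \eqref{eq:general descent lemma for discrete case, assumption on theta} holds and the case $\delta>2$ follows at once, giving $F(x_k)-\min F=\mathcal{O}(k^{-2})$.

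For the other two cases, assuming Assumption~\ref{ass:qualification}, I would verify the integrability hypotheses exactly as before: by \eqref{eq: Attouch-Czarnercki implies the integrability condition of the lemma} the Attouch--Czarnecki condition yields \eqref{eq: Attouch-Czarnecki condition lemma, discrete}, while Assumption~\ref{ass:holderian_growth} yields \eqref{eq: Hölderian error bound condition lemma, discrete} through Lemma~\ref{lem: Hölderian error bound implies the second condition of the lemma} and \eqref{eq: Attouch-Czarnecki condition lemma, discrete} through \eqref{eq: bound for conjugates}. Lemma~\ref{lem: general descent lemma for discrete case} then delivers $F_k=\mathcal{O}(k^{-2})$, the existence of $\lim_k E_k^\lambda$, and the summabilities $\sum_k t_kF_k<\infty$ and $\sum_k t_k\varepsilon_{k-1}|H_k|<\infty$ (the latter comparable to $\sum_k\zeta_{k,\delta}|H_k|$), as well as $\sum_k g_k<\infty$, which reads $\sum_k k\|x_k-x_{k-1}\|^2<\infty$.

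The genuinely new step, absent in the first-order case, is recovering $\lim_k\|x_k-x^*\|$ required by Opial's Lemma~\ref{lem: discrete Opial}, since \eqref{eq:lyapunov_second_order} mixes position and momentum. In contrast to \eqref{eq:lyapunov_first_order}, the energy depends on $\lambda$ only affinely: expanding shows, for $2<\lambda_1<\lambda_2<\alpha-1$,
\[
E_k^{\lambda_2}-E_k^{\lambda_1}=(\lambda_2-\lambda_1)\Big[\big\langle x_{k-1}-x^*,\,t_k\tfrac{x_k-x_{k-1}}{\theta}\big\rangle+\tfrac{\alpha-1}{2}\|x_{k-1}-x^*\|^2\Big],
\]
so varying $\lambda$ shows the bracket converges. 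Writing $p_k:=\tfrac12\|x_{k-1}-x^*\|^2$ and using the polarization identity, the bracket equals $(k+\gamma)(p_{k+1}-p_k)+(\alpha-1)p_k-\tfrac{t_k}{2\theta}\|x_k-x_{k-1}\|^2$, whose last term is summable because $\sum_k k\|x_k-x_{k-1}\|^2<\infty$; hence $\lim_k\big[(k+\gamma)(p_{k+1}-p_k)+(\alpha-1)p_k\big]$ exists. A discrete integrating-factor argument---the analogue of $t\dot p+(\alpha-1)p\to\ell\Rightarrow p\to\ell/(\alpha-1)$, in the spirit of \cite{acfr23}---then shows $\lim_k p_k$, i.e. $\lim_k\|x_k-x^*\|$, exists. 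This momentum bookkeeping, which also controls the velocity contribution to $\|v_k\|$ where $v_k:=\lambda(x_{k-1}-x^*)+t_k\tfrac{x_k-x_{k-1}}{\theta}$, is the step I expect to be the main obstacle.

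The conclusion then follows the first-order template. Setting $q_k:=t_k^2F_k+t_k^2\varepsilon_{k-1}H_k$, so that $E_k^\lambda=q_k+\tfrac12\|v_k\|^2+\tfrac{\lambda(\alpha-1-\lambda)}{2}\|x_{k-1}-x^*\|^2$, the previous limits together with the momentum control give the existence of $\lim_k q_k$, and $\sum_k|q_k|/t_k\le\sum_k t_kF_k+\sum_k t_k\varepsilon_{k-1}|H_k|<\infty$ forces $\lim_k q_k=0$. Boundedness of $(x_k)_{k\ge0}$ yields a weak cluster point $x^\diamond$; from $F_k=\mathcal{O}(k^{-2})\to0$ and weak lower semicontinuity of $F$ we get $x^\diamond\in\argmin F$, and passing to the liminf in $t_k^2\varepsilon_{k-1}H_k\le q_k$ along the subsequence---using that $t_k^2\varepsilon_{k-1}\simeq k^{2-\delta}$ is bounded away from zero and the weak lower semicontinuity of $H$---gives $H(x^\diamond)\le H(x^*)$, so $x^\diamond$ solves \eqref{eq:bilevel_problem} and Lemma~\ref{lem: discrete Opial} yields weak convergence to a solution, renamed $x^*$. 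Finally, exactly as in \eqref{eq:proof_thm_first_order_4_1}--\eqref{eq:proof_thm_first_order_4_2}, bounding $q_k$ below by $t_k^2F_k-t_k^2\varepsilon_{k-1}\langle p^*,x_k-x^*\rangle$ when $\delta=2$ (with $-p^*\in\partial H(x^*)$ and $t_k^2\varepsilon_{k-1}=\mathcal{O}(1)$) and by $t_k^2F_k-C_Hk^{-\delta+2/\rho^\star}$ when $\tfrac{2}{\rho^\star}<\delta<2$, and letting $k\to+\infty$ with $q_k\to0$, forces $t_k^2F_k\to0$, i.e. $F(x_k)-\min F=o(k^{-2})$; then $t_k^2\varepsilon_{k-1}H_k=q_k-t_k^2F_k\to0$ gives $H(x_k)\to\min\nolimits_{\argmin F}H$ for $\delta=2$ and $|H(x_k)-\min\nolimits_{\argmin F}H|=o(k^{\delta-2})$ for $\delta<2$.
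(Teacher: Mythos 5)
Your proposal retraces the paper's own proof: the same instantiation of Lemma~\ref{lem: general descent lemma for discrete case} via the dissipation estimate of Lemma~\ref{lem:dissipation_second_order}, the same observation that $E_k^{\lambda}$ is affine in $\lambda$ so that comparing $2<\lambda_1<\lambda_2<\alpha-1$ shows the bracket $p_k:=\tfrac{\alpha-1}{2}\|x_{k-1}-x^*\|^2+t_k\langle x_{k-1}-x^*,\tfrac{x_k-x_{k-1}}{\theta}\rangle$ converges, the same reduction of $\lim_k\|x_{k-1}-x^*\|$ to a discrete integrating-factor statement (the step you flag as the main obstacle is precisely Lemma~\ref{lem: for the existence of lim|xk-x*|}, which the paper proves for exactly this purpose, so nothing new is needed there), and the same Opial plus little-$o$ endgame. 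One correction before the real issue: Lemma~\ref{lem: general descent lemma for discrete case} is normalized by $\delta=\eta\hat\delta$, i.e.\ its internal regularization exponent is $\eta\hat\delta$, so to match the algorithm's $\varepsilon_k\simeq k^{-\delta}$ with $\eta=2$ you must take $\hat\delta:=\delta/2$, not $\hat\delta:=\delta$; with your substitution the lemma's case split $\hat\delta>1$, $\hat\delta=1$, $\tfrac{1}{\rho^\star}<\hat\delta<1$ would sit at $\delta=1$ rather than $\delta=2$, contradicting the case boundaries you then use. Since your subsequent case analysis is consistent with the correct substitution, this reads as a slip rather than a misunderstanding.

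The genuine gap is your definition of $q_k$ and the claim that $\lim_k q_k$ exists. You set $q_k:=t_k^2F_k+t_k^2\varepsilon_{k-1}H_k$, excluding the velocity term. Expanding your own decomposition gives
\begin{equation*}
\frac{1}{2}\|v_k\|^2+\frac{\lambda(\alpha-1-\lambda)}{2}\|x_{k-1}-x^*\|^2=\lambda p_k+\hat g_k, \qquad \text{where}\quad \hat g_k:=\frac{t_k^2}{2}\Big\|\frac{x_k-x_{k-1}}{\theta}\Big\|^2,
\end{equation*}
so $E_k^{\lambda}=q_k+\lambda p_k+\hat g_k$, and deducing the existence of $\lim_k q_k$ from $\lim_k E_k^{\lambda}$ and $\lim_k p_k$ requires $\lim_k\hat g_k$ to exist. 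The momentum control you actually established, $\sum_k t_k\big\|\tfrac{x_k-x_{k-1}}{\theta}\big\|^2<+\infty$, only yields $t_k\big\|\tfrac{x_k-x_{k-1}}{\theta}\big\|^2\to0$ and says nothing about $\hat g_k$, which carries an extra factor of $t_k$: for instance, $\|x_k-x_{k-1}\|^2=k^{-2}$ when $k$ is a perfect square and $k^{-3}$ otherwise is compatible with the summability, yet $\hat g_k$ then oscillates between order one and order $k^{-1}$ and has no limit. The repair is exactly the paper's bookkeeping: include the velocity term, $q_k:=t_k^2F_k+t_k^2\varepsilon_{k-1}H_k+\hat g_k$, so that $E_k^{\lambda}=q_k+\lambda p_k$ and $\lim_k q_k$ exists with no extra control; the summability step still closes, since $\sum_k|q_k|/t_k$ merely picks up the additional summable term $\tfrac12\sum_k t_k\big\|\tfrac{x_k-x_{k-1}}{\theta}\big\|^2$, giving $q_k\to0$; the weak-convergence step is unchanged, because $t_{k_j}^2\varepsilon_{k_j-1}H_{k_j}\le q_{k_j}$ still holds thanks to $\hat g_k\ge0$; and in the final inequalities one keeps $\hat g_k\ge0$ on the left-hand side, so $q_k\to0$ still forces $t_k^2F_k\to0$ and $\hat g_k\to0$, whence $t_k^2\varepsilon_{k-1}H_k=q_k-t_k^2F_k-\hat g_k\to0$, which delivers both little-$o$ rates. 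With these two corrections your argument coincides with the paper's proof.
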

\begin{proof}
Let $x^{*}$ be a solution to the bilevel problem \eqref{eq:bilevel_problem} and $\lambda \in (2, \alpha - 1])$.
The first part of the proof is a consequence of Lemma \ref{lem: general descent lemma for discrete case} with $\hat \delta := \frac{\delta}{2}$, $\eta=2$, $\theta := \sqrt{s}$, $H_{*} := H(x^{*}) - \inf_{x \in \cH} H(x)$, $C_{2} := \theta^{2} (\lambda - 2) >0$, and 
$$F_{k} := F(x_{k}) - F(x^{*}), \quad H_{k} := H(x_{k}) - H(x^{*}), \quad g_{k} := (\alpha - 1 - \lambda)\theta t_{k + 1} \alpha_{k} \left\| \frac{x_{k} - x_{k - 1}}{\theta}\right\|^{2},$$ $$V_{k} := \frac{1}{2} \| v_{k}\|^{2} + \frac{\lambda(\alpha - 1 - \lambda)}{2}\| x_{k - 1} - x^{*}\|^{2}$$
for $k \geq k_0$, where $k_0$ is given by Lemma \ref{lem:dissipation_second_order}. Arguing similarly as in the proof for Theorem \ref{thm: first order algorithm}, we can produce positive constants $C_{1, \zeta}, C_{2, \zeta}$ such that \eqref{eq:general descent lemma for discrete case, assumption on theta} holds.
    
The case $\delta > 2$ follows directly from Lemma \ref{lem: general descent lemma for discrete case}. As in the proof of Theorem \ref{thm: first order algorithm}, we treat the other two cases simultaneously. Similar arguments show that Assumption \ref{ass:attouch_czarnecki} implies condition \eqref{eq: Attouch-Czarnecki condition lemma, discrete}, and that Assumption \ref{ass:holderian_growth} implies conditions \eqref{eq: Hölderian error bound condition lemma, discrete} $+$ \eqref{eq: Attouch-Czarnecki condition lemma, discrete}, even with $\eta= 2$. Lemma \ref{lem: general descent lemma for discrete case} ensures that, in both cases, the limit $\lim_{k \to +\infty} E_{k}^{\lambda}$ exists. Choosing $2 < \lambda_{1} < \lambda_{2} < \alpha - 1$, we have
    \begin{align*}
        E_{k}^{\lambda_{2}} - E_{k}^{\lambda_{1}} = & \ \frac{1}{2} \left\| \lambda_{2}(x_{k - 1} - x^{*}) + t_{k}\frac{x_{k} - x_{k - 1}}{\theta}\right\|^{2} + \frac{\lambda_{2}(\alpha - 1 - \lambda_{2})}{2} \| x_{k - 1} - x^{*}\|^{2} \\
        &- \frac{1}{2} \left\| \lambda_{1}(x_{k - 1} - x^{*}) + t_{k}\frac{x_{k} - x_{k - 1}}{\theta}\right\|^{2} - \frac{\lambda_{1}(\alpha - 1 - \lambda_{1})}{2} \| x_{k - 1} - x^{*}\|^{2} \\
        = & \ \frac{\lambda_{2}^{2}}{2} \| x_{k - 1} - x^{*}\|^{2} + \lambda_{2}t_{k} \left\langle x_{k - 1} - x^{*}, \frac{x_{k} - x_{k - 1}}{\theta}\right\rangle + \frac{t_{k}^{2}}{2} \left\| \frac{x_{k} - x_{k - 1}}{\theta}\right\|^{2} \\
        &+ \left[ \frac{\lambda_{2}(\alpha - 1)}{2} - \frac{\lambda_{2}^{2}}{2}\right] \| x_{k - 1} - x^{*}\|^{2} - \frac{\lambda_{1}^{2}}{2}\| x_{k - 1} - x^{*}\|^{2} - \lambda_{1}t_{k} \left\langle x_{k - 1} - x^{*}, \frac{x_{k} - x_{k - 1}}{\theta}\right\rangle \\
        &- \frac{t_{k}^{2}}{2} \left\| \frac{x_{k} - x_{k - 1}}{\theta}\right\|^{2} - \left[ \frac{\lambda_{1}(\alpha - 1)}{2} - \frac{\lambda_{1}^{2}}{2}\right] \| x_{k - 1} - x^{*}\|^{2} \\
        = & \ (\lambda_{2} - \lambda_{1}) \underbrace{\left[ \frac{\alpha - 1}{2} \| x_{k - 1} - x^{*}\|^{2} + t_{k} \left\langle x_{k - 1} - x^{*}, \frac{x_{k} - x_{k - 1}}{\theta}\right\rangle\right]}_{=: p_{k}} \quad \text{for all} \ k \geq 1.
    \end{align*}
    Since $\lambda_{2} - \lambda_{1} \neq 0$, and $E_k^{\lambda_1}, E_k^{\lambda_2}$ admit a limit, also $\lim_{k\to +\infty} p_{k}$ exists. We now rewrite $E_{k}^{\lambda_{1}}$ as 
    \begin{align*}
        E_{k}^{\lambda_{1}} = &\: t_{k}^{2} (\Psi_{k - 1}(x_{k}) - \Psi_{k - 1}(x^{*})) + \frac{\lambda_{1}^{2}}{2} \| x_{k - 1} - x^{*}\|^{2} + \lambda_{1} t_{k} \left\langle x_{k - 1} - x^{*}, \frac{x_{k - 1} - x_{k}}{\theta}\right\rangle \\
        &+ \frac{t_{k}^{2}}{2} \left\| \frac{x_{k - 1} - x_{k}}{\theta}\right\|^{2} + \left[ \frac{\lambda_{1} (\alpha - 1)}{2} - \frac{\lambda_{1}^{2}}{2}\right] \| x_{k - 1} - x^{*}\|^{2} \\
        = &\: t_{k}^{2} (\Psi_{k - 1}(x_{k}) - \Psi_{k - 1}(x^{*})) + \frac{t_{k}^{2}}{2} \left\|\frac{x_{k} - x_{k - 1}}{\theta}\right\|^{2} + \lambda_{1} p_{k} \\
        = &\: \underbrace{t_{k}^{2} ( F(x_{k}) - F(x^{*})) + t_{k}^{2} \varepsilon_{k-1}(H(x_{k}) - H(x^{*})) + \frac{t_{k}^{2}}{2} \left\| \frac{x_{k} - x_{k - 1}}{\theta}\right\|^{2}}_{=: q_{k}} + \lambda_{1} p_{k} \quad \text{for all} \ k \geq 1.
    \end{align*}
    Since both $\lim_{k\to +\infty} E_{k}^{\lambda_{1}}$ and $\lim_{k\to +\infty} p_{k}$ exist, so does $\lim_{k\to +\infty} q_{k}$. Both for the cases $\delta = 2$ and $\frac{2}{\rho^\star} < \delta < 2$, according to the summability statements given by Lemma \ref{lem: general descent lemma for discrete case} together with the fact that $t_k \varepsilon_{k-1} =\mathcal{O}(k^{1 - \delta})$ as $k \to +\infty$, we have
    \begin{align*}
        \sum_{k = 1}^{+\infty} \frac{|q_{k}|}{t_{k}} &\leq \sum_{k = 1}^{+\infty} t_{k}(F(x_{k}) - F(x^{*})) + \sum_{k = 1}^{+\infty} t_{k}\varepsilon_{k-1} |H(x_{k}) - H(x^{*})| + \frac{1}{2} \sum_{k = 1}^{+\infty} t_{k} \left\| \frac{x_{k} - x_{k - 1}}{\theta}\right\|^{2} <+\infty.
    \end{align*}
Since $\lim_{k\to +\infty} | q_{k}|$ exists as well, it must be the case that $\lim_{k\to +\infty} q_{k} = 0$. Observe that, as  $\left(t_{k} \big\| \frac{x_{k} - x_{k - 1}}{\theta}\big\|^{2}\right)_{k\geq 1}$ is summable, its limit exists and is zero. If we set $\varphi_{k} := \frac{1}{2} \| x_{k - 1} - x^{*}\|^{2}$, we have for all $k \geq 1$
    \begin{align*}
        & (\alpha - 1) \varphi_{k + 1} + \frac{t_{k} - \theta(\alpha - 1)}{\theta} (\varphi_{k + 1} - \varphi_{k}) = (\alpha - 1) \varphi_{k} + \frac{t_{k}}{\theta} (\varphi_{k + 1} - \varphi_{k}) \\
        = & \frac{\alpha - 1}{2} \| x_{k - 1} - x^{*}\|^{2} + \frac{t_{k}}{\theta} \left[ \langle x_{k} - x_{k - 1}, x_{k} - x^{*}\rangle - \frac{1}{2} \| x_{k} - x_{k - 1}\|^{2}\right] \\
        = & \frac{\alpha - 1}{2} \| x_{k - 1} - x^{*}\|^{2} + \frac{t_{k}}{\theta} \left[ \langle x_{k} - x_{k - 1}, x_{k - 1} - x^{*}\rangle + \frac{1}{2} \| x_{k} - x_{k - 1}\|^{2}\right] = p_{k} + \frac{t_{k}}{2\theta} \| x_{k} - x_{k - 1}\|^{2}.
    \end{align*}
Therefore, according to Lemma \ref{lem: for the existence of lim|xk-x*|}, it must be the case that $\lim_{k\to +\infty} \varphi_{k}$ exists as well. As $x^{*}$ is an arbitrary solution to \eqref{eq:bilevel_problem}, we have verified the first condition of Opial's Lemma (see Lemma \ref{lem: discrete Opial}).
     
We proceed to show weak convergence of $(x_k)_{k \geq 0}$ and then little-$o$ rates. As $\lim_{k\to+\infty}\varphi_k \in \R$, we get that $(x_k)_{k \geq 0}$ is bounded. Let $x^{\diamond}$ be a weak sequential cluster point and $x_{k_j} \rightharpoonup x^{\diamond}$ as $j \to +\infty$. From the convergence rate $F(x_{k}) - F(x^{*}) = \mathcal{O}(k^{-2})$ as $k \to +\infty$ given by Lemma \ref{lem: general descent lemma for discrete case}, and the weak lower semicontinuity of $F$, we deduce that $x^{\diamond} \in \argmin F$. On the other hand, denoting by $\hat g_k:= \tfrac{t_{k}^2}{2\theta^2}\|x_k - x_{k-1}\|^2$, we have
     \begin{equation}
     	t_{k_j}^2\varepsilon_{k_{j}-1} \big(H(x_{k_j}) + H(x^*)\big)\leq t_{k_j}^2 (F(x_{k_j})- F(x^*)) + t_{k_j}^2\varepsilon_{k_{j}-1} \big(H(x_{k_j}) + H(x^*)\big) + \hat g_{k_j} = q_{k_j} \quad \text{for all} \ j \geq 0.
     \end{equation} 
Thus, passing to the liminf, using that $(t_k^2 \varepsilon_{k-1})_{k \geq 1}$ is bounded away from zero, and the weak lower semicontinuity of $H$, we deduce that $H(x^{\diamond})\leq H(x^*)$. As $x^{\diamond} \in \argmin F$ and $x^*$ is an arbitrary solution to \eqref{eq:bilevel_problem}, $x^{\diamond}$ is also a solution to \eqref{eq:bilevel_problem}. Opial's Lemma (see Lemma \ref{lem: discrete Opial}) yields that the whole sequence $(x_k)_{k \geq 0}$ converges weakly to a solution to the bilevel problem \eqref{eq:bilevel_problem}. Let us denote this solution by $x^*$ and consider the above estimates with respect to this specific solution.

To derive little-$o$ rates, we proceed in a similar way as in the proof for Algorithm \ref{alg:first_order}. If $\delta = 2$, we use the subgradient inequality applied to $H$ with $-p^* \in \partial H(x^*)$. If $\frac{2}{\rho^{\star}} < \delta < 2$, we use the lower bound for $t_{k}^{2} \varepsilon_{k - 1} (H(x_{k}) - H(x^{*}))$ provided by Lemma \ref{lem: general descent lemma for discrete case}. For all $k \geq 1$, we have, respectively, 
     \begin{align}
     	t_{k}^{2} (F(x_{k}) - F(x^{*})) + \hat{g}_{k} &\leq q_{k} + t_{k}^{2} \varepsilon_{k - 1} \langle p^{*}, x_{k} - x^{*}\rangle, \\
        t_{k}^{2} (F(x_{k}) - F(x^{*})) + \hat{g}_{k} &\leq q_{k} + C_{H} k^{-\delta + \frac{2}{\rho^{\star}}}.
     \end{align}
When $\delta = 2$, we have $t_{k}^{2} \varepsilon_{k - 1} = \mathcal{O}(1)$ as $k\to +\infty$. Passing to the limit and using $x_k\rightharpoonup x^*$ or $k^{-\delta + \frac{2}{\rho^{\star}}} \to 0$, and $q_k \to 0$ as $k\to +\infty$, we get $\lim_{k\to+\infty} t_k^2 (F(x_k)-F(x^*)) = 0$. Consequently, since $q_k \to 0$, also the limit of $t_k^2 \varepsilon_{k-1} (H(x_k)-H(x^*))$ exists and must be zero in both cases. 
\end{proof}

\section{Discussion}\label{sec:discussion}

In Sections \ref{sec:first_order} and \ref{sec:second_order}, we analyzed the asymptotic behavior of Algorithms \ref{alg:first_order} and \ref{alg:second_order} under the geometric assumptions illustrated in Figure \ref{fig:geometric_setting}. In what follows, we present some additional results that arise from our analysis. Although these were not included in the main theorems, they are nonetheless noteworthy. For the sake of brevity, we focus each time on either Algorithm \ref{alg:second_order} or Algorithm \ref{alg:first_order}, as similar arguments apply to both cases.

\paragraph{Ergodic convergence rates.} Let us first drop all geometric assumptions and consider Algorithm \ref{alg:second_order}. When $0 < \delta < 2$, starting from the dissipativity estimate in Lemma \ref{lem:dissipation_second_order} and arguing as in \eqref{eq: pre rates for Fk, case 0<delta<=1, lemma, discrete} we obtain
\begin{equation}\label{eq:discussion_rates_1}
    F(x_{k}) - \min F = \mathcal{O}(k^{-\delta}) \quad \text{and} \quad \min_{k_{0}\leq \ell \leq k + 1} \{H(x_{\ell}) - \min\nolimits_{\argmin F} H\} = \mathcal{O}(k^{-2 + \delta}) \quad \text{as} \  k\to +\infty,
\end{equation}
which was reported in \cite{mst24}. Alternatively, if $\delta \in (0, 2)$ and $\delta\neq 1$, we can obtain \emph{simultaneous} (i.e., computed on the same sequence) ergodic convergence rates expressed in terms of
\begin{equation}
	x_{k}^{\text{best}} := \argmin \big\{ H(x_{k + 1}), H( \overline{x}_{k, \zeta})\big\}, \quad  \text{where} \quad \overline{x}_{k, \zeta} := \frac{1}{Z_{k, \delta}} \sum_{\ell = k_{0}}^{k} \zeta_{\ell, \delta} x_{\ell}, \quad Z_{k, \delta} :=\sum_{\ell = k_{0}}^{k} \zeta_{\ell, \delta}.
\end{equation}
Specifically, since $F(x_k)- \min F = \mathcal{O}(k^{-\delta})$, summing up from $k_0$ to $k$ inequality \eqref{eq:dissipation_second_order}, using Jensen's inequality, the convexity of $F$, and the growth $Z_{k, \delta}=\mathcal{O}(k^{-2 + \delta})$ as $k\to +\infty$, we obtain\footnote{Note that, strictly speaking, the bound on $H(x^k)-\min\nolimits_{\argmin F} H$ is not a \emph{convergence rate}, but only a \emph{one-sided estimate}. Here, we use a terminology from \cite{mst24}. We can provide a two-sided estimate, i.e., a convergence rate, only in the setting of Theorem \ref{thm:second_order_main}.}
\begin{equation}\label{eq:discussion_rates_3}
	F(x_{k}^{\text{best}}) - \min F = \mathcal{O}(k^{-\delta}) \quad \text{and} \quad H(x_{k}^{\text{best}}) - \min\nolimits_{\argmin F} H \leq \mathcal{O}(k^{-2+\delta})\quad \text{as} \ k\to +\infty.
\end{equation}
Furthermore, if $\delta=1$---which is the case considered in \cite[Theorem 3]{mst24}---the convergence rate for the inner function is weaker by a logarithmic factor than in \eqref{eq:discussion_rates_3}, $F(x_{k}^{\text{best}}) - F(x^{*}) = \mathcal{O}(k^{-1}\ln(k))$ as $k \to +\infty$, and $(\zeta_{\ell, \delta})_{k \geq 1}$ is bounded and bounded away from zero, so $\overline x_{k, \zeta}$ is actually closer to the usual ergodic sequence. It follows from \eqref{eq:discussion_rates_3}, that if $(x_{k}^{\text{best}})_{k \geq 1}$ is bounded (e.g., if $F$ or $H$ are coercive\footnote{In \cite{ms23}, the coercivity of \( H \) is assumed to ensure the boundedness of \( (x_k^{\text{best}})_{k \geq 1} \). Here, we show that, alternatively, it suffices to assume coercivity of \( F \) instead.}, once again from \eqref{eq:discussion_rates_3}), weak cluster points of $(x_{k}^{\text{best}})_{k \geq 1}$ lie in the solution set of \eqref{eq:bilevel_problem}. Under this assumption, if \eqref{eq:bilevel_problem} admits a unique solution, the whole sequence $(x_{k}^{\text{best}})_{k \geq 1}$ converges weakly to it. Alternatively, without assuming uniqueness of solutions but $\dim \cH < +\infty$, we immediately obtain that the distance from $(x^k)_{k \geq 0}$ to the solutions set of the bilevel problem  \eqref{eq:bilevel_problem} converges to zero as $k\to +\infty$, which is in line with \cite{ms23, mst24, ltvp25}. However, in infinite dimensions, no iterates convergence can be derived with this approach.

\paragraph{Weakening the H{\"o}lderian growth condition.} When $F$ is coercive, we can assume the H{\"o}lderian growth condition, i.e., Assumption \ref{ass:holderian_growth}, to hold only \emph{locally}. Indeed, suppose that, for some $R>0$, 
\begin{equation}
	\tau \rho^{-1}\dist(x, \argmin F)^{\rho} \leq F(x) - \min F \quad \text{for all} \  x \in C_R :=\argmin F + B_R(0).
\end{equation}
Using \eqref{eq:discussion_rates_1}, it is easy to see that $(x_k)_{k \geq 0}$ eventually lies in the bounded set $C_R$. Therefore, up to replacing the non-smooth part of $F$ by setting $\hat{f}_{R} := \hat{f} + \iota_{C_R}$, which does not change the algorithm, the same analysis assuming Assumption \ref{ass:holderian_growth} globally on $\hat f_R$ yields Point 3 in Theorem \ref{thm:second_order_main}. A similar remark can be done for Algorithm \ref{Bilevel Proximal Gradient algorithm}.

\paragraph{General regularization sequence $(\varepsilon_k)_{k \in \N}$.} In Lemma \ref{lem:dissipation_first_order} and Lemma \ref{lem:dissipation_second_order} we have intentionally made explicit the dependence on the regularization sequence $(\varepsilon_k)_{k \geq 0}$ and indeed the proof does not depend on the specific definition of $\varepsilon_k$ in \eqref{eq:def_epsilon}. Following the same arguments of Theorem \ref{thm: first order algorithm} and Theorem \ref{thm:second_order_main} we can thus extend our analysis to more general parameter sequences, provided that $\zeta_{k, \delta}\geq 0$ for all $k \geq 1$. Let us illustrate one of such extensions for Algorithm \ref{alg:first_order}.

Consider \eqref{eq:dissipation_first_order} and denote by $\zeta_{k, \varepsilon}$ the term $\zeta_{k, \delta}$ to emphasize that the term depends on $(\varepsilon_k)_{k \geq 0}$, which is not necessarily of polynomial decrease, but, rather, the discrete counterpart to $(\lambda - 1)\varepsilon(t) - t\dot \varepsilon(t)$, see $\zeta(t)$ in Appendix \ref{sec:continuous_time_first_order}. Then, for all $k \geq k_0$, applying the same argument as in \eqref{eq: Attouch-Czarnercki implies the integrability condition of the lemma}, we obtain, setting $\bar \zeta_{k, \varepsilon}:=\frac{\zeta_{k, \varepsilon}}{s(\lambda-1)}$, 
\begin{equation}\label{eq:general_epsilon_1}
	\begin{aligned}
		E_{k + 1}^{\lambda} - E_{k}^{\lambda} &\leq -s(\lambda - 1) \big[\bar \zeta_{k, \varepsilon} (H(x_{k}) - H(x^{*})) - (F(x_{k}) - F(x^{*})) \big]\\
		& \leq s(\lambda - 1)\big[ (F - \min F)^*(\bar \zeta_{k, \varepsilon} p^*) -  \sigma_C(\bar \zeta_{k, \varepsilon} p^*)\big].
	\end{aligned}
\end{equation}
In this context, the Attouch--Czarnecki condition, i.e., Assumption \ref{ass:attouch_czarnecki}, would now require the summability of the right-hand side of \eqref{eq:general_epsilon_1}. This yields, after suitably bounding $E_k^\lambda$ from below, that the limit  $\lim_{k\to+\infty}E_k^{\lambda}$ exists, and in particular, for some constant $C>0$,
\begin{equation}\label{eq:general_epsilon_2}
	t_k (F(x_k) - \min F) \leq C + t_k \varepsilon_k \min\nolimits_{\argmin F} H, \quad \text{i.e.,} \quad F(x_k) - \min F = \mathcal{O}(k^{-1} + \varepsilon_k) \quad \text{as} \ k \to+\infty.
\end{equation}
Plugging $\varepsilon_k$ as in \eqref{eq:def_epsilon}, \eqref{eq:general_epsilon_2} is in line with Point 2 of Theorem \ref{thm: first order algorithm}. The little-$o$ rate necessitates a more careful analysis, which we believe can be performed using the same techniques of Theorem \ref{thm: first order algorithm}.

\section{Numerical Experiments}\label{sec:numerical_experiments}

In this section, we present our numerical experiments, which are performed in Python on a 12thGen.~Intel(R) Core(TM) i7–1255U, 1.70–4.70 GHz laptop with 16 Gb of RAM and are available for reproducibility at \href{https://github.com/echnen/fast-bilevel-methods}{https://github.com/echnen/fast-bilevel-methods}.

\subsection{Considered algorithms}\label{sec:num_compared_algorithms}

To begin, we briefly present the numerical algorithms utilized for comparison to test the performance of Algorithms \ref{alg:first_order} and \ref{alg:second_order}. 
To the best of our knowledge, these are the only approaches that can cope with the generality of \eqref{eq:bilevel_problem}, so we refrain from discussing other variants requiring, e.g., strong convexity of the outer function.

\paragraph{Fast Bi-level Proximal Gradient.} The \emph{Fast Bi-level Proximal Gradient method} (FBi-PG) was recently introduced by Merchav, Sabach and Teboulle in \cite{mst24}. It can be understood a special case of Algorithm \ref{alg:second_order} with $\gamma = \alpha - 1$, $c = 1$, and $\beta=\gamma$. Specifically, although the method is introduced in \cite[(2.4)--(2.6)]{mst24} with $t_k$ picked as in the original Nesterov's method, the convergence results are only stated with $t_k:=\frac{k + a}{a}$ for some $a \geq 2$. This choice yields a momentum parameter $\alpha_k:=t_k^{-1}(t_{k-1}-1)$ that can be obtained from our general choice in Algorithm \ref{alg:second_order} by setting $\gamma =\alpha-1$ and $a:=\alpha-1$. Being a special case of Algorithm \ref{alg:second_order}, its convergence properties are specified in Theorem \ref{thm:second_order_main}. 

\paragraph{Static Bilevel Method.} The \emph{Static Bilevel Method} (staBiM) was introduced by Latafat, Themelis, Villa and Patrinos in \cite{ltvp25}. While the method is designed for an arbitrary slowly vanishing, i.e., not summable, regularization sequence $(\varepsilon_k)_{k \in \N}$, in these experiments we always consider $\varepsilon_k$ as in \eqref{eq:def_epsilon} with $\delta \in (0, 1)$. For a fixed $\tilde \theta \in (0, 1)$ and $\eta_0>0$, the method iterates for all $k\geq 0$
\begin{equation}
        x_{k+1} := \prox_{\theta_{k+1}(\hat f + \varepsilon_k \hat h)}\big( x_k - \theta_{k+1}(\nabla f(x_k) + \varepsilon_k\nabla h(x_k)) \big),
\end{equation}
where $(\theta_k)_{k \geq 1}$ is the sequence defined by $\theta_{k+1}:=\frac{\tilde \theta}{\eta_{k+1}L_{\nabla h} + L_{\nabla f}}$, where $\eta_{k+1}$ can be picked arbitrarily in $\big[\frac{3}{4}\eta_k, \eta_k\big]$ for all $k\geq 0$. Observe the similarity with Algorithm \ref{alg:first_order}: While allowing for general regularization sequences $(\varepsilon_k)_{k \geq 0}$, and originally designed in the more general context of locally Lipschitz gradients, it requires a non-stationary step size $\theta_k$ that, picking $\eta_{k+1}=\frac{3}{4}\eta_k$ for all $k \in \N$, asymptotically behaves like $\theta_k \simeq \frac{\tilde \theta}{L_{\nabla f}}$ with $\tilde \theta \in (0, 1)$. In contrast, our analysis suggests that indeed, we can choose i) stationary and ii) twice larger step sizes as $\theta_k=\frac{\tilde \theta}{L_{\nabla f}}$ and $\tilde \theta \in (0, 2)$ for all $k \geq 1$.

\paragraph{Bi-Sub-Gradient method.} The \emph{Bi-Sub-Gradient method} in its two different variants was introduced by Merchav and Sabach in \cite{ms23}. In this paper, we focus on the so-called Variant II, (Bi-SG-II), which, for $\varepsilon_k := \frac{c}{(k+ 1)^{\delta}}$, $\delta \in (\frac{1}{2}, 1]$, $c \leq \min\{\frac{1}{L_{\nabla h}}, 1\}$, and $\theta\leq \frac{1}{L_{\nabla f}}$, iterates for all $k\geq 0$
\begin{equation}
    \left\{\begin{aligned}
        &y_{k+1}:=\prox_{\theta \hat f}(x_k - \theta \nabla f(x_k)), \\
        &x_{k+1}:=\prox_{\theta \varepsilon_k \hat h}(y_{k+1} - \theta\varepsilon_k\nabla h(y_{k+1})).
    \end{aligned}\right.
\end{equation}
The method alternates a proximal-gradient step in the inner problem (with step size $\theta$) and in the outer problem (with step size $\theta\varepsilon_k$). The benefit of this formulation lies in the fact that it does not require computing the proximity operator of $\theta\hat f + \theta\varepsilon_k \hat h$, which for staBiM and Algorithm \ref{alg:first_order} requires a product-space trick as suggested in \cite{ds23}.

\subsection{Linear inverse problem with simulated data}\label{sec:num_nem}

To test the dependency of the proposed method on the algorithm's parameter, we consider the following bilevel problem
\begin{equation}\label{eq:num_bilevel_nemirovsky}
    \begin{aligned}
        \min_{x \in \R^d} \ \|x - \hat x\|_1 \quad \text{subject to} \ x \in \argmin_{z \in \R^d} f(z)\,, 
    \end{aligned}
\end{equation}
where $f: \R^d \to \R$ is the quadratic function defined, for $1<J<d$, by
\begin{equation}\label{eq:nemirowsky_function}
    f(x) := \frac{1}{2}(x_1 - 1)^2 + \frac{1}{2}\sum_{j = 2}^J(x_{j-1}-x_j)^2, \quad \text{for} \ x \in \R^d,
\end{equation}
and $\hat x :=(50, \dots, 50)$. The inner function \eqref{eq:nemirowsky_function} is a standard benchmark to test first-order methods, usually revealing their sublinear rates of convergence \cite{ny83}. In particular, it is not strongly convex, but satisfies Assumption \ref{ass:holderian_growth} with $\rho = 2$. Since $J<d$, the set of minimizers of $f$ is $\argmin f =\{x \in \R^d \ : \ x_1=\dots = x_J = 1\}$ and thus the unique solution to \eqref{eq:num_bilevel_nemirovsky} is $x^*:=(1,\dots, 1, 50, \dots, 50)$.

In our first experiment, we compare the performance of Algorithms \ref{alg:first_order} and \ref{alg:second_order} with $\beta = 10$, $\gamma=20$, $c = 10$, and $\alpha = 4$ against the methods described in Subsection \ref{sec:num_compared_algorithms}. For fair comparison, for each of them we set $95\%$ of the largest possible step size, and consider the same initial point $x^0:=0 \in \R^d$. Then, we let the algorithms run for $10^4$ iterations with $20$ different choices of $\delta \in (1, 2)$, sampled randomly---for Algorithm \ref{alg:first_order}, staBiM and Bi-SG-II we use $\delta/2$ instead of $\delta$ to ensure convergence. We measure inner and outer residual values and show the results in Figure \ref{fig:nem}.

Although the objective of this paper is not to compete with existing algorithms, we can observe that the improved flexibility of Algorithm \ref{alg:second_order} plays a crucial role in performance, yielding results that clearly surpass those of FBi-PG. In this example, specifically from Figure \ref{fig:nem_obj_inner}, we can also observe that the first-order methods Algorithm \ref{alg:first_order}, staBiM and Bi-SG-II may indeed only achieve the rate $o(k^{-1})$ on the inner objective function decrease. On the other hand, the two second-order methods employed, i.e., Algorithm \ref{alg:second_order} and FBi-PG, show a much faster convergence behavior. Further, Figure \ref{fig:nem_delta} confirms the inherent trade-off between inner and outer function performance as $\delta$ varies, cf.~Section \ref{sec:discussion}.

\begin{figure}[t]
    \centering
    \begin{subfigure}{0.33\linewidth}
    \includegraphics[width=\linewidth]{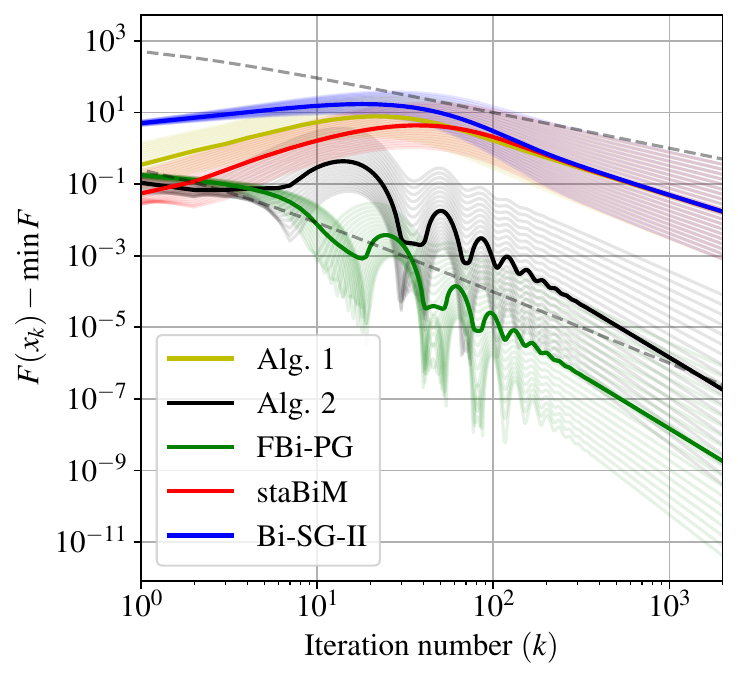}
    \caption{Inner objective residual.}
    \label{fig:nem_obj_inner}    
    \end{subfigure}
    \begin{subfigure}{0.32\linewidth}
    \includegraphics[width=\linewidth]{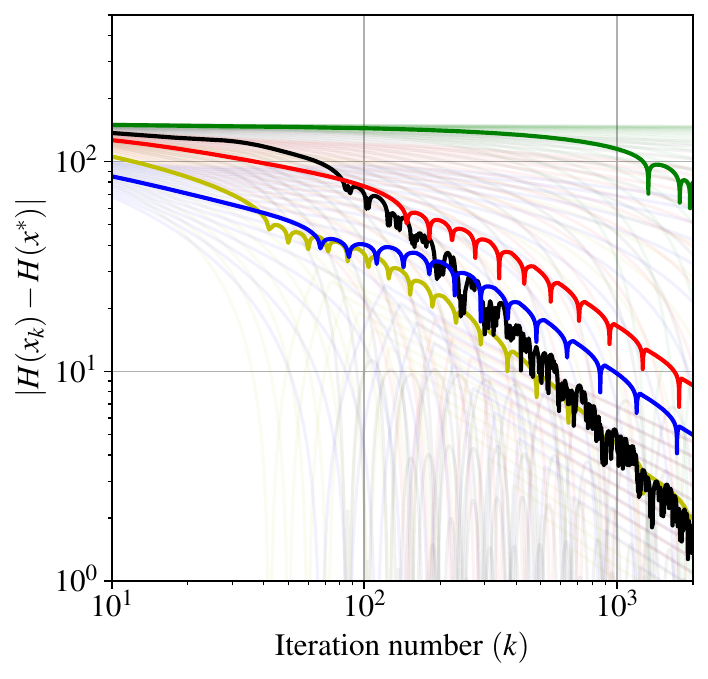}
    \caption{Outer objective residual.}
    \label{fig:nem_obj_outer}    
    \end{subfigure}
    \begin{subfigure}{0.32\linewidth}
    \includegraphics[width=\linewidth]{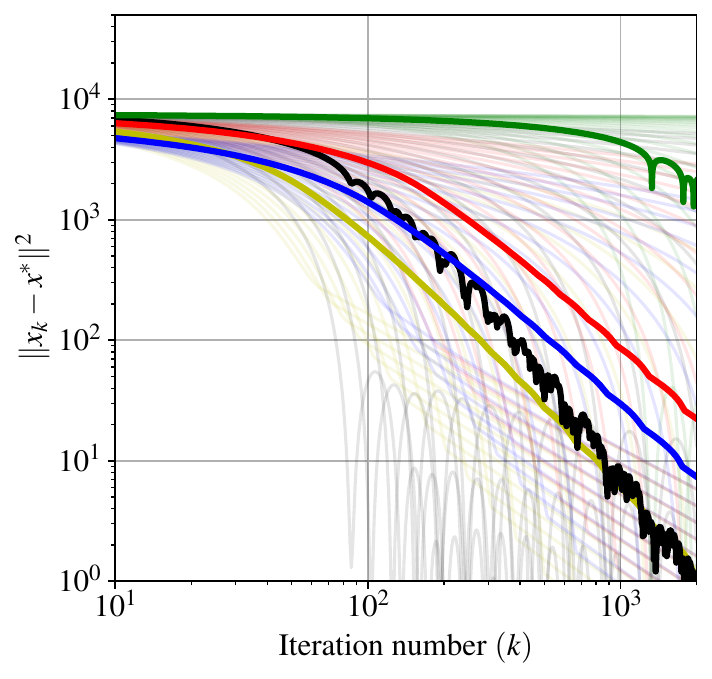}
    \caption{Distance to solution.}
    \label{fig:nem_obj_dist}    
    \end{subfigure}
    \caption{Result of the experiment in Subsection \ref{sec:num_nem}: Comparing inner and outer function residuals as well as distance to solution for the methods listed in Subsection \ref{sec:num_compared_algorithms}.}
    \label{fig:nem}
\end{figure}%
\begin{figure}[t]
    \centering
    \begin{subfigure}{0.29\linewidth}
    \includegraphics[width=\linewidth]{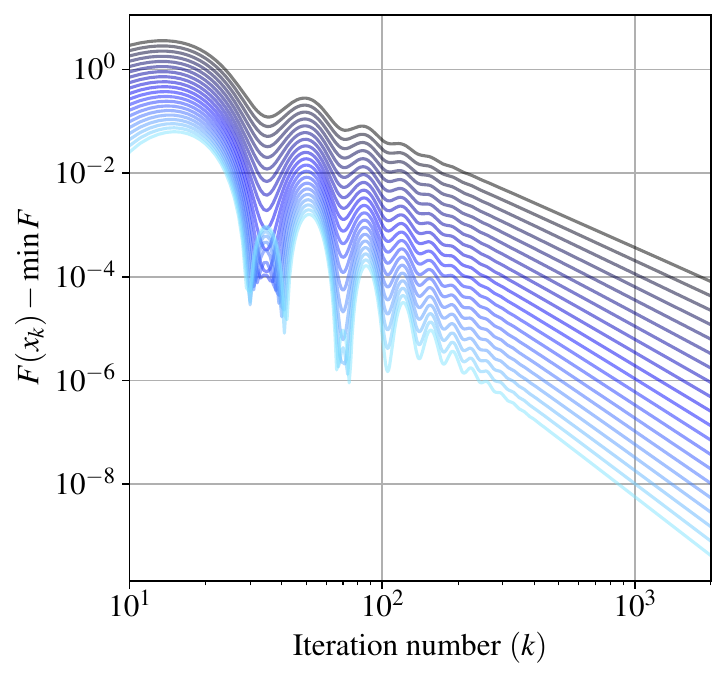}
    \caption{Inner objective residual}
    \label{fig:nem_obj_inner_delta}    
    \end{subfigure}\hspace{0.5cm}
    \begin{subfigure}{0.35\linewidth}
    \includegraphics[width=\linewidth]{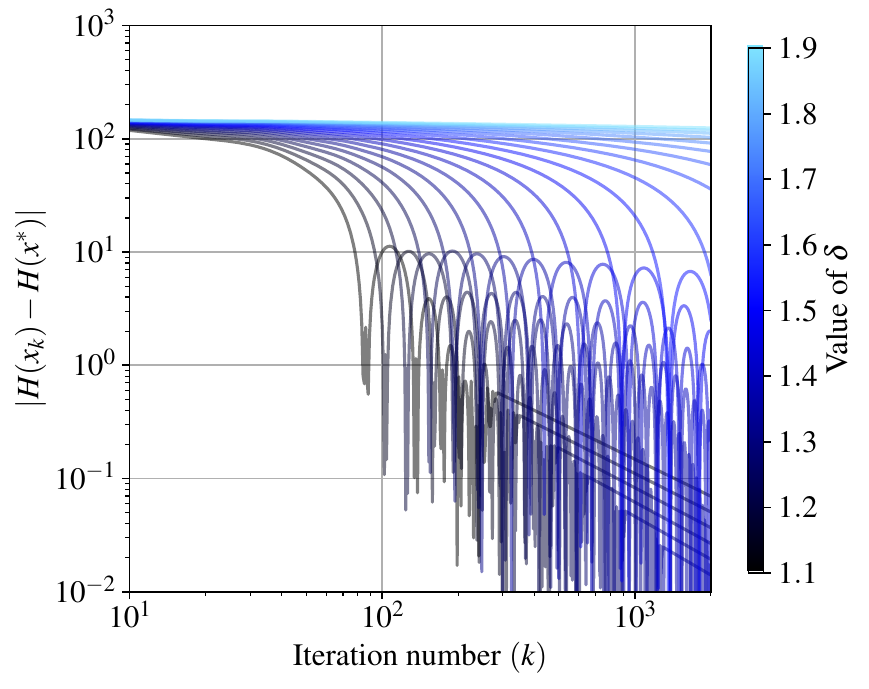}
    \caption{Outer objective residual}
    \label{fig:nem_obj_outer_delta}    
    \end{subfigure}
    \caption{Results of experiment in Section \ref{sec:num_nem}: Comparing residual decrease for Algorithm \ref{alg:second_order} as a function of the iteration number emphasizing the choice of $\delta \in (1, 2)$.}
    \label{fig:nem_delta}
\end{figure}

\subsection{Logistic regression}\label{sec:num_logistic}

In this experiment, we consider a copy of the UCI ML Breast Cancer Wisconsin (Diagnostic) dataset \cite{dataset} freely available online \cite{scikit11}. It contains $n=455$ instances with $30$ features and a label $y_i$ ($i=1,\dots, 455$) denoting weather the lesion is benign $y_i=1$ or malign $y_i=0$. We aim training a quadratic classifier, which for each sample $a \in \R^{30}$ yields $1$ if $\mathfrak{s}(\hat a^T x)>\frac{1}{2}$ and $0$ else, where $\hat a \in \R^{d}$ is the lifted feature vector of $a$ that now belongs to a $d=5456$ dimensional space, and $\mathfrak{s}(t):=(1 + \exp(-t))^{-1}$ is the standard sigmoid function. Due to overparametrization, among the optimal classifiers, we seek the one that minimizes the $\ell^1$ norm by solving the following bilevel problem
\begin{equation}
    \min_{x \in \R^d} \ \|x\|_1 \quad \text{subject to} \ x \in \argmin_{z\in \R^d} \ \frac{1}{n} \sum_{i=1}^n \bigg( y_i \log(\mathfrak{s}(\hat a_i^T z))  + (1-y_i)\log(1-\mathfrak{s}(\hat a_i^T z)) \bigg).
\end{equation}

To train the model, we run the three numerical algorithms presented in Subsection \ref{sec:num_compared_algorithms} and Algorithms \ref{alg:first_order} and \ref{alg:second_order} for $5\cdot 10^4$ iterations with the $\delta=1.9$ for the second-order methods, and $\delta/2 = 0.95$ for the first-order ones. Specifically, for Algorithm \ref{alg:second_order} we set $\gamma=\beta=1$, $\alpha=4$, $c=10^2$, $s=\frac{0.95}{L_{\nabla f}}$. For FBi-PG, $\alpha=4$, and $s=\frac{0.95}{L_{\nabla f}}$. For Algorithm \ref{alg:first_order} we set $c=10^2$, $s=\frac{1.95}{L_{\nabla f}}$ and $\beta=\gamma=1$, while for staBiM $\gamma =1$, $c=10^2$ and for Bi-SG-II, $s=\frac{1}{L_{\nabla f}}$, and $c=10^2$. Along the iterations, we measure the inner and outer objective values and the norm of the difference of consecutive iterates and show the results in Figure \ref{fig:logistic}. Since the optimal solution cannot be reached within a reasonable time, for the outer function we only show its function values, while for the inner we show the residual with the minimum value attained.

From Figure \ref{fig:logistic_obj_inner}, we observe that in this example, the convergence rate perfectly matches the one predicted by the theory, see Section \ref{sec:discussion}. The fast schemes (i.e., Algorithm \ref{alg:second_order} and FBi-PG) exhibit a convergence rate of \( o(k^{-\delta}) \), outperforming the non-accelerated ones (i.e., Algorithm \ref{alg:first_order}, staBiM and Bi-SG-II) that only exhibit a convergence rate of \( o(k^{-\delta/2}) \). Meanwhile, Figure \ref{fig:logistic_obj_outer} indicates that at least one of the considered methods may still be away from the optimal solution, and Figure \ref{fig:logistic_red} shows that the distance between consecutive iterates decreases similarly across all four algorithms.

\begin{figure}[t]
    \centering
    \begin{subfigure}{0.32\linewidth}
    \includegraphics[width=\linewidth]{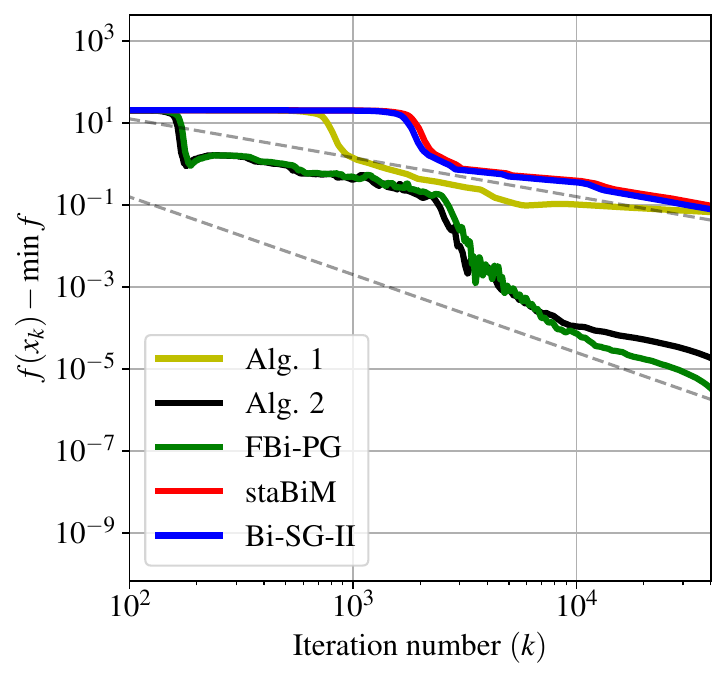}
    \caption{Inner function residual.}
    \label{fig:logistic_obj_inner}    
    \end{subfigure}
    \begin{subfigure}{0.31\linewidth}
    \includegraphics[width=\linewidth]{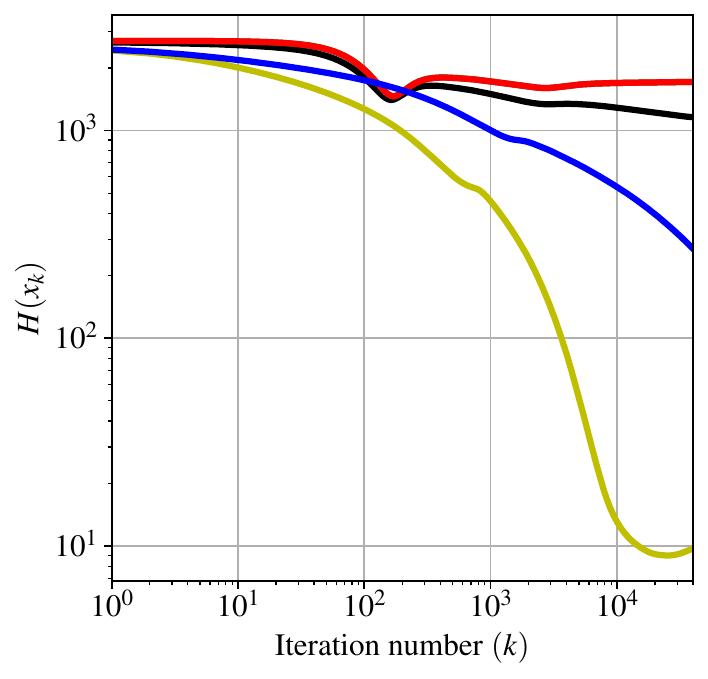}
    \caption{Outer function.}
    \label{fig:logistic_obj_outer}    
    \end{subfigure}
    \begin{subfigure}{0.32\linewidth}
    \includegraphics[width=\linewidth]{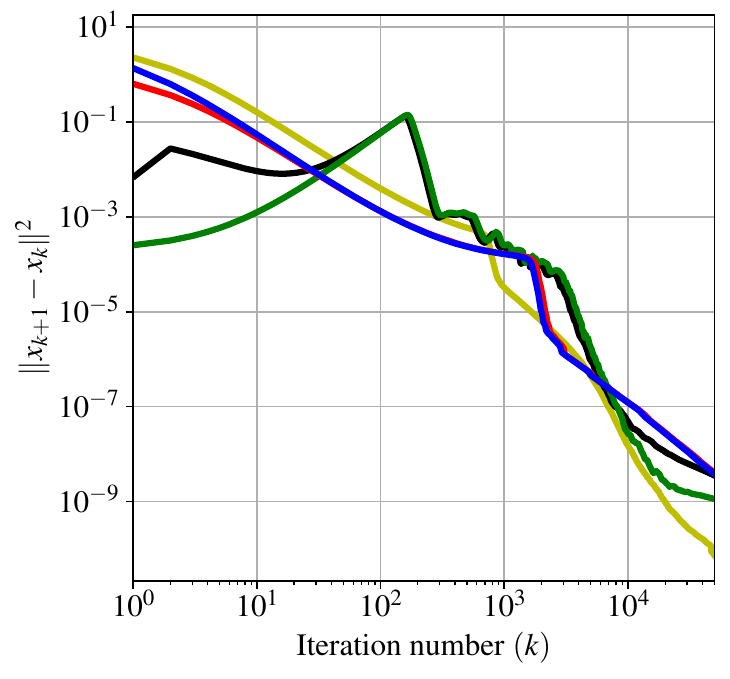}
    \caption{Distance of consecutive iterates.}
    \label{fig:logistic_red}    
    \end{subfigure}
    \caption{Results of experiment in Subsection \ref{sec:num_logistic}. The dotted lines in Figure \ref{fig:logistic_obj_inner} represent the rates $\mathcal{O}(k^{-\delta / 2})$ and $\mathcal{O}(k^{-\delta})$, with $\delta=1.9$.}
    \label{fig:logistic}
\end{figure}

\appendix

\section{Auxiliary results}

The following straightforward result encodes an important inequality that is contained, e.g., in \cite{ms23}, and is used repeatedly in our paper.
\begin{lem}\label{lem: Hölderian error bound implies the second condition of the lemma}
	Suppose Assumptions \ref{ass:holderian_growth} and \ref{ass:qualification} hold. Then for any solution $x^*\in \cH$ to \eqref{eq:bilevel_problem}, and $p^* \in N_{\argmin F}(x^*)$ such that $-p^* \in \partial H(x^*)$ it holds
	\begin{equation}\label{eq: Hölderian error bound implies the second condition of the lemma}
		H(x^{*}) - H(x)\leq  \|p^{*}\| \left(\frac{F(x) - F(x^{*})}{\tau \rho^{-1}}\right)^{\frac{1}{\rho}} \quad \text{for all} \ x \in \cH.
	\end{equation} 
\end{lem}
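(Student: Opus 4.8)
The plan is to bound the sign-less quantity $H(x^*)-H(x)$ by first converting it into a linear expression via subdifferential information at $x^*$, and then turning that linear bound into a power of the inner residual using the Hölderian growth condition. The two assumptions enter at distinct stages: the qualification condition (Assumption \ref{ass:qualification}) is what guarantees the existence of the vector $p^*$ with $p^*\in N_{\argmin F}(x^*)$ and $-p^*\in\partial H(x^*)$ simultaneously, while Assumption \ref{ass:holderian_growth} controls the geometry of $F$.

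First I would invoke the subgradient inequality for the convex function $H$ at the point $x^*$, using $-p^*\in\partial H(x^*)$, to obtain $H(x)\geq H(x^*)-\langle p^*,x-x^*\rangle$, equivalently $H(x^*)-H(x)\leq\langle p^*,x-x^*\rangle$. The next step is the key geometric maneuver: I would let $\bar x$ denote the projection of $x$ onto the nonempty, closed, convex set $\argmin F$, and split
\begin{equation*}
	\langle p^*,x-x^*\rangle=\langle p^*,x-\bar x\rangle+\langle p^*,\bar x-x^*\rangle.
\end{equation*}
Since $\bar x\in\argmin F$ and $p^*\in N_{\argmin F}(x^*)$, the defining inequality of the normal cone forces $\langle p^*,\bar x-x^*\rangle\leq 0$, so this term drops, and Cauchy--Schwarz yields $\langle p^*,x-x^*\rangle\leq\|p^*\|\,\|x-\bar x\|=\|p^*\|\,\dist(x,\argmin F)$.

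Finally, I would apply the Hölderian growth condition. Because $x^*$ solves the bilevel problem we have $x^*\in\argmin F$ and hence $F(x^*)=\min F$, so Assumption \ref{ass:holderian_growth} gives $\tau\rho^{-1}\dist(x,\argmin F)^\rho\leq F(x)-F(x^*)$, i.e.\ $\dist(x,\argmin F)\leq\big((F(x)-F(x^*))/(\tau\rho^{-1})\big)^{1/\rho}$. Chaining the three estimates produces exactly \eqref{eq: Hölderian error bound implies the second condition of the lemma}. I do not anticipate a serious obstacle here, as the argument is short; the only mildly delicate point is the middle step, where one must use the projection onto $\argmin F$ together with the normal-cone inequality to upgrade the raw linear bound $\langle p^*,x-x^*\rangle$ into the distance bound $\|p^*\|\,\dist(x,\argmin F)$, rather than attempting to bound $\|x-x^*\|$ directly (which would not close against the growth condition).
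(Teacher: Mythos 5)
Your proof is correct and follows essentially the same route as the paper: subgradient inequality for $H$ at $x^*$ with $-p^*\in\partial H(x^*)$, replacement of $x^*$ by $P_{\argmin F}(x)$ via the normal-cone inequality, Cauchy--Schwarz to get $\|p^*\|\dist(x,\argmin F)$, and finally the H{\"o}lderian growth bound with $F(x^*)=\min F$. The only difference is presentational: you spell out the decomposition $\langle p^*,x-x^*\rangle=\langle p^*,x-\bar x\rangle+\langle p^*,\bar x-x^*\rangle$ explicitly, whereas the paper chains the same inequalities in one line.
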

\begin{proof}
Using the definitions of the convex subdifferential and of the normal cone, for all $x\in\cH$ it holds
\begin{align}
	H(x^{*}) - H(x) &\leq -\langle p^{*}, x^{*} - x\rangle \leq -\bigl\langle p^{*}, P_{\argmin F}(x) - x\bigr\rangle \nonumber\\
	&\leq \| p^{*}\| \bigl\| P_{\argmin F}(x) - x\bigr\| = \| p^{*}\|  \dist(x, \argmin F) \leq \| p^{*}\| \left(\frac{F(x) - F(x^{*})}{\tau \rho^{-1}}\right)^{\frac{1}{\rho}},
\end{align}
which yields the claim.
\end{proof}

For the proof of the following classical lemma, see, e.g., \cite{sbc16}.
\begin{lem}\label{lem: proximal step}
Let $\hat{\Phi} : \cH \to \R \cup \{+\infty\}$ be a proper, convex and lower semicontinuous function and ${\Phi} : \cH \to \R$ be a $L_{\nabla \Phi}$-smooth function. For $s > 0$ and $y\in \mathcal{H}$, define 
	\begin{equation}\label{eq: definition of G_s}
		G(y) := \frac{1}{s} \Bigl[y - \prox_{s \hat{\Phi}}( y - s \nabla \Phi(y))\Bigr].
	\end{equation}
	Set $\Psi := \Phi + \hat{\Phi}$. Then, for all $x, y \in \mathcal{H}$ it holds
	\begin{equation}
		\Psi(y - sG(y)) \leq \Psi(x) - \frac{s}{2}(2 - sL_{\nabla \Phi}) \| G(y)\|^{2} + \langle G(y), y - x\rangle - \frac{1}{2L_{\nabla \Phi}} \| \nabla\Phi(x) - \Phi(y)\|^{2}.
	\end{equation}
\end{lem}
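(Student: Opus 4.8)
The plan is to write the right-hand point $p := \prox_{s \hat\Phi}(y - s\nabla\Phi(y)) = y - sG(y)$ and to assemble the estimate from three elementary ingredients: the descent lemma for the $L_{\nabla\Phi}$-smooth function $\Phi$, the co-coercivity refinement of convexity for $\Phi$, and the subgradient characterization of the proximal step (I read the last term of the claim as $\|\nabla\Phi(x) - \nabla\Phi(y)\|^2$). First I would record the optimality condition for $p$. By definition of the proximity operator, $\tfrac{1}{s}\big(y - s\nabla\Phi(y) - p\big) \in \partial\hat\Phi(p)$, and since $\tfrac{1}{s}(y - p) = G(y)$ this reads $G(y) - \nabla\Phi(y) \in \partial\hat\Phi(p)$. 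The subgradient inequality for the convex function $\hat\Phi$ then gives, for every $x \in \cH$,
\[
\hat\Phi(p) \leq \hat\Phi(x) + \langle G(y) - \nabla\Phi(y),\, p - x\rangle.
\]

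Next I would handle the smooth part. The descent lemma applied to $\Phi$ at $p$ about $y$, using $p - y = -sG(y)$, yields
\[
\Phi(p) \leq \Phi(y) - s\langle\nabla\Phi(y),\, G(y)\rangle + \frac{L_{\nabla\Phi} s^{2}}{2}\|G(y)\|^{2},
\]
while the sharp (co-coercivity-based) inequality for a convex $L_{\nabla\Phi}$-smooth function provides
\[
\Phi(y) \leq \Phi(x) + \langle\nabla\Phi(y),\, y - x\rangle - \frac{1}{2L_{\nabla\Phi}}\|\nabla\Phi(x) - \nabla\Phi(y)\|^{2},
\]
which is precisely the source of the final negative term in the claim. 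It is essential here to invoke this refinement rather than plain convexity, otherwise the gradient-difference term is lost.

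Finally I would add the three inequalities, write $\Psi = \Phi + \hat\Phi$, and expand $p - x = (y - x) - sG(y)$ in the first display. The only delicate point is the bookkeeping of the inner products: the coefficients of $y - x$ combine as $\langle\nabla\Phi(y) + (G(y) - \nabla\Phi(y)),\, y - x\rangle = \langle G(y),\, y - x\rangle$, producing the desired cross term, while the three terms carrying a factor $s$ collapse via $-s\langle\nabla\Phi(y), G(y)\rangle - s\langle G(y) - \nabla\Phi(y), G(y)\rangle = -s\|G(y)\|^{2}$. Adding the quadratic contribution $\tfrac{L_{\nabla\Phi} s^{2}}{2}\|G(y)\|^{2}$ then produces the coefficient $-\tfrac{s}{2}(2 - sL_{\nabla\Phi})\|G(y)\|^{2}$, and the surviving term $-\tfrac{1}{2L_{\nabla\Phi}}\|\nabla\Phi(x) - \nabla\Phi(y)\|^{2}$ completes the stated estimate. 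I expect no genuine obstacle beyond this cancellation; the entire difficulty is organizational, ensuring that the gradient-difference term is retained and that the $G(y)$-linear terms telescope cleanly.
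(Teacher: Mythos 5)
Your proof is correct and is precisely the classical argument behind this lemma, which the paper does not reproduce but only cites (see \cite{sbc16}): the prox optimality condition $G(y)-\nabla\Phi(y)\in\partial\hat\Phi(y-sG(y))$, the descent lemma, and the co-coercivity refinement of convexity, combined exactly as you combine them, with the bookkeeping of the $G(y)$-terms checking out. Your reading of the final term as $\|\nabla\Phi(x)-\nabla\Phi(y)\|^{2}$ (correcting the paper's typo) is the right one, and since the paper's definition of $L$-smooth includes convexity, your invocation of the refined inequality is justified.
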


The following result follow from simple integral estimates. 
\begin{lem}\label{lem: sums}
Let $r\in \R$. Then, for all $k_{0} \geq 2$ and all $k\geq k_{0}$ it holds
    \begin{itemize}
        \item If $r < 0$, then $\sum_{\ell = k_{0}}^{k} \frac{1}{\ell^{r}} \in \left[\frac{1}{1 - r} \Bigl(k^{1 - r} - (k_{0} - 1)^{1 - r}\Bigr), \frac{1}{1 - r} \Big( (k + 1)^{1 - r} - k_{0}^{1 - r}\Bigr)\right].$
        \item If $r = 1$, then $ \sum_{\ell = k_{0}}^{k} \frac{1}{\ell} \in \Bigl[ \ln(k + 1) - \ln(k_{0}), \ln(k) - \ln(k_{0} - 1)\Bigr].$
        \item If $r\geq 0$ and $r\neq 1$, then $\sum_{\ell = k_{0}}^{k} \frac{1}{\ell^{r}} \in \left[ \frac{1}{1 - r} \left( \frac{1}{(k + 1)^{r - 1}} - \frac{1}{k_{0}^{r - 1}}\right), \frac{1}{1 - r} \left( \frac{1}{k^{r - 1}} - \frac{1}{(k_{0} -  1)^{r - 1}}\right)\right].$
        \item In particular, if $1 - r > 0$, then $\sum_{k = k_0}^{\ell} \frac{1}{\ell^{r}} \leq \frac{1}{1 - r} \Bigl[(k + 1)^{1 - r} - (k_{0} - 1)^{1 - r}\Bigr].$
    \end{itemize}
\end{lem}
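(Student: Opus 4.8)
The plan is to compare the sum $\sum_{\ell=k_0}^k \ell^{-r}$ with the integral of $g(x) := x^{-r}$ by the standard integral test, being careful about the monotonicity of $g$, which depends on the sign of $r$. For $r > 0$ the function $g$ is strictly decreasing on $(0, +\infty)$, whereas for $r < 0$ it is strictly increasing, and for $r = 0$ it is constant. This dichotomy is precisely what distinguishes the first bullet ($r < 0$) from the third ($r \geq 0$, $r \neq 1$), since it flips the direction of the elementary one-step estimates.

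First, for a decreasing $g$ and each integer $\ell$, I would use monotonicity on the unit intervals $[\ell, \ell+1]$ and $[\ell-1, \ell]$ to obtain $\int_\ell^{\ell+1} g(x)\,dx \leq g(\ell) \leq \int_{\ell-1}^\ell g(x)\,dx$. Summing over $\ell = k_0, \dots, k$ telescopes the integrals and yields $\int_{k_0}^{k+1} g(x)\,dx \leq \sum_{\ell=k_0}^k g(\ell) \leq \int_{k_0-1}^k g(x)\,dx$. For an increasing $g$, that is, the case $r < 0$, the same argument gives the reversed chain $\int_{k_0-1}^k g(x)\,dx \leq \sum_{\ell=k_0}^k g(\ell) \leq \int_{k_0}^{k+1} g(x)\,dx$.

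Next, I would evaluate the integrals explicitly. For $r \neq 1$ one has $\int x^{-r}\,dx = \frac{x^{1-r}}{1-r}$, and substituting the endpoints into the two chains above produces precisely the stated intervals for the cases $r < 0$ and $r \geq 0$, $r \neq 1$, after rewriting $x^{1-r} = 1/x^{r-1}$ in the latter. For $r = 1$ the antiderivative is $\ln x$, and the decreasing-$g$ chain gives the logarithmic interval directly. The case $r = 0$ is degenerate, as the interval collapses to a single point, but both bounds still hold since $g \equiv 1$ is simultaneously non-increasing and non-decreasing.

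Finally, the ``in particular'' assertion follows from the derived upper bounds by monotonicity in the endpoints whenever $1 - r > 0$. When $0 \leq r < 1$, the upper bound $\frac{1}{1-r}\bigl(k^{1-r} - (k_0-1)^{1-r}\bigr)$ only grows if $k^{1-r}$ is replaced by $(k+1)^{1-r}$, since $x \mapsto x^{1-r}$ is increasing; when $r < 0$, the upper bound $\frac{1}{1-r}\bigl((k+1)^{1-r} - k_0^{1-r}\bigr)$ only grows if $-k_0^{1-r}$ is replaced by $-(k_0-1)^{1-r}$, again because $1 - r > 0$. Either way the unified bound $\frac{1}{1-r}\bigl[(k+1)^{1-r} - (k_0-1)^{1-r}\bigr]$ is obtained. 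The only real care needed throughout is tracking the direction of each inequality against the sign of $r$ and of $1 - r$; I expect no substantive obstacle beyond this bookkeeping.
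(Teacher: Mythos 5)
Your proof is correct and follows exactly the route the paper intends: the paper gives no detailed argument, stating only that the lemma ``follows from simple integral estimates,'' which is precisely your comparison of $\sum_{\ell=k_0}^k \ell^{-r}$ with $\int x^{-r}\,dx$ over unit intervals, with the direction of the bounds dictated by the monotonicity of $x\mapsto x^{-r}$. Your handling of the degenerate case $r=0$ and the endpoint monotonicity argument for the final ``in particular'' bound are both sound.
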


A special case (i.e., when $t_{k} = k$) of the following lemma can be found in the literature, for example, in \cite[Lemma A.5]{fastOGDA}. However, that version requires the sequence $(\varphi_{k})_{k \geq 0}$ to be bounded, which we cannot assume. This is important, since when applying this lemma we do not know beforehand if $(\varphi_{k})_{k \geq 0}$ is bounded or not.  
\begin{lem}\label{lem: for the existence of lim|xk-x*|}
    Let $(\varphi_{k})_{k \geq 0}, (t_{k})_{k \geq 0}$ be sequences of real and positive numbers respectively, and such that $\sum_{k = 0}^{+\infty} \frac{1}{t_{k}} = +\infty$. Then,
    \begin{equation}
    \lim_{k\to +\infty} \left( \varphi_{k + 1} + t_k (\varphi_{k + 1} - \varphi_{k})\right) = L \in\R  \quad \implies \quad \lim_{k\to +\infty} \varphi_{k} = L. 
    \end{equation} 
\end{lem}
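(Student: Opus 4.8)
The plan is to rewrite the hypothesis as a weighted-averaging recursion and reduce the claim to a Toeplitz-type (null weighted average) argument. First I would set $a_k := \varphi_{k+1} + t_k(\varphi_{k+1} - \varphi_k)$, so that by assumption $a_k \to L$, and solve for $\varphi_{k+1}$ to obtain
\[
\varphi_{k+1} = \mu_k \varphi_k + \nu_k a_k, \quad \text{where} \quad \mu_k := \frac{t_k}{1+t_k} \in (0,1), \quad \nu_k := \frac{1}{1+t_k} = 1 - \mu_k .
\]
Introducing $\psi_k := \varphi_k - L$ and $\epsilon_k := a_k - L \to 0$, a direct computation using $\mu_k + \nu_k = 1$ turns this into the recursion $\psi_{k+1} = \mu_k \psi_k + \nu_k \epsilon_k$, so it suffices to prove $\psi_k \to 0$.

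Next I would unroll the recursion into
\[
\psi_{k+1} = \Bigl(\prod_{j=0}^{k}\mu_j\Bigr)\psi_0 + \sum_{i=0}^{k} w_{i,k}\,\epsilon_i, \quad w_{i,k} := \nu_i\prod_{j=i+1}^{k}\mu_j \ge 0,
\]
and record the telescoping identity $w_{i,k} = \prod_{j=i+1}^{k}\mu_j - \prod_{j=i}^{k}\mu_j$, whose sum collapses to $\sum_{i=0}^{k} w_{i,k} = 1 - \prod_{j=0}^{k}\mu_j \le 1$. The crucial analytic input is that $\prod_{j=0}^{k}\mu_j \to 0$: since $\mu_j = 1 - \tfrac{1}{1+t_j}$, this is equivalent to $\sum_j \tfrac{1}{1+t_j} = +\infty$, which I would derive from the standing hypothesis $\sum_j \tfrac{1}{t_j} = +\infty$ using $\tfrac{1}{1+t_j} \ge \tfrac{1}{2\max(1,t_j)}$ and distinguishing whether $t_j < 1$ holds for infinitely many $j$. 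This fact simultaneously sends the first term $(\prod_{j=0}^{k}\mu_j)\psi_0$ to zero and shows $w_{i,k} \to 0$ for each fixed $i$.

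Finally I would bound the weighted sum by the standard $\eta/2$-splitting: given $\eta > 0$, choose $N$ with $|\epsilon_i| < \eta/2$ for $i \ge N$; the tail obeys $\sum_{i=N}^{k} w_{i,k}|\epsilon_i| \le \tfrac{\eta}{2}\sum_{i=0}^{k} w_{i,k} \le \tfrac{\eta}{2}$ by the total-weight identity, while the fixed head $\sum_{i=0}^{N-1} w_{i,k}|\epsilon_i| \to 0$ as $k \to \infty$ because it has finitely many terms, each carrying a factor $\prod_{j=i+1}^{k}\mu_j \to 0$. Hence the sum vanishes, giving $\psi_k \to 0$ and thus $\varphi_k \to L$. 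The only delicate point is the divergence $\sum_j \tfrac{1}{1+t_j} = +\infty$ under $\sum_j \tfrac{1}{t_j} = +\infty$, genuinely needed since $(t_k)$ is neither assumed bounded nor bounded away from zero; once the product-to-zero fact is secured, the averaging argument is routine, and — in line with the remark preceding the statement — it never uses boundedness of $(\varphi_k)$.
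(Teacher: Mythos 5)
Your proposal is correct, and it takes a genuinely different route from the paper. The paper introduces a recursive integrating factor $\mu_k$ (with $\mu_0 = 1/t_0$, $\mu_{k+1} = \mu_k(1+t_k)/t_{k+1}$) so that multiplying the hypothesis by $\mu_k$ makes it telescope into $\mu_{k+1}t_{k+1}\varphi_{k+1} - \mu_k t_k \varphi_k = \mu_k \ell_k$; summing gives $\varphi_{k+1} = \varphi_0/P_{k+1} + \bigl(\sum_{i\le k}\mu_i \ell_i\bigr)/\bigl(\sum_{i \le k}\mu_i + 1\bigr)$ with $P_{k+1} = \prod_{j=0}^k (1+1/t_j) \ge 1 + \sum_{j=0}^k 1/t_j \to +\infty$, and the conclusion follows from the Stolz--Ces\'aro theorem. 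You instead solve the recursion explicitly, $\psi_{k+1} = \bigl(\prod_{j=0}^k \mu_j\bigr)\psi_0 + \sum_{i=0}^k w_{i,k}\epsilon_i$, and verify by hand (total weight $\le 1$ via telescoping, pointwise decay of the kernel, $\eta/2$-splitting) that this defines a regular summability method --- in effect you reprove the Stolz--Ces\'aro-type averaging fact rather than cite it, which makes your argument more self-contained at the cost of length. The two proofs share the same algebraic core: your $\prod_{j=0}^k \mu_j$ equals the paper's $1/P_{k+1}$, and both hinge on the same divergence. One simplification worth noting: your handling of the key fact via $\sum_j \tfrac{1}{1+t_j} = +\infty$ and a case distinction on whether $t_j < 1$ infinitely often is correct but can be shortcut, since directly
\begin{equation*}
\prod_{j=0}^{k}\mu_j = \prod_{j=0}^{k}\frac{t_j}{1+t_j} = \frac{1}{\prod_{j=0}^{k}\bigl(1 + \tfrac{1}{t_j}\bigr)} \le \frac{1}{1 + \sum_{j=0}^{k}\tfrac{1}{t_j}} \longrightarrow 0,
\end{equation*}
which is exactly the one-line estimate the paper uses for $P_{k+1}$. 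Like the paper's proof, yours correctly avoids any boundedness assumption on $(\varphi_k)_{k\ge 0}$, which is the point the paper emphasizes in distinguishing this lemma from its literature antecedent.
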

\begin{proof}
For all $k \geq 0$, denote $\ell_k:= \varphi_{k+1} + t_k (\varphi_{k+1}-\varphi_k)$ and suppose that $\ell_k\to L$ as $k \to +\infty$. Consider the positive sequence $(\mu_k)_{k \geq 0}$ defined recursively as
	\begin{equation}\label{eq:proof_phi_converges_2}
		\mu_0 := \frac{1}{t_0} \quad \mbox{and} \quad \mu_{k+1} := \frac{\mu_k(1+t_k)}{t_{k+1}} \quad \text{for all} \ k \geq 0.
	\end{equation}
We denote by $P_k:= \mu_k t_k$ for all $k \geq 0$. Then, $P_0 = 1$ and for all $k \geq 0$
	\begin{equation}\label{eq:proof_phi_converges_3}
		P_k >0, \quad \sum_{i=0}^k \mu_i = P_{k+1}-1 \quad \text{and} \quad
		P_{k+1} = \prod_{j=0}^k \bigg(\frac{1}{t_j} + 1\bigg) \geq 1 + \sum_{j = 0}^k \frac{1}{t_j},  \quad \text{thus} \quad P_k \to +\infty,
	\end{equation}
	where the first three statements follow from \eqref{eq:proof_phi_converges_2}, and the fourth statement from the summability assumption on $(t_k)_{k \geq 0}$. Multiplying $\ell_k$ by $\mu_k$ yields
	\begin{equation}\label{eq:proof_phi_converges_1}
		\begin{aligned}
			\mu_{k+1}t_{k+1}\varphi_{k+1} - \mu_k t_k \varphi_{k} &= (\mu_{k+1}t_{k+1}- \mu_{k}t_k)\varphi_{k+1} + \mu_k t_k (\varphi_{k+1}-\varphi_k)\\
			& =\mu_k \varphi_{k+1} + \mu_k t_k (\varphi_{k+1}-\varphi_k) = \mu_k \ell_k.
		\end{aligned}
	\end{equation}
	Therefore, by summing \eqref{eq:proof_phi_converges_1} from $0$ to $k$, we obtain for all $k \geq 0$
	\begin{equation}
		\varphi_{k+1} = \frac{\varphi_0}{P_{k+1}} + \frac{1}{P_{k+1}} \sum_{i = 0}^k \mu_i \ell_i = \frac{\varphi_0}{P_{k+1}} + \frac{\sum_{i = 0}^k \mu_i \ell_i}{\sum_{i=0}^k \mu_i + 1}.
	\end{equation}
	In light of \eqref{eq:proof_phi_converges_3}, applying the Stolz--Ces\'aro Theorem leads to $\varphi_{k}\to L$ as $k \to +\infty$. 
\end{proof}

Opial's Lemma \cite{Opial} is used in the proof of the weak convergence of the sequences generated by our algorithms.
\begin{lem}[Opial's Lemma, discrete version]\label{lem: discrete Opial}
	Let $S$ be a nonempty subset of $\mathcal{H}$ and let $(x_{k})_{k \geq 0}$ be a sequence in $\mathcal{H}$. Assume that:
	\begin{enumerate}
		\item For every $x^{*} \in S$, $\lim_{k\to +\infty} \| x_{k} - x^{*}\|$ exists;
		\item Every weak sequential cluster point of $(x_{k})_{k \geq 0}$ belongs to $S$.
	\end{enumerate}
	Then, $(x^k)_{k \geq 0}$ converges weakly to an element in $S$.
\end{lem}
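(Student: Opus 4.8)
The plan is the classical two-step argument for Opial-type results: first show that $(x_k)_{k\geq 0}$ is bounded and therefore possesses weak sequential cluster points, and then show that it can have at most one such cluster point, which by hypothesis lies in $S$.

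First I would exploit hypothesis (1). Since $S\neq\emptyset$, fix any $x^{*}\in S$; then $\lim_{k\to +\infty}\|x_k-x^{*}\|$ exists and is finite, so $(\|x_k-x^{*}\|)_{k\geq 0}$ is bounded and hence so is $(x_k)_{k\geq 0}$. As $\mathcal{H}$ is a Hilbert space, and thus reflexive, bounded sequences are weakly sequentially precompact, so $(x_k)_{k\geq 0}$ admits at least one weak sequential cluster point; by hypothesis (2), any such cluster point belongs to $S$.

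The heart of the argument is uniqueness of the weak cluster point. Suppose $x_1^{\diamond}$ and $x_2^{\diamond}$ are two weak sequential cluster points, attained along subsequences $x_{k_j}\rightharpoonup x_1^{\diamond}$ and $x_{m_i}\rightharpoonup x_2^{\diamond}$; both belong to $S$ by (2). Expanding the squared norms gives the identity
\begin{equation*}
	\|x_k - x_1^{\diamond}\|^2 - \|x_k - x_2^{\diamond}\|^2 = \|x_1^{\diamond}\|^2 - \|x_2^{\diamond}\|^2 - 2\langle x_k, x_1^{\diamond} - x_2^{\diamond}\rangle.
\end{equation*}
By hypothesis (1) applied to the two points $x_1^{\diamond}, x_2^{\diamond}\in S$, the left-hand side converges as $k\to +\infty$, so $\langle x_k, x_1^{\diamond} - x_2^{\diamond}\rangle$ converges to some $L\in\R$. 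Passing to the limit along the two subsequences and using weak convergence yields $L = \langle x_1^{\diamond}, x_1^{\diamond} - x_2^{\diamond}\rangle = \langle x_2^{\diamond}, x_1^{\diamond} - x_2^{\diamond}\rangle$, and subtracting these gives $\|x_1^{\diamond} - x_2^{\diamond}\|^2 = 0$, i.e. $x_1^{\diamond} = x_2^{\diamond}$.

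Finally, a bounded sequence with a unique weak sequential cluster point $x^{\diamond}\in S$ must converge weakly to it: every subsequence has, by boundedness, a further weakly convergent subsequence, whose limit is a weak cluster point of the original sequence and therefore equals $x^{\diamond}$; this forces $x_k\rightharpoonup x^{\diamond}$. This being a classical result, I do not anticipate a genuine obstacle; the only point deserving care is the last implication, which rests on weak sequential compactness in the reflexive space $\mathcal{H}$ rather than on any metrizability of the weak topology.
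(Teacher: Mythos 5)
Your proof is correct and complete: boundedness via hypothesis (1), the polarization-type identity showing $\langle x_k, x_1^{\diamond}-x_2^{\diamond}\rangle$ converges, evaluation of that limit along the two subsequences to force $x_1^{\diamond}=x_2^{\diamond}$, and the sub-subsequence argument to upgrade uniqueness of the weak cluster point to weak convergence of the whole sequence. The paper itself gives no proof of this lemma---it is stated as a classical result with a citation to Opial's original work---and your argument is exactly the standard one found in that literature, so there is nothing to compare beyond noting that your reconstruction is faithful; the one point you flag yourself (that the final step rests on weak sequential compactness of bounded sets in the Hilbert space $\mathcal{H}$, not on metrizability of the weak topology) is indeed the only place where care is needed, and you handle it correctly.
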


\begin{lem}\label{lem: to improve the rate for Fk, discrete}
Assume that $t_{k} := \theta(k + \gamma)$, with $\theta > 0$ and $\gamma \geq 0$, and that $(F_{k})_{k \geq 0}$ is a sequence of nonnegative numbers such that, for some $k_0 \geq 0$,
\begin{equation}
F_{k} \leq C_{E} t_k^{-\eta} + C_{0, F} t_{k}^{- \delta} F_{k}^{\frac{1}{\rho}} \quad \text{for all} \ k \geq k_0, \quad \mbox{and} \quad F_{k} = \mathcal{O}(k^{- \delta}) \quad \text{as} \ k\to +\infty,
	\end{equation}
where $\rho\in (1, 2]$, $\eta\in\{1, 2\}$, $ \delta\in \big(\frac{\eta}{\rho^\star}, \eta \big)$ (where $\rho^{\star}$ is set as in \eqref{eq:def_rho_star}), and $C_{E}$ and $C_{0, F}$ are positive constants. Then, it holds 
	\begin{equation}
		F_{k} = \mathcal{O}( k^{-\eta}) \quad \text{as} \ k\to +\infty.
	\end{equation}
\end{lem}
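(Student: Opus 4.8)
The plan is to run a bootstrapping argument on the rate exponent, using the recursive inequality to repeatedly upgrade the decay of $(F_k)$ until it reaches the target exponent $\eta$. The engine is a single rate-improvement step: if $F_k = \mathcal{O}(k^{-a})$ for some $a \geq \delta$, say $F_k \leq M k^{-a}$ for $k$ large, then since $t_k = \theta(k+\gamma) \geq \theta k$ and $s \mapsto s^{1/\rho}$ is nondecreasing, substituting into the standing inequality gives
\begin{equation*}
	F_k \leq C_E \theta^{-\eta} k^{-\eta} + C_{0,F}\,\theta^{-\delta} M^{1/\rho}\, k^{-(\delta + a/\rho)}.
\end{equation*}
Hence $F_k = \mathcal{O}(k^{-a'})$ with $a' := \min\{\eta,\ \delta + a/\rho\}$. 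Thus one application of the inequality maps the admissible exponent $a$ through $g(a) := \delta + a/\rho$, truncated at $\eta$. Note that only the lower bound $t_k \geq \theta k$ is needed, since all $t_k$-powers enter with negative exponents.

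Next I would iterate this step. Starting from $a_0 := \delta$ (valid by hypothesis), set $a_{n+1} := \min\{\eta,\ \delta + a_n/\rho\}$. The map $g$ is increasing with slope $1/\rho \in [\tfrac12, 1)$ (as $\rho \in (1,2]$) and has unique fixed point $\delta\rho^\star$, using $1/\rho^\star = 1 - 1/\rho$. The assumption $\delta > \eta/\rho^\star$ is exactly $\delta\rho^\star > \eta$, so this fixed point lies strictly above the target. Consequently, as long as $a_n \leq \eta$ and no truncation has occurred, the increment admits the uniform lower bound
\begin{equation*}
	a_{n+1} - a_n = \delta - \frac{a_n}{\rho^\star} \geq \delta - \frac{\eta}{\rho^\star} =: \kappa > 0,
\end{equation*}
so after at most $\lceil (\eta - \delta)/\kappa \rceil$ steps the candidate $g(a_n)$ overshoots $\eta$, the truncation activates, and $a_N = \eta$ is attained for some finite $N$. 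Each of these finitely many steps is a legitimate instance of the rate-improvement step above (with a new but still finite $\mathcal{O}$-constant, obtained by multiplying the previous one by a fixed factor and raising it to the power $1/\rho$), so chaining them yields $F_k = \mathcal{O}(k^{-\eta})$, as claimed.

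The hard part—and the only place the hypothesis $\delta \in (\eta/\rho^\star, \eta)$ is genuinely used—is guaranteeing that the bootstrap \emph{terminates} at the exponent $\eta$ in finitely many steps, rather than merely converging to the fixed point $\delta\rho^\star$ from below. This is precisely what $\delta\rho^\star > \eta$ secures: the fixed point overshoots the target, and the displayed lower bound $\kappa = \delta - \eta/\rho^\star > 0$ forbids the exponents from stalling before they cross $\eta$. Had $\delta \leq \eta/\rho^\star$ instead, the same scheme would stabilize at $F_k = \mathcal{O}(k^{-\delta\rho^\star})$, consistent with the boundary of the admissible region in Figure~\ref{fig:geometric_setting}. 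The remaining bookkeeping—tracking $\mathcal{O}$-constants through the iterations and verifying $a_n \leq \eta$ is preserved until truncation—is routine once the finiteness of $N$ is established.
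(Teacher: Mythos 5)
Your proposal is correct and follows essentially the same bootstrapping argument as the paper: both iterate the recursive inequality to upgrade the decay exponent via $a \mapsto \delta + a/\rho$, starting from $a_0 = \delta$, and both use $\delta\rho^\star > \eta$ (equivalent to $\delta > \eta/\rho^\star$) to guarantee the iteration crosses $\eta$ in finitely many steps. The only cosmetic difference is that the paper tracks the explicit geometric sum $\delta_n = \delta(1 + \rho^{-1} + \cdots + \rho^{-n}) \to \delta\rho^\star$, whereas you bound each increment below by $\kappa = \delta - \eta/\rho^\star > 0$; these are two phrasings of the same termination argument.
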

\begin{proof}
	Since $F_k=O(k^{- \delta})$ as $k\to+\infty$, there is a positive constant $C_{1, F}$ such that for all $k\geq k_{0}$ it holds
	\begin{equation}
	F_k \leq C_{E}t_k^{-\eta} + C_{0, F} t_k^{- \delta - \frac{1}{\rho} \delta}(t_k^{\delta}F_k)^{\frac{1}{\rho}} \leq C_{E}t_k^{-\eta} + C_{1, F} t_k^{- \delta - \frac{1}{\rho} \delta} .
	\end{equation} 
	If $\delta_1 := \delta + \frac{\delta}{\rho} \geq \eta$, we stop. Otherwise, the rate has improved to $F_k = \mathcal{O}(k^{-\delta_1})$ as $k\to+\infty$. We may repeat this first step again, but this time plugging $t_k^{\delta_1}$ instead of $t_k^{\delta}$, possibly improving the rate to $F_{k} = \mathcal{O}(k^{-\delta_{2}})$, where $\delta_{2} = \delta_{1} + \frac{\delta_{1}}{\rho} = \delta(1 + \rho^{-1} + \rho^{-2})$. We may repeat this argument $n$ times, as long as $\delta_{n+1}:=\delta_n + \frac{\delta_n}{\rho} < \eta$, in order to obtain a positive constant $C_{n, F}$ such that for all $k\geq k_{0}$
	\begin{equation}
		F_k \leq C_{E} t_k^{-\eta} +  C_{n, F} t_k^{- \delta_{n}}, \quad \text{where} \quad  \delta_{n} := \delta (1 + \rho^{-1} +\dots +\rho^{-n}). 
	\end{equation}
	Since $\delta_{n} \to \delta \rho^{\star} > \eta$ as $n\to +\infty$, we will stop at some $n_{0}$ such that $\delta_{n_{0} - 1} < \eta$ and $\delta_{n_{0}} \geq \eta$, which yields the claim.
\end{proof}

\begin{lem}\label{lem: general descent lemma for discrete case}
Let $\hat \delta >0$, $\eta \in \{1, 2\}$, and set $\delta := \eta \hat \delta$. Consider a sequence $(E_{k})_{k \geq 1}$ defined as 
\[
    E_{k} = t_{k}^{\eta} \Big(F_{k} + \varepsilon_{k - 1} H_{k}\Big) + V_{k} \quad \text{for all} \ k \geq 1,
\]
with $t_{k} = \theta(k + \gamma)$, $(\varepsilon_{k})_{k\geq 0}$ defined as in \eqref{eq:def_epsilon}, and $F_k \geq 0$, $H_{k} \geq -H_{*}$, where $H_* \geq 0$, and $V_k \geq 0$ for all $k \geq k_0,$ for some $k_0\geq1$. Suppose that
	\begin{equation}\label{eq: general inequality for E(k+1)-E(k), lemma, discrete_appendix}
		E_{k + 1} - E_{k} + \zeta_{k, \delta} H_{k} + g_{k} \leq -C_{2} k^{\eta - 1} F_{k} \quad \text{for all} \ k \geq k_0,
	\end{equation}
	 where $(g_k)_{k \geq 1}$ is a nonnegative sequence, $C_2 \geq 0$, and $(\zeta_{k, \delta})_{k \geq 1}$ satisfies, for $C_{1, \zeta} > 0$, $C_{2, \zeta} > 0$,
	\begin{equation}\label{eq:general descent lemma for discrete case, assumption on theta}
	C_{1, \zeta} k^{\eta-1} \varepsilon_{k - 1} \leq \zeta_{k, \delta} \leq C_{2, \zeta} k^{\eta-1} \varepsilon_k \quad \text{for all} \ k \geq k_0.
	\end{equation}
	Consider the following conditions, which will only be assumed if we explicitly say so. For all $k\geq k_0$: 
	\begin{align*}
		(\labterm{eq: Hölderian error bound condition lemma, discrete}{C1}) & \quad -H_{k} \leq C_{3} F_{k}^{\frac{1}{\rho}}, \ \text{for some constant $C_{3} \geq 0$ and $1 < \rho \leq 2$. Set $\rho^\star$ as in \eqref{eq:def_rho_star}}, \\
        (\labterm{eq: Attouch-Czarnecki condition lemma, discrete}{C2}) & \quad \text{For each $r > 0$,} \ -k^{\eta - 1} \big( F_{k} + r \varepsilon_k H_{k}\big) \leq I_{k}^{r}, \ \text{where $I_{k}^{r} \geq 0$ and $(I_{k}^{r})_{k \geq 1} \in \ell^{1}(\N)$}.
	\end{align*}
	Then, following statements are true:
	\begin{itemize}
		\item Case $\hat \delta > 1$: It holds $F_{k}=\mathcal{O} (k^{-\eta} )$ as $k \to +\infty$;
		\item Case $\hat \delta = 1$: If Assumption \eqref{eq: Attouch-Czarnecki condition lemma, discrete} holds and $C_{2} > 0$ in \eqref{eq: general inequality for E(k+1)-E(k), lemma, discrete_appendix}, then
		\begin{enumerate}
			\item\label{item:general_descent_lemma_1_lim} The limit $\lim_{k\to +\infty} E_{k}$ exists;
			\item\label{item:general_descent_lemma_1_F} $F_k = \mathcal{O} ( k^{-\eta} )$ as $k\to +\infty$;
			\item\label{item:general_descent_lemma_1_sum} The sequences $(k^{\eta - 1} F_{k})_{k \geq 1}$, $\left( k^{\eta - 1}\varepsilon_{k - 1} H_{k}\right)_{k \geq 1}$ and $(g_{k})_{k \geq 1}$ are summable.
		\end{enumerate}
		\item Case $\frac{1}{\rho^\star} < \hat \delta < 1$: If Assumptions \eqref{eq: Hölderian error bound condition lemma, discrete} and \eqref{eq: Attouch-Czarnecki condition lemma, discrete} hold and $C_{2} > 0$ in \eqref{eq: general inequality for E(k+1)-E(k), lemma, discrete_appendix}, then
		\begin{enumerate}
			\item\label{item:general_descent_lemma_0_F} $F_{k} = \mathcal{O}( k^{-\eta} )$ as $k\to+\infty$;
			\item\label{item:general_descent_lemma_0_lower_bound} It holds $-C_H k^{-\delta + \frac{\eta}{\rho^\star}}\leq t_{k}^{\eta} \varepsilon_{k - 1} H_{k}$ for some $C_H\geq 0$ and all $k \geq k_0$;
			\item\label{item:general_descent_lemma_0_lim} The limit $\lim_{k\to +\infty} E_{k}$ exists;
			\item\label{item:general_descent_lemma_0_sum} The sequences $ (k^{\eta - 1} F_{k})_{k\geq 1}$, $\left(k^{\eta - 1}\varepsilon_{k - 1} H_{k}\right)_{k\geq 1}$ and $ (g_{k})_{k\geq 1}$ are summable.
		\end{enumerate} 
	\end{itemize}
\end{lem}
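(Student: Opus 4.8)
The plan is to treat $(E_k)$ as an almost‑nonincreasing (quasi‑Fej\'er) sequence and to bootstrap from crude bounds to the sharp rate, the existence of $\lim_k E_k$, and the summability statements. I will use repeatedly the two‑sided control \eqref{eq:general descent lemma for discrete case, assumption on theta} on $\zeta_{k,\delta}$, the elementary asymptotics $t_k^{\eta}\varepsilon_{k-1}\simeq k^{\eta-\delta}$ and $k^{\eta-1}\varepsilon_k\simeq k^{\eta-1-\delta}$ (recall $\delta=\eta\hat\delta$), and the integral estimates of Lemma \ref{lem: sums}. The common first step is to bound $E_k$ from above. In the case $\hat\delta>1$ no extra hypothesis is available, so I drop the nonnegative terms $g_k$ and $C_2 k^{\eta-1}F_k$ and combine $H_k\geq-H_*$ with the upper bound on $\zeta_{k,\delta}$ to get $E_{k+1}-E_k\leq C_{2,\zeta}H_*\,k^{\eta-1}\varepsilon_k$; since $\delta>\eta$ this right‑hand side is summable, so $E_k\leq\bar C$ for some $\bar C$. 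As $E_k\geq t_k^{\eta}F_k-t_k^{\eta}\varepsilon_{k-1}H_*$ (using $V_k\geq0$ and $H_k\geq-H_*$) and $t_k^{\eta}\varepsilon_{k-1}\to0$, this yields $t_k^{\eta}F_k=\mathcal{O}(1)$, i.e.\ $F_k=\mathcal{O}(k^{-\eta})$.

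For the cases $\hat\delta=1$ and $\frac{1}{\rho^\star}<\hat\delta<1$ I set $r:=C_{2,\zeta}/C_2$ and split on the sign of $H_k$: when $H_k\geq0$ the terms $\zeta_{k,\delta}H_k$ and $C_2 k^{\eta-1}F_k$ on the left are nonnegative and can be dropped, while when $H_k<0$ the upper bound on $\zeta_{k,\delta}$ gives $-\zeta_{k,\delta}H_k\leq-C_{2,\zeta}k^{\eta-1}\varepsilon_k H_k=-C_2 k^{\eta-1}(F_k+r\varepsilon_k H_k)+C_2 k^{\eta-1}F_k$, so that \eqref{eq: Attouch-Czarnecki condition lemma, discrete} turns \eqref{eq: general inequality for E(k+1)-E(k), lemma, discrete_appendix} into $E_{k+1}-E_k+g_k\leq C_2 I_k^r$. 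Telescoping and $\ell^1$‑summability of $(I_k^r)$ then give $E_k\leq\bar C$, and from $E_k\leq\bar C$ together with $H_k\geq-H_*$ I read off the preliminary rate $F_k=\mathcal{O}(k^{-\delta})$. If $\hat\delta=1$ then $\delta=\eta$, $t_k^{\eta}\varepsilon_{k-1}\simeq1$, so this preliminary rate is already $F_k=\mathcal{O}(k^{-\eta})$ and $E_k\geq-t_k^{\eta}\varepsilon_{k-1}H_*$ is bounded below. If $\frac{1}{\rho^\star}<\hat\delta<1$, I instead feed \eqref{eq: Hölderian error bound condition lemma, discrete} into $t_k^{\eta}F_k\leq E_k+t_k^{\eta}\varepsilon_{k-1}(-H_k)$ to obtain the self‑improving recursion $F_k\leq\bar C t_k^{-\eta}+C' t_k^{-\delta}F_k^{1/\rho}$, and Lemma \ref{lem: to improve the rate for Fk, discrete} upgrades $\mathcal{O}(k^{-\delta})$ to $F_k=\mathcal{O}(k^{-\eta})$; inserting this rate back into \eqref{eq: Hölderian error bound condition lemma, discrete} produces the lower bound $-C_H k^{-\delta+\eta/\rho^\star}\leq t_k^{\eta}\varepsilon_{k-1}H_k$ of item \ref{item:general_descent_lemma_0_lower_bound}, which, since $\delta>\eta/\rho^\star$, keeps $E_k$ bounded below.

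With $E_k$ bounded above by $\bar C$ and bounded below, the existence of $\lim_k E_k$ is the standard quasi‑Fej\'er argument: $E_k-C_2\sum_{\ell=k_0}^{k-1}I_\ell^r$ is nonincreasing and bounded below, hence convergent, and so is $E_k$. For the summability statements I telescope \eqref{eq: general inequality for E(k+1)-E(k), lemma, discrete_appendix} directly, obtaining $\sum_\ell(C_2\ell^{\eta-1}F_\ell+g_\ell+\zeta_{\ell,\delta}H_\ell)\leq E_{k_0}-\inf_k E_k<\infty$. The only term that can be negative is $\zeta_{\ell,\delta}H_\ell$, and I bound its negative part by applying \eqref{eq: Attouch-Czarnecki condition lemma, discrete} at the doubled level $2r$: for $H_\ell<0$ this gives $\zeta_{\ell,\delta}|H_\ell|\leq\tfrac{C_2}{2}\bigl(I_\ell^{2r}+\ell^{\eta-1}F_\ell\bigr)$, so only half of $\sum\ell^{\eta-1}F_\ell$ is absorbed and $\tfrac{C_2}{2}\sum\ell^{\eta-1}F_\ell+\sum g_\ell<\infty$. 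The positive part $\sum\zeta_{\ell,\delta}[H_\ell]_+$ is then controlled by the (now finite) right‑hand side of the telescoped inequality, and the lower bound $\zeta_{\ell,\delta}\geq C_{1,\zeta}\ell^{\eta-1}\varepsilon_{\ell-1}$ finally converts everything into the summability of $(k^{\eta-1}F_k)$, $(g_k)$ and $(k^{\eta-1}\varepsilon_{k-1}H_k)$.

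The main obstacle is precisely the sign of $H_k$: since $H$ is only bounded below while $\varepsilon_k$ vanishes slowly, the naive estimate $\zeta_{k,\delta}|H_k|\lesssim k^{\eta-1}\varepsilon_k H_*$ fails to be summable when $\delta=\eta$, and replacing it through \eqref{eq: Attouch-Czarnecki condition lemma, discrete} at the critical level $r$ exactly cancels the $\sum k^{\eta-1}F_k$ one wants to keep; the doubling trick at $2r$ is what breaks this tie. A secondary point to get right is the order of the deductions when $\hat\delta<1$, where the lower bound on $E_k$ uses the sharp rate $F_k=\mathcal{O}(k^{-\eta})$, which itself rests only on the upper bound $E_k\leq\bar C$ and the self‑improving Lemma \ref{lem: to improve the rate for Fk, discrete}, so the reasoning is not circular.
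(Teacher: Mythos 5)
Your proof is correct, and its overall architecture coincides with the paper's: obtain an upper bound on $E_k$ by splitting on the sign of $H_k$ and invoking \eqref{eq: Attouch-Czarnecki condition lemma, discrete}, read off the preliminary rate $F_k = \mathcal{O}(k^{-\delta})$, bootstrap to $F_k = \mathcal{O}(k^{-\eta})$ via Lemma \ref{lem: to improve the rate for Fk, discrete} and \eqref{eq: Hölderian error bound condition lemma, discrete} when $\hat\delta<1$, use the resulting lower bound on $t_k^\eta\varepsilon_{k-1}H_k$ to bound $E_k$ below, conclude $\lim_k E_k$ exists by the quasi-Fej\'er argument, and finally telescope for the summability statements. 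The one place where you genuinely diverge is the summability of $(k^{\eta-1}\varepsilon_{k-1}H_k)$: the paper first proves that $(|E_{k+1}-E_k|)_{k\geq1}$ is absolutely summable, by summing two auxiliary inequalities (its \eqref{eq: inequality 2 for E(k+1)-Ek, lemma, discrete} and \eqref{eq: inequality 3 for E(k+1)-Ek, lemma, discrete}) whose combination is sign-definite and telescopes, and then bounds $\zeta_{k,\delta}|H_k|$ by summable quantities; you instead decompose $\zeta_{k,\delta}H_k$ into positive and negative parts, control the negative part by invoking \eqref{eq: Attouch-Czarnecki condition lemma, discrete} at the doubled level $2r$ (so that only $\tfrac{C_2}{2}\sum k^{\eta-1}F_k$ is consumed, which is also how the paper's ``inequality 1'' halves the $F_k$ term), and then read the positive part off the telescoped inequality once $\sum k^{\eta-1}F_k<+\infty$ is in hand. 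Both arguments are valid; yours is more direct and avoids the auxiliary constants $C_{1,H}^k,\dots,C_{4,H}$, while the paper's route yields the absolute summability of the increments $E_{k+1}-E_k$ as a by-product. A second, minor difference: the paper derives its preliminary rates \eqref{eq: pre rates for Fk, case 0<delta<=1, lemma, discrete} without using \eqref{eq: Attouch-Czarnecki condition lemma, discrete} (via the integral estimates of Lemma \ref{lem: sums}, at the cost of a logarithm when $\delta=\eta$), whereas you get $F_k=\mathcal{O}(k^{-\delta})$ from the bound $E_k\leq\bar C$, which already uses \eqref{eq: Attouch-Czarnecki condition lemma, discrete}; since that condition is assumed in every case where you need the preliminary rate, this is harmless, and your ordering of deductions is, as you note, free of circularity.
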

\begin{proof}	First, in general, using $-H_{*} \leq H_{k}$, summing \eqref{eq: general inequality for E(k+1)-E(k), lemma, discrete_appendix} from $k_{0}$ to $k\geq k_{0}$ and dropping the nonnegative terms $V_{k}$ and $g_{k}$, we get 
	\begin{equation}\label{eq: for the rates for Fk and Hk, lemma, discrete}
		t_{k + 1}^{\eta} F_{k + 1} - \bigg(t_{k+1}^\eta\varepsilon_{k} + \sum_{\ell = k_0}^k \zeta_{\ell, \delta} \bigg)H_{*} \leq E_{k_{0}}.
	\end{equation}
	Note that according to \eqref{eq:general descent lemma for discrete case, assumption on theta} and Lemma \ref{lem: sums}, we have, for some $C_{3, \zeta}, C_{4, \zeta}>0$ large enough, for all $k \geq k_0$
	\begin{equation}\label{eq: for the rates for Fk 2, lemma, discrete}
		\sum_{\ell = k_{0}}^{k} \zeta_{\ell, \delta} \leq C_{2, \zeta} \sum_{\ell = k_{0}}^{k} \ell^{\eta-1}\varepsilon_\ell \leq C_{3, \zeta} \sum_{\ell = k_{0}}^{k}  \frac{1}{\ell^{\delta - \eta + 1}} \leq 
		\begin{cases}
			C_{4, \zeta} & \text{if $\delta > \eta$, \quad i.e., $\hat \delta >1$}, \\
			C_{4, \zeta} \ln(k) &\text{if $\delta = \eta$, \quad i.e., $\hat \delta =1$}, \\
			C_{4, \zeta} k^{\eta - \delta}  &\text{if $\delta < \eta$, \quad i.e., $\hat \delta <1$}.
		\end{cases}
	\end{equation}

	\textbf{Case $\hat \delta > 1$:} Since $t_{k+1}^{\eta}\varepsilon_{k} = \mathcal{O}(k^{-(\delta - \eta)})$ vanishes and $\sum_{\ell=k_0}^k\zeta_{\ell, \delta}$ is bounded, \eqref{eq: for the rates for Fk and Hk, lemma, discrete} yields $F_k = \mathcal{O}(k^{-\eta})$ as $k\to+\infty$, which proves the convergence rate in this case.
	
Before proceeding with the two remaining cases, let us collect a few facts. First, notice that when $0 < \hat \delta \leq 1$, we have $t_{k + 1}^{\eta}\varepsilon_{k} = \mathcal{O}(k^{\eta - \delta})$, thus from \eqref{eq: for the rates for Fk and Hk, lemma, discrete} and \eqref{eq: for the rates for Fk 2, lemma, discrete} we get, as $k\to+\infty$
	\begin{equation}\label{eq: pre rates for Fk, case 0<delta<=1, lemma, discrete}
		F_{k} = \mathcal{O}(\ln(k) k^{-\eta}), \quad \text{if} \ \delta = \eta, \quad \text{and} \quad F_k = \mathcal{O}(k^{-\delta}) \quad \text{if} \ \delta < \eta.
	\end{equation}
Furthermore, we need three inequalities which are a consequence of \eqref{eq: general inequality for E(k+1)-E(k), lemma, discrete_appendix} when $C_{2} > 0$. After dropping the nonnegative term $g_{k}$ when required, we come for all $k \geq k_0$ to 
	\begin{align}
		E_{k + 1} - E_{k} \leq E_{k + 1} - E_{k} + \frac{C_{2} k^{\eta - 1}}{2} F_{k} + g_{k} &\leq - \frac{C_{2} k^{\eta - 1}}{2} \left( F_{k} + \frac{2 \zeta_{k, \delta}}{C_{2} k^{\eta - 1}} H_{k}\right), \label{eq: inequality 1 for E(k+1)-Ek, lemma, discrete}\\
		E_{k + 1} - E_{k} + \frac{\zeta_{k, \delta}}{2} H_{k} &\leq -C_{2} k^{\eta - 1} \left( F_{k} + \frac{\zeta_{k, \delta}}{2 C_{2} k^{\eta - 1}}H_{k}\right), \label{eq: inequality 2 for E(k+1)-Ek, lemma, discrete}\\
		E_{k + 1} - E_{k} - \zeta_{k, \delta} H_{k} &\leq -C_{2} k^{\eta - 1} \left( F_{k} + \frac{2 \zeta_{k, \delta}}{C_{2} k^{\eta - 1}} H_{k}\right). \label{eq: inequality 3 for E(k+1)-Ek, lemma, discrete}
	\end{align}
Furthermore, since $F_{k} \geq 0$, for all $r > 0$ and all $k \geq k_0$ we have, using \eqref{eq:general descent lemma for discrete case, assumption on theta}
	\begin{align*}
		- k^{\eta - 1} \left( F_{k} +  r \zeta_{k, \delta} k^{1-\eta} H_{k}\right) \leq
		\begin{cases}
			0 &\text{if $H_{k} > 0$,} \\
			 - k^{\eta - 1} \left( F_{k} +  r C_{2, \zeta} \varepsilon_k  H_{k}\right) &  \text{if $H_{k} \leq 0$}.
		\end{cases}
	\end{align*}
Therefore, when \eqref{eq: Attouch-Czarnecki condition lemma, discrete} holds, for all $r>0$ there exists $(I_k^{rC_{2, \zeta}})_{k \in \N}\in\ell^1(\N)$ nonnegative such that
	\begin{equation}\label{eq: term with Fk, Hk, Thetak is upper bounded by a summable sequence, lemma, discrete}
		- k^{\eta - 1} \left( F_{k} + r\zeta_{k, \delta} k^{1 - \eta}  H_{k}\right) \leq I_{k}^{r C_{2, \zeta}} \quad \text{for all} \ k\geq k_0.
	\end{equation}
	First, we establish that $\lim_{k\to+\infty}E_k$ exists, and then come to the summability statements.
	
\textbf{Case $\hat \delta = 1$:} Suppose \eqref{eq: Attouch-Czarnecki condition lemma, discrete} holds. On the one hand, when $\hat \delta = 1$, we have $E_{k} \geq -cH_{*}$ for $k$ large enough. On the other hand, according to \eqref{eq: inequality 1 for E(k+1)-Ek, lemma, discrete} and \eqref{eq: term with Fk, Hk, Thetak is upper bounded by a summable sequence, lemma, discrete}, $(E_{k + 1} - E_{k})_{k \geq 1}$ is upper bounded by a summable sequence. Combining these facts yields the existence of the limit $\lim_{k\to +\infty} E_{k}$, i.e., Point \ref{item:general_descent_lemma_1_lim}. In particular, this also yields $F_{k} = \mathcal{O}( k^{-\eta})$ as $k\to +\infty$, i.e., Point \ref{item:general_descent_lemma_1_F}. 
	
\textbf{Case $\frac{1}{\rho^\star} < \hat \delta < 1$:} Suppose \eqref{eq: Hölderian error bound condition lemma, discrete} and \eqref{eq: Attouch-Czarnecki condition lemma, discrete} hold. Using \eqref{eq: inequality 1 for E(k+1)-Ek, lemma, discrete} and \eqref{eq: term with Fk, Hk, Thetak is upper bounded by a summable sequence, lemma, discrete} we get that $(E_{k + 1} - E_{k})_{k \geq 1}$ is upper bounded by a summable sequence. Therefore, $E_{k} \leq C_{E}$ for some constant $C_E$ and all $k \geq k_0$. In particular, using also \eqref{eq: Hölderian error bound condition lemma, discrete}, this implies the existence of a constant $C_{0, F}$ such that for all $k\geq k_0$
	\begin{equation*}
	   t_{k}^{\eta} F_{k} \leq C_{E} -  t_{k}^{\eta} \varepsilon_{k - 1} H_{k} \leq C_{E} +  C_{0, F} t_{k}^{\eta - \delta}  F_{k}^{\frac{1}{\rho}}.
	\end{equation*}
	Since \eqref{eq: pre rates for Fk, case 0<delta<=1, lemma, discrete} holds, we can apply Lemma \ref{lem: to improve the rate for Fk, discrete}, which yields $F_{k} = \mathcal{O} ( k^{-\eta}  )$ as $k\to+\infty$, that is, Point \ref{item:general_descent_lemma_0_F}. With this result, using \eqref{eq: Hölderian error bound condition lemma, discrete}, for some constant $C_{H} \geq 0$ it holds for all $k\geq k_0$ 
	\begin{equation}
		- t_{k}^{\eta} \varepsilon_{k - 1} H_k \leq t_{k}^{\eta} \varepsilon_{k - 1} C_{3} F_{k}^{\frac{1}{\rho}} \leq  C_H  k^{\eta - \delta - \frac{\eta}{\rho}} = C_H  k^{-(\delta - \frac{\eta}{\rho^\star})}.
	\end{equation}
	This yields Point \ref{item:general_descent_lemma_0_lower_bound}. Since $\delta = \eta\hat \delta> \frac{\eta}{\rho^\star}$, the right-hand side of the previous inequality vanishes as $k\to +\infty$, which, since $F_k, V_k\geq0$, implies that $(E_{k})_{k \geq 1}$ is lower bounded. As $(E_{k + 1} - E_{k})_{k\in\N}$ is upper bounded by a summable sequence, we obtain the existence of $\lim_{k\to +\infty} E_{k}$ and Point \ref{item:general_descent_lemma_0_lim}. 
	
Let us now conclude with the summability statements. In both cases, we have shown that the limit $\lim_{k\to +\infty} E_{k}$ exists. In particular, $(E_k)_{k \geq 1}$ is bounded. Plugging this statement back into \eqref{eq: inequality 1 for E(k+1)-Ek, lemma, discrete} and taking into account that $F_{k}, g_{k} \geq 0$ for all $k \geq k_0$,  gives 
	\begin{equation}\label{eq:proof_lemma_unifying_2}
		\sum_{k = 1}^{+\infty} k^{\eta - 1} F_{k} < +\infty \quad \text{and} \quad \sum_{k = 1}^{+\infty} g_{k} < +\infty. 
	\end{equation}
	Furthermore, relying once again on \eqref{eq: term with Fk, Hk, Thetak is upper bounded by a summable sequence, lemma, discrete}, from \eqref{eq: inequality 2 for E(k+1)-Ek, lemma, discrete} and \eqref{eq: inequality 3 for E(k+1)-Ek, lemma, discrete} we may write, for summable, nonnegative sequences $(C_{1, H}^{k})_{k \geq 1}$, $(C_{3, H}^{k})_{k \geq 1}$ and constants $C_{2, H}, C_{4, H} > 0$, for all $k \geq k_0$
	\begin{equation}\label{eq:proof_lemma_unifying_1}
		\begin{aligned}
			\zeta_{k, \delta} H_{k} &\leq C_{1, H}^{k} - C_{2, H}(E_{k + 1} - E_{k}) \leq C_{1, H}^{k} + C_{2, H} | E_{k + 1} - E_{k}|, \\
			-\zeta_{k, \delta} H_{k} &\leq C_{3, H}^{k} - C_{4, H} (E_{k + 1} - E_{k}) \leq C_{3, H}^{k} + C_{4, H} | E_{k + 1} - E_{k}|. 
		\end{aligned}
	\end{equation}
	Summing up these two inequalities and denoting by $C_{5, H}:=C_{2, H}+ C_{4, H}  >0 $, we obtain for all $k \geq k_0$
	\begin{equation*}
			\begin{aligned}
			& \sum_{\ell=k_0}^k \bigg| C_{1, H}^{\ell} + C_{3, H}^\ell - (C_{2, H} + C_{4, H})(E_{\ell + 1} - E_{\ell}) \bigg|\\
			= & \sum_{\ell=k_0}^k \bigg( C_{1, H}^{\ell} + C_{3, H}^\ell - C_{5, H} (E_{\ell + 1} - E_{\ell}) \bigg) = \sum_{\ell=k_0}^k \bigg( C_{1, H}^{\ell} + C_{3, H}^\ell \bigg) -C_{5, H} (E_{\ell + 1} - E_{k_0}).
			\end{aligned}
	\end{equation*}
	Since the right-hand side has a limit as $k\to +\infty$, and $(C_{1, H}^{k})_{k \geq 1}$, and $(C_{3, H}^k)_{k \geq 1}$ are summable and $C_{5, H} > 0$, we obtain that  $(|E_{k + 1} - E_{k}|)_{k \geq 1}$ is summable as well. Therefore, from \eqref{eq:proof_lemma_unifying_1} and \eqref{eq:general descent lemma for discrete case, assumption on theta} we get
	\begin{equation*}
	\sum_{k = 1}^{+\infty} \zeta_{k, \delta} | H_{k}| < +\infty, \quad \text{thus} \quad \sum_{k = 1}^{+\infty} k^{\eta - 1} \varepsilon_{k - 1} | H_{k}| < +\infty.
	\end{equation*}
	This, together with \eqref{eq:proof_lemma_unifying_2}, concludes the summability statements, i.e., Point \ref{item:general_descent_lemma_1_sum} of the case $\hat \delta=1$ and Point \ref{item:general_descent_lemma_0_sum} of the case $\frac{1}{\rho^\star}<\hat \delta<1$ and hence finishes the proof. 
\end{proof}

\section{Continuous-time analysis}\label{sec:continuous_time}

Our convergence analysis of Algorithm~\ref{alg:first_order} and Algorithm~\ref{alg:second_order} is closely guided by the analysis of the continuous-time systems~\eqref{eq:intro_tichonov_flow} and~\eqref{eq:intro_su_boyd_candes_tichonov}, even if $\hat h=\hat f = 0$. This analysis not only emerges as a foundational result with intrinsic theoretical interest, but also as an excellent assistant to derive convergence results for the corresponding numerical algorithms. In this section, we briefly illustrate and state the main results that can be obtained for \eqref{eq:intro_tichonov_flow} and \eqref{eq:intro_su_boyd_candes_tichonov}, with $\varepsilon: [t_0, +\infty)\to \R_+$ defined by:
\begin{equation}\label{eq:def_epsilon_continuous_time}
	\varepsilon(t) := \frac{c}{t^{\delta}}, \quad \text{for} \ c, \delta > 0, \ \text{and} \ t \geq t_0.
\end{equation}
Observe first that Assumption \ref{ass:attouch_czarnecki} with $\eta \in \{1, 2\}$, originally introduced by Attouch and Czarnecki in \cite{ac10} in the context of continuous-time systems such as \eqref{eq:intro_tichonov_flow}, now reads as: For any solution $x^*$ to the bilevel problem \eqref{eq:bilevel_problem} and all $p \in N_{\argmin f}(x^*)$ it holds
\begin{equation}\label{eq:ass_ac_continuous}
	\int_{t_0}^\infty t^{\eta - 1}\bigg((f- \min f)^*(\varepsilon(t) p) - \sigma_{\argmin f}(\varepsilon(t) p)\bigg) dt < +\infty.
\end{equation}
Furthermore, all auxiliary results including Lemma \ref{lem: sums}, Lemma \ref{lem: for the existence of lim|xk-x*|}, Lemma \ref{lem: discrete Opial}, Lemma \ref{lem: to improve the rate for Fk, discrete}, and Lemma \ref{lem: general descent lemma for discrete case} admit continuous-time counterparts. Specifically, Lemma \ref{lem: sums} can be stated with integrals instead of sums. The continuous-time counterpart of Lemma \ref{lem: for the existence of lim|xk-x*|} can be found, e.g., in \cite[Lemma A.4]{fastOGDA}. The continuous-time version of Opial's Lemma (Lemma \ref{lem: discrete Opial}) is well known, see, e.g., \cite[Lemma A.2]{fastOGDA}. Lemma \ref{lem: to improve the rate for Fk, discrete} can now be stated as follows: For a function $F:[t_0, +\infty)\to \R$, if
\begin{equation}
	F(t)=\mathcal{O}(t^{-\delta}) \quad \text{as}\  t\to+\infty, \quad \text{and} \quad F(t)\leq C_{E} t^{-\eta} + C_{0, F} t^{- \delta} F(t)^{\frac{1}{\rho}} \quad \text{for all} \ t \geq t_0, 
\end{equation}
then $F(t)=\mathcal{O}(t^{-\eta})$ as $t\to +\infty$. This fact can be shown using the same argument of Lemma \ref{lem: to improve the rate for Fk, discrete}. Eventually, a unifying result similar to Lemma \ref{lem: general descent lemma for discrete case} can be established in continuous time by replacing differences of consecutive terms in a sequence by derivatives and sums by integrals. More precisely, we have:
\begin{lem}\label{lem: general descent lemma for continuous case}
	Let $\hat \delta >0$, $\eta \in \{1, 2\}$, and set $\delta := \eta \hat \delta$. Consider a function $E:[t_0, +\infty)\to\R$ defined as 
	$$E(t) = t^{\eta} \Big(F(t) + \varepsilon(t) H(t) \Big) + V(t) \quad \text{for all} \ t \geq t_0,$$ with  $t \mapsto \varepsilon(t)$ as in \eqref{eq:def_epsilon_continuous_time} and $F(t) \geq 0$, $H(t) \geq -H_{*}$, where $H_* \geq 0$, and $V(t) \geq 0$ for all $t \geq t_0$. Suppose that 
	\begin{equation}\label{eq: general inequality for E(k+1)-E(k), lemma, continuous_appendix}
	 \dot E(t) + \zeta (t) H(t) + g(t) \leq -C_{2} t^{\eta - 1} F(t) \quad \text{for all} \ t \geq t_0,
	\end{equation}
	where $g : [t_0, +\infty) \to \R_+$, $C_2 > 0$, and $\zeta : [t_0, +\infty) \to \R$ satisfies, for $C_{1, \zeta} > 0$, $C_{2, \zeta} \geq 0$,
	\begin{equation}\label{eq:general descent lemma for continuous case, assumption on theta}
		C_{1, \zeta} t^{\eta-1} \varepsilon(t) \leq \zeta (t) \leq C_{2, \zeta} t^{\eta - 1} \varepsilon(t) \quad \text{for all} \ t \geq t_0.
	\end{equation}
	Consider the following conditions, which will only be assumed if we explicitly say so. For all $t\geq t_0$: 
	\begin{align*}
		(\labterm{eq: Hölderian error bound condition lemma, continuous}{C1}) & \quad -H(t) \leq C_{3} F(t)^{\frac{1}{\rho}}, \ \text{for some constant $C_{3} \geq 0$ and $1 < \rho \leq 2$. Set $\rho^\star$ as in \eqref{eq:def_rho_star}}, \\
		(\labterm{eq: Attouch-Czarnecki condition lemma, continuous}{C2}) & \quad \text{For each $r > 0$,} \ -t^{\eta - 1} \big( F(t) + r \varepsilon(t) H(t) \big) \leq I^{r}(t), \ \text{where $I^{r} \geq 0$ and $I^r \in \mathbb{L}^{1}[t_0, +\infty)$}.
	\end{align*}
	Then, following statements are true:
	\begin{itemize}
		\item Case $\hat \delta > 1$: It holds $F(t)=\mathcal{O} (t^{-\eta} )$ as $t \to +\infty$;
		\item Case $\hat \delta = 1$: If Assumption \eqref{eq: Attouch-Czarnecki condition lemma, continuous} holds and $C_{2} > 0$ in \eqref{eq: general inequality for E(k+1)-E(k), lemma, continuous_appendix}, then
		\begin{enumerate}
			\item\label{item:continuous_general_descent_lemma_1_lim} The limit $\lim_{t\to +\infty} E(t)$ exists;
			\item\label{item:continuous_general_descent_lemma_1_F} $F(t) = \mathcal{O} ( t^{-\eta} )$ as $t\to +\infty$;
			\item\label{item:continuous_general_descent_lemma_1_sum} The functions $t \mapsto t^{\eta - 1} F(t)$, $t\mapsto t^{\eta - 1}\varepsilon(t) H(t)$ and $t\mapsto g(t)$ are integrable.
		\end{enumerate}
		\item Case $\frac{1}{\rho^\star} < \hat \delta < 1$: If Assumptions \eqref{eq: Hölderian error bound condition lemma, continuous} and \eqref{eq: Attouch-Czarnecki condition lemma, continuous} hold and $C_{2} > 0$ in \eqref{eq: general inequality for E(k+1)-E(k), lemma, continuous_appendix}, then
		\begin{enumerate}
			\item\label{item:continuous_general_descent_lemma_0_F} $F(t) = \mathcal{O}( t^{-\eta} )$ as $t\to+\infty$;
			\item\label{item:continuous_general_descent_lemma_0_lower_bound} It holds $-C_H t^{-\delta + \frac{\eta}{\rho^\star}}\leq t^{\eta} \varepsilon(t) H(t)$ for some $C_H\geq 0$ and all $t \geq t_0$;
			\item\label{item:continuous_general_descent_lemma_0_lim} The limit $\lim_{t\to +\infty} E(t)$ exists;
			\item\label{item:continuous_general_descent_lemma_0_sum} The functions $ \mapsto t^{\eta - 1} F(t)$, $t \mapsto t^{\eta - 1}\varepsilon(t) H(t)$ and $t \mapsto g(t)$ are integrable.
		\end{enumerate} 
	\end{itemize}
\end{lem}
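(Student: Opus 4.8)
The plan is to transcribe the proof of Lemma~\ref{lem: general descent lemma for discrete case} into continuous time, replacing finite differences of consecutive terms by derivatives and sums by integrals; the continuous counterparts of all auxiliary results (in particular the integral version of Lemma~\ref{lem: sums} and the continuous-time version of Lemma~\ref{lem: to improve the rate for Fk, discrete}) are available as noted above. First I would integrate the dissipativity inequality \eqref{eq: general inequality for E(k+1)-E(k), lemma, continuous_appendix} from $t_0$ to $t$. Using $H(t)\geq -H_*$ and discarding the nonnegative terms $V(t)$ and $g(t)$, this yields a bound of the form
\begin{equation*}
	t^{\eta} F(t) \leq E(t_0) + \Big( t^{\eta}\varepsilon(t) + \int_{t_0}^{t}\zeta(s)\,ds\Big) H_*,
\end{equation*}
and the integral analog of Lemma~\ref{lem: sums} together with \eqref{eq:general descent lemma for continuous case, assumption on theta} shows that $\int_{t_0}^{t}\zeta(s)\,ds$ stays bounded when $\delta>\eta$, grows like $\ln t$ when $\delta=\eta$, and like $t^{\eta-\delta}$ when $\delta<\eta$. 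The case $\hat\delta>1$ is then immediate, since $t^{\eta}\varepsilon(t)\to0$ while the integral of $\zeta$ stays bounded, giving $F(t)=\mathcal{O}(t^{-\eta})$.

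For the two remaining cases I would first record the preliminary rates $F(t)=\mathcal{O}(\ln(t)\,t^{-\eta})$ if $\delta=\eta$ and $F(t)=\mathcal{O}(t^{-\delta})$ if $\delta<\eta$, in exact analogy with \eqref{eq: pre rates for Fk, case 0<delta<=1, lemma, discrete}, and then derive the three differential inequalities corresponding to the continuous analogs of \eqref{eq: inequality 1 for E(k+1)-Ek, lemma, discrete}--\eqref{eq: inequality 3 for E(k+1)-Ek, lemma, discrete}, obtained by splitting the coefficient $C_2\,t^{\eta-1}F(t)$ and adding or subtracting $\tfrac{1}{2}\zeta(t)H(t)$ and $\zeta(t)H(t)$. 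The crucial observation, as in the discrete proof, is that whenever \eqref{eq: Attouch-Czarnecki condition lemma, continuous} holds the quantity $-t^{\eta-1}\big(F(t)+r\,\zeta(t)t^{1-\eta}H(t)\big)$ is dominated, for every $r>0$, by an $\mathbb{L}^1$ function (trivially when $H(t)>0$, and by $I^{rC_{2,\zeta}}(t)$ when $H(t)\leq0$, using \eqref{eq:general descent lemma for continuous case, assumption on theta}). Consequently $\dot E(t)$ is bounded above by an integrable function $\phi$.

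In the case $\hat\delta=1$ one has $t^{\eta}\varepsilon(t)=c$, so $E(t)\geq -cH_*$ for large $t$; combining this lower bound with $\dot E\leq\phi$ (integrable) shows that $t\mapsto E(t)-\int_{t_0}^{t}\phi$ is nonincreasing and bounded below, hence $\lim_{t\to+\infty}E(t)$ exists, which in turn yields $F(t)=\mathcal{O}(t^{-\eta})$. In the case $\tfrac{1}{\rho^\star}<\hat\delta<1$ I would instead integrate $\dot E\leq\phi$ to conclude $E(t)\leq C_E$, whence \eqref{eq: Hölderian error bound condition lemma, continuous} gives $t^{\eta}F(t)\leq C_E + C_{0,F}\,t^{\eta-\delta}F(t)^{1/\rho}$; feeding in the preliminary rate and applying the continuous-time version of Lemma~\ref{lem: to improve the rate for Fk, discrete} produces $F(t)=\mathcal{O}(t^{-\eta})$. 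This rate, together with \eqref{eq: Hölderian error bound condition lemma, continuous}, yields the lower bound $-C_H\,t^{-\delta+\eta/\rho^\star}\leq t^{\eta}\varepsilon(t)H(t)$ of Point~\ref{item:continuous_general_descent_lemma_0_lower_bound}, whose right-hand side vanishes since $\delta>\eta/\rho^\star$; as $F,V\geq0$, this bounds $E$ from below and again gives the existence of $\lim_{t\to+\infty}E(t)$.

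Finally, for the integrability statements I would use the existence of $\lim_{t\to+\infty}E(t)$ to upgrade the one-sided control of $\dot E$ to $\dot E\in\mathbb{L}^1[t_0,+\infty)$: adding the continuous analogs of \eqref{eq: inequality 2 for E(k+1)-Ek, lemma, discrete} and \eqref{eq: inequality 3 for E(k+1)-Ek, lemma, discrete} gives $0\leq C_{1,H}(t)+C_{3,H}(t)-C_{5,H}\dot E(t)$ with $C_{1,H},C_{3,H}$ integrable and $C_{5,H}:=C_{2,H}+C_{4,H}>0$; integrating this nonnegative quantity, whose integral equals $\int_{t_0}^{t}(C_{1,H}+C_{3,H})-C_{5,H}(E(t)-E(t_0))$ and therefore converges, forces $\dot E\in\mathbb{L}^1$. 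Integrating the continuous analog of \eqref{eq: inequality 1 for E(k+1)-Ek, lemma, discrete} then yields that $t\mapsto t^{\eta-1}F(t)$ and $t\mapsto g(t)$ are integrable, and bounding $\zeta(t)|H(t)|$ by $C_{1,H}(t)+C_{3,H}(t)+C_{5,H}|\dot E(t)|$ gives $\zeta(t)|H(t)|\in\mathbb{L}^1$, hence $t^{\eta-1}\varepsilon(t)|H(t)|\in\mathbb{L}^1$ by \eqref{eq:general descent lemma for continuous case, assumption on theta}. I expect the main obstacle to be exactly this last step: one must guarantee enough regularity of $E$ (local absolute continuity, so that $\dot E$ exists a.e.\ and the fundamental theorem of calculus applies along the trajectory) and convert the merely one-sided integral bound on $\dot E$ into genuine $\mathbb{L}^1$ control, which is the continuous surrogate of the telescoping argument used in the discrete case.
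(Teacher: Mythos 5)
Your proposal is correct and takes essentially the same route as the paper: the paper itself offers no detailed proof of Lemma~\ref{lem: general descent lemma for continuous case}, stating only that it follows by transcribing the proof of Lemma~\ref{lem: general descent lemma for discrete case} with derivatives in place of finite differences and integrals in place of sums, which is exactly the transcription you carry out (integrated dissipativity bound, preliminary rates, the three split inequalities, the domination by $I^{rC_{2,\zeta}}$, and the telescoping/$\mathbb{L}^1$ argument for the summability statements). Your closing remark on the local absolute continuity of $E$ is a sensible technical point that the paper leaves implicit, since $\dot E$ already appears in the hypothesis \eqref{eq: general inequality for E(k+1)-E(k), lemma, continuous_appendix}.
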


\subsection{First-order system}\label{sec:continuous_time_first_order}

Let us start with the first-order system \eqref{eq:intro_tichonov_flow} with starting point $x(t_0)=x_0\in \cH$. As anticipated in Remark \ref{rem:continuous_time_first_order}, we consider the Lyapunov energy:
\begin{equation*}
	E^\lambda(t):=t(\Psi_t(x(t))-\Psi_t(x^*)) + \frac{\lambda}{2}\|x(t)-x^*\|^2, \quad \text{for} \ t\geq t_0,
\end{equation*}
where $\Psi_{t}(x):=f(x) + \varepsilon(t) h(x)$ is the regularized function \eqref{eq:regularized}, $x^{*}$ is a solution to the bilevel problem \eqref{eq:bilevel_problem}, and $\lambda > 1$. The analogous statement of Lemma \ref{lem:dissipation_first_order} can be easily obtained by taking the time derivative of $E_{\lambda}$. We get for all $t \geq t_0$
\begin{equation*}
	\begin{aligned}
	\Dot{E}_{\lambda}(t) = &\: \Psi_t(x(t)) - \Psi_t(x^{*}) + t\Bigl( \Dot{\varepsilon}(t) (h(x(t)) - h(x^{*})) + \bigl\langle \Dot{x}(t), \nabla f(x(t)) + \varepsilon(t) \nabla h(x(t))\bigr\rangle\Bigr)  \\
	&+ \lambda \bigl\langle x(t) - x^{*}, \Dot{x}(t)\bigr\rangle  \\
	\leq &\: \Psi_t(x(t)) - \Psi_t(x^{*}) - t \bigl\| \Dot{x}(t)\bigr\|^{2} + t\Dot{\varepsilon}(t) (h(x(t)) - h(x^{*})) - \lambda (\Psi_t(x(t)) - \Psi_t(x^{*}))  \\
	= &\: - (\lambda - 1) (f(x(t)) - f(x^{*})) - t\bigl\| \Dot{x}(t)\bigr\|^{2} +  \bigl( t\Dot{\varepsilon}(t) - (\lambda - 1) \varepsilon(t)\bigr) (h(x(t)) - h(x^{*})). 
	\end{aligned}
\end{equation*}
Rearranging the terms and dropping nonnegative ones, for all $t \geq t_0$ we arrive at
\begin{equation}
	\Dot{E}_{\lambda}(t) + \zeta(t) (h(x(t)) - h(x^{*})) \leq - (\lambda - 1) (f(x(t)) - f(x^{*})), \ \mbox{where} \ \zeta(t):=(\lambda - 1) \varepsilon(t) - t\Dot{\varepsilon}(t).
\end{equation}
Applying Lemma \ref{lem: general descent lemma for continuous case} we obtain the following: for case i) $\delta >1$, that $f(x(t)) - \min f = \mathcal{O}(t^{-1})$ as $t\to+\infty$; for both cases ii) $\delta=1$ with \eqref{eq:ass_ac_continuous} and $\eta=1$, or iii) $\delta \in (\frac{1}{\rho^{\star}}, 1)$ with Assumption \ref{ass:holderian_growth}, that for all $\lambda \geq 1$,
\begin{enumerate}
	\item The limit $\lim_{t \to+\infty}E^\lambda(t)$ exists;
	\item $f(x(t)) - \min f = \mathcal{O}(t^{-1})$ as $t\to+\infty$;
        \item In case iii), the following lower bound holds $t\varepsilon(t) (h(x(t) - h(x^*)) \geq - C_h t^{-(\delta - \frac{1}{\rho^\star})}$ for some $C_h\geq 0$ and all $t \geq t_0$;
	\item The following integrability statements hold\begin{equation}\label{eq:integrability_continuous_first_order}
		\int_{t_0}^\infty (f(x(t) - f(x^*))dt <+\infty, \quad \text{and} \quad \int_{t_0}^\infty t^{-\delta}|h(x(t)) - h(x^*)|dt <+\infty.
	\end{equation} 
\end{enumerate}
By taking $1<\lambda_1<\lambda_2$ and considering the difference $E_{\lambda_2}(t) - E_{\lambda_1}(t)$, we deduce that $\varphi(t):= \frac{1}{2}\|x(t) - x^*\|^2$ admits a limit as $t\to+\infty$ for all solutions $x^*$ to the bilevel problem \eqref{eq:bilevel_problem}, and that 
\begin{equation}\label{eq:limit_of_q_continuous_first_order}
	\lim_{t\to +\infty} q(t):= t(f(x(t)) - f(x^*)) + t\varepsilon(t)(h(x(t)) - h(x^*)) \quad \text{exists}.
\end{equation}
From \eqref{eq:limit_of_q_continuous_first_order} and \eqref{eq:integrability_continuous_first_order} we deduce, arguing as in \eqref{eq:proof_thm_first_order_2}, that $\lim_{t \to+\infty} q(t) = 0$. Proceeding as in \eqref{eq:proof_thm_first_order_3} we deduce that $h(x(t)) - h(x^*)$ vanishes as $t\to+\infty$, and, therefore, that all weak sequential cluster points of $t\mapsto x(t)$ as $t\to+\infty$ lie in the solution set of the bilevel problem \eqref{eq:bilevel_problem}. Opial's Lemma in continuous time yields weak convergence of the full trajectory to some solution to the bilevel problem \eqref{eq:bilevel_problem}. To conclude, using a similar argument as in \eqref{eq:proof_thm_first_order_4_1} and \eqref{eq:proof_thm_first_order_4_2} yields the little-$o$ rates. To summarize, we can show the following convergence result for trajectories generated by \eqref{eq:intro_tichonov_flow}.
\begin{thm}\label{thm: first order system_continuous}
Let $x :[t_{0}, +\infty) \to \mathcal{H}$ be a solution trajectory to \eqref{eq:intro_tichonov_flow}. Then, the following statements are true:
\begin{itemize}
	\item Case $\delta > 1$: It holds $f(x(t)) - \min f=\mathcal{O}(t^{-1})$ as $t\to+\infty$.

\item[] \hspace*{-\leftmargin} Suppose Assumption \ref{ass:qualification} holds.

	\item Case $\delta = 1$: If $f$ satisfies \eqref{eq:ass_ac_continuous} with $\eta = 1$, it holds
	\[
	f(x(t)) - \min f = o(t^{-1}) \quad \text{and} \quad h(x(t)) \to \min\nolimits_{\argmin f} h \quad \text{as} \ t\to +\infty.
	\]
Furthermore, $t\mapsto x(t)$ converges weakly towards a solution to the bilevel problem \eqref{eq:bilevel_problem};
	\item Case $ \frac{1}{\rho^\star} < \delta < 1$: If $f$ satisfies Assumption \ref{ass:holderian_growth} with exponent $\rho$, it holds
	\[
	f(x(t)) - \min f = o(t^{-1}) \quad \text{and} \quad |h(x(t)) - \min\nolimits_{\argmin f} h| = o(t^{-1+\delta}) \quad \text{as} \ t \to +\infty,
	\]
Furthermore, $t\mapsto x(t)$ converges weakly towards a solution to the bilevel problem \eqref{eq:bilevel_problem}.
\end{itemize}
\end{thm}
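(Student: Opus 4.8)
The plan is to transport the discrete argument of Theorem~\ref{thm: first order algorithm} to the flow \eqref{eq:intro_tichonov_flow} by working with the continuous-time Lyapunov function $E^\lambda(t)=t(\Psi_t(x(t))-\Psi_t(x^*))+\tfrac{\lambda}{2}\|x(t)-x^*\|^2$, where $\lambda>1$ and $x^*$ is a solution to \eqref{eq:bilevel_problem}. First I would differentiate $E^\lambda$ along a trajectory. The chain rule produces the term $t\langle\dot x(t),\nabla f(x(t))+\varepsilon(t)\nabla h(x(t))\rangle$, which after substituting the constitutive equation $\dot x(t)=-(\nabla f(x(t))+\varepsilon(t)\nabla h(x(t)))$ becomes $-t\|\dot x(t)\|^2$, while the explicit $t$-dependence of $\Psi_t$ contributes $t\dot\varepsilon(t)(h(x(t))-h(x^*))$. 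Bounding $\Psi_t(x(t))-\Psi_t(x^*)$ by the gradient (convexity) inequality, collecting terms and discarding the nonnegative $t\|\dot x(t)\|^2$, I reach the dissipation inequality $\dot E^\lambda(t)+\zeta(t)(h(x(t))-h(x^*))\le-(\lambda-1)(f(x(t))-f(x^*))$ with $\zeta(t)=(\lambda-1)\varepsilon(t)-t\dot\varepsilon(t)$; this is the exact continuous analogue of Lemma~\ref{lem:dissipation_first_order}. Since $\varepsilon(t)=ct^{-\delta}$ one verifies by a direct computation that $\zeta$ satisfies the two-sided bound \eqref{eq:general descent lemma for continuous case, assumption on theta} with $\eta=1$.

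Next I would invoke Lemma~\ref{lem: general descent lemma for continuous case} with $\eta=1$, $F(t)=f(x(t))-\min f$, $H(t)=h(x(t))-h(x^*)$, $g\equiv0$ and $V(t)=\tfrac{\lambda}{2}\|x(t)-x^*\|^2$. The case $\delta>1$ then immediately gives $f(x(t))-\min f=\mathcal{O}(t^{-1})$. For $\delta=1$ and $\delta\in(\tfrac{1}{\rho^\star},1)$ I must first check the hypotheses of the lemma: the Attouch--Czarnecki condition \eqref{eq:ass_ac_continuous} together with the qualification-based estimate \eqref{eq: Attouch-Czarnercki implies the integrability condition of the lemma} (with $p^*\in N_{\argmin f}(x^*)$ such that $-p^*\in\partial H(x^*)$) yields condition \eqref{eq: Attouch-Czarnecki condition lemma, continuous}, whereas under Hölderian growth I would derive \eqref{eq: Hölderian error bound condition lemma, continuous} from Lemma~\ref{lem: Hölderian error bound implies the second condition of the lemma} and \eqref{eq: Attouch-Czarnecki condition lemma, continuous} from the conjugate bound \eqref{eq: bound for conjugates}. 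The lemma then provides the existence of $\lim_{t\to\infty}E^\lambda(t)$, the rate $f(x(t))-\min f=\mathcal{O}(t^{-1})$, the lower bound $t\varepsilon(t)(h(x(t))-h(x^*))\ge -C_ht^{-(\delta-1/\rho^\star)}$, and the integrability statements \eqref{eq:integrability_continuous_first_order}.

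The remaining structure is the Opial machinery. Since $E^\lambda(t)=q(t)+\tfrac{\lambda}{2}\|x(t)-x^*\|^2$, comparing two parameters $1<\lambda_1<\lambda_2$ gives $E^{\lambda_2}(t)-E^{\lambda_1}(t)=\tfrac{\lambda_2-\lambda_1}{2}\|x(t)-x^*\|^2$, so $\lim_t\|x(t)-x^*\|$ exists for every solution $x^*$ (the first Opial condition) and, subtracting off the squared-norm part, $\lim_t q(t)$ exists, where $q(t)=t(f(x(t))-f(x^*))+t\varepsilon(t)(h(x(t))-h(x^*))$. Combining this with $\int_{t_0}^\infty|q(t)|/t\,dt<\infty$, which follows from \eqref{eq:integrability_continuous_first_order}, forces $\lim_tq(t)=0$. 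Boundedness of the trajectory yields a weak cluster point $x^\diamond$; the $\mathcal{O}(t^{-1})$ rate and weak lower semicontinuity of $f$ place $x^\diamond\in\argmin f$, while passing to the liminf in $t\varepsilon(t)(h(x(t))-h(x^*))\le q(t)$ (using that $t\varepsilon(t)=ct^{1-\delta}$ is bounded away from zero and the weak lower semicontinuity of $h$) gives $h(x^\diamond)\le h(x^*)$, so $x^\diamond$ solves \eqref{eq:bilevel_problem}; the continuous-time Opial lemma then yields weak convergence of the whole trajectory. Finally, the little-$o$ rates follow exactly as in \eqref{eq:proof_thm_first_order_4_1}--\eqref{eq:proof_thm_first_order_4_2}, using the subgradient inequality for $H$ at $-p^*$ when $\delta=1$ or the lower bound on $t\varepsilon(t)H(t)$ when $\delta\in(\tfrac{1}{\rho^\star},1)$, together with $q(t)\to0$, to extract first $t(f(x(t))-f(x^*))\to0$ and then $t\varepsilon(t)(h(x(t))-h(x^*))\to0$. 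I expect the only real obstacle to be technical rather than conceptual: ensuring enough regularity of the trajectory to differentiate $E^\lambda$ termwise and correctly absorbing the $\dot\varepsilon$ contribution so that $\zeta(t)$ lands in the admissible range of \eqref{eq:general descent lemma for continuous case, assumption on theta}; once the dissipation inequality is secured, everything reduces to the already-established Lemma~\ref{lem: general descent lemma for continuous case} and the Opial argument.
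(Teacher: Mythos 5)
Your proposal is correct and follows essentially the same route as the paper's own proof: the same Lyapunov function $E^\lambda(t)$, the same dissipation inequality with $\zeta(t)=(\lambda-1)\varepsilon(t)-t\dot\varepsilon(t)$ (which for $\varepsilon(t)=ct^{-\delta}$ equals $(\lambda-1+\delta)\varepsilon(t)$, so \eqref{eq:general descent lemma for continuous case, assumption on theta} holds with $\eta=1$), the same invocation of Lemma~\ref{lem: general descent lemma for continuous case}, and the same two-parameter $\lambda_1<\lambda_2$ comparison, $q(t)\to 0$ argument, Opial lemma, and little-$o$ extraction as in \eqref{eq:proof_thm_first_order_4_1}--\eqref{eq:proof_thm_first_order_4_2}. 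No gaps; the verification of conditions \eqref{eq: Hölderian error bound condition lemma, continuous} and \eqref{eq: Attouch-Czarnecki condition lemma, continuous} via the qualification condition, Lemma~\ref{lem: Hölderian error bound implies the second condition of the lemma}, and \eqref{eq: bound for conjugates} is exactly how the paper transfers the discrete checks to continuous time.
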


\subsection{Second-order system}

We now consider the second-order system \eqref{eq:intro_su_boyd_candes_tichonov} with starting position $x(t_0)=x_0\in \cH$ and starting velocity $\dot x(t_0)=v$. As anticipated in Remark \ref{rem:continuous_time_second_order}, the continuous-time counterpart of the Lyapunov energy \eqref{eq:lyapunov_second_order} is defined for  $\alpha >3$, $\lambda \in (2, \alpha - 1)$, and $t\geq t_{0}$ as 
\begin{equation}
	E_{\lambda}(t) := t^{2} (\Psi_t(x(t)) - \Psi_t(x^{*})) + \frac{1}{2} \|\lambda(x(t) - x^{*}) + t\Dot{x}(t) \|^{2} + \frac{\lambda(\alpha - 1 - \lambda)}{2} \| x(t) - x^{*}\|^{2},
\end{equation}
where, once again, $\Psi_t(x) := f(x) + \varepsilon(t) h(x)$ and $x^{*}$ is a solution to the bilevel problem \eqref{eq:bilevel_problem}. To derive the analogous result to Lemma \ref{lem:dissipation_second_order}, we denote by $v_{\lambda}(t):=\lambda(x(t) - x^{*}) + t\Dot{x}(t)$ and take the time derivative of $E^\lambda$ to get for all $t>t_0$
\begin{equation*}
	\begin{aligned}
	\Dot{E}_{\lambda}(t) = &\: 2t (\Psi_t(x(t)) - \Psi_t(x^{*})) + t^{2} \Bigl( \Dot{\varepsilon}(t)(h(x(t)) - h(x^{*})) + \bigl\langle \Dot{x}(t), \nabla f(x(t)) + \varepsilon(t) \nabla h(x(t))\bigr\rangle\Bigr) \\
	&+ \bigl\langle v_{\lambda}(t), \Dot{v}_{\lambda}(t)\bigr\rangle + \lambda(\alpha - 1 - \lambda) \bigl\langle x(t) - x^{*}, \Dot{x}(t)\bigr\rangle \\
	= &\: 2t (\Psi_t(x(t)) - \Psi_t(x^{*})) + t^{2} \Bigl( \Dot{\varepsilon}(t)(h(x(t)) - h(x^{*})) + \bigl\langle \Dot{x}(t), \nabla f(x(t)) + \varepsilon(t) \nabla h(x(t))\bigr\rangle\Bigr) \\
	& -  \lambda(\alpha - 1 - \lambda) \bigl\langle x(t) - x^{*}, \Dot{x}(t)\bigr\rangle -\lambda t \bigl\langle x(t) - x^{*}, \nabla f(x(t)) + \varepsilon(t) \nabla h(x(t))\bigr\rangle  \\
	&- (\alpha - 1 - \lambda) t \bigl\| \Dot{x}(t)\bigr\|^{2} - t^{2} \bigl\langle \Dot{x}(t), \nabla f(x(t)) + \varepsilon(t) \nabla h(x(t))\bigr\rangle + \lambda(\alpha - 1 - \lambda) \bigl\langle x(t) - x^{*}, \Dot{x}(t)\bigr\rangle  \\
	= &\: 2t(\Psi_t(x(t)) - \Psi_t(x^{*})) + t^{2} \Dot{\varepsilon}(t) (h(x(t)) - h(x^{*}))  - (\alpha - 1 - \lambda) t \bigl\| \Dot{x}(t)\bigr\|^{2}  \\
	&- \lambda t \bigl\langle x(t) - x^{*}, \nabla f(x(t)) + \varepsilon(t) \nabla h(x(t))\bigr\rangle  \\
	\leq &\: -t(\lambda-2)(\Psi_t(x(t)) - \Psi_t(x^{*})) + t^{2} \Dot{\varepsilon}(t) (h(x(t)) - h(x^{*})) + (\lambda + 1 - \alpha) t \bigl\| \Dot{x}(t)\bigr\|^{2}.
	\end{aligned}
\end{equation*}
Rearranging suitably and using the definition of $\Psi_t$, we obtain:
\begin{equation}\label{eq:dissipation_second_order_continuous}
\Dot{E}_{\lambda}(t) + \zeta(t) (h(x(t)) - h(x^{*})) + (\alpha - 1 - \lambda) t \bigl\| \Dot{x}(t)\bigr\|^{2}\leq -t(\lambda - 2) (f(x(t)) - f(x^{*})),
\end{equation}
where $\zeta(t):= t(\lambda - 2)\varepsilon(t) - t^{2} \Dot{\varepsilon}(t)$ for all $t \geq t_0$. We have obtained in \eqref{eq:dissipation_second_order_continuous} the continuous-time counterpart to the statement of Lemma \ref{lem:dissipation_second_order}. Using once again the continuous-time counterpart of Lemma \ref{lem: general descent lemma for discrete case} (i.e., Lemma \ref{lem: general descent lemma for continuous case}) we obtain the following: for case i) $\delta > 2$, that $f(x(t)) - \min f = \mathcal{O}(t^{-2})$ as $t\to+\infty$; for both cases ii) $\delta=2$ with \eqref{eq:ass_ac_continuous} and $\eta =2$, and iii) $\delta \in (\frac{2}{\rho^{\star}}, 2)$, with Assumption \ref{ass:holderian_growth}, that for all $\lambda \in (2,\alpha-1)$, 
\begin{enumerate}
	\item The limit $\lim_{t \to+\infty}E^\lambda(t)$ exists;
	\item $f(x(t)) - \min f = \mathcal{O}(t^{-2})$ as $t\to+\infty$;
        \item In case iii), the following lower bound holds $t^2\varepsilon(t) (h(x(t) - h(x^*)) \geq - C_h t^{-(\delta - \frac{2}{\rho^\star})}$ for some $C_h\geq 0$ and all $t \geq t_0$;
	\item The following integrability statements hold
	\begin{equation}
		\int_{t_0}^\infty t(f(x(t)) - f(x^*))dt <+\infty, \quad \text{and} \quad \int_{t_0}^\infty t^{1-\delta}|h(x(t)) - h(x^*)|dt <+\infty.
	\end{equation} 
\end{enumerate}
Arguing similarly as in the proof of Theorem \ref{thm:second_order_main} we obtain the following convergence result.
\begin{thm}\label{thm: second order system}
Let $x : [t_{0}, +\infty) \to \mathcal{H}$ be a solution trajectory to \eqref{eq:intro_su_boyd_candes_tichonov}. Then, the following statements are true:
	\begin{itemize}
		\item Case $\delta > 2$: It holds $f(x(t)) - \min f = \mathcal{O}(t^{-2})$ as $t\to+\infty$.
        
 \item[] \hspace*{-\leftmargin} Suppose Assumption \ref{ass:qualification} holds.       
 
		\item Case $\delta = 2$: If $f$ satisfies \eqref{eq:ass_ac_continuous} with $\eta = 2$, it holds
		\[
		f(x(t)) - \min f = o(t^{-2}) \quad \text{and} \quad h(x(t)) \to \min\nolimits_{\argmin f} h \quad \text{as} t \ \to +\infty. 
		\]
Furtheremore, $t\mapsto x(t)$ converges weakly towards a solution to the bilevel problem \eqref{eq:bilevel_problem}.

		\item Case $\frac{2}{\rho^\star} < \delta < 2$: If $f$ satisfies Assumption \ref{ass:holderian_growth} with exponent $\rho$, it holds
		\[
		f(x(t)) - \min f = o(t^{-2}) \quad \text{and} \quad |h(x(t)) - \min\nolimits_{\argmin f} h| = o(t^{-2+\delta}) \quad \text{as} \ t\to +\infty.
		\]
Furthermore, $t\mapsto x(t)$ converges weakly towards a solution to the bilevel problem \eqref{eq:bilevel_problem}.
	\end{itemize}
\end{thm}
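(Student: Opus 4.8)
The plan is to follow the template of the discrete proof of Theorem \ref{thm:second_order_main}, transferring each step from sums to integrals. Fix an arbitrary solution $x^{*}$ to \eqref{eq:bilevel_problem} and $\lambda \in (2, \alpha - 1)$. The starting point is the dissipation inequality \eqref{eq:dissipation_second_order_continuous} together with the continuous unifying lemma (Lemma \ref{lem: general descent lemma for continuous case}) applied with $\eta = 2$, $\hat\delta = \delta/2$, $F(t) = f(x(t)) - f(x^{*})$, $H(t) = h(x(t)) - h(x^{*})$, $g(t) = (\alpha - 1 - \lambda)t\|\dot x(t)\|^{2}$, and $V(t) = \tfrac12\|v_{\lambda}(t)\|^{2} + \tfrac{\lambda(\alpha - 1 - \lambda)}{2}\|x(t)-x^{*}\|^{2}$. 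Exactly as in the discrete case, one verifies that $\zeta(t) = t(\lambda-2)\varepsilon(t) - t^{2}\dot\varepsilon(t)$ satisfies the sandwich \eqref{eq:general descent lemma for continuous case, assumption on theta}, that \eqref{eq:ass_ac_continuous} implies \eqref{eq: Attouch-Czarnecki condition lemma, continuous}, and that Assumption \ref{ass:holderian_growth} implies both \eqref{eq: Hölderian error bound condition lemma, continuous} and \eqref{eq: Attouch-Czarnecki condition lemma, continuous} via \eqref{eq: Hölderian error bound implies the second condition of the lemma} and \eqref{eq: bound for conjugates}. This delivers the four enumerated consequences stated before the theorem: the rate $f(x(t)) - \min f = \mathcal{O}(t^{-2})$ in every case, and in cases (ii) and (iii) the existence of $\lim_{t\to\infty}E^{\lambda}(t)$, the integrability statements, and (in case (iii)) the lower bound on $t^{2}\varepsilon(t)H(t)$.

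Next I would establish weak convergence through the continuous version of Opial's Lemma (cf. \cite[Lemma A.2]{fastOGDA}). For its first condition, taking $2 < \lambda_{1} < \lambda_{2} < \alpha - 1$, the difference $E^{\lambda_{2}}(t) - E^{\lambda_{1}}(t)$ reduces, just as in the discrete computation, to $(\lambda_{2}-\lambda_{1})p(t)$ with $p(t) := \tfrac{\alpha-1}{2}\|x(t)-x^{*}\|^{2} + t\langle x(t)-x^{*}, \dot x(t)\rangle$; since both energies converge, $\lim_{t\to\infty}p(t)$ exists. Writing $\varphi(t) := \tfrac12\|x(t)-x^{*}\|^{2}$ and using $\dot\varphi(t) = \langle x(t)-x^{*}, \dot x(t)\rangle$, one has $p(t) = (\alpha-1)\varphi(t) + t\dot\varphi(t)$, so the continuous counterpart of Lemma \ref{lem: for the existence of lim|xk-x*|} (see \cite[Lemma A.4]{fastOGDA}) forces $\lim_{t\to\infty}\varphi(t)$ to exist. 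As $x^{*}$ was arbitrary, the first Opial condition holds and the trajectory is bounded.

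For the second Opial condition I would mirror the discrete argument with $q(t) := t^{2}(f(x(t)) - f(x^{*})) + t^{2}\varepsilon(t)(h(x(t)) - h(x^{*})) + \tfrac{t^{2}}{2}\|\dot x(t)\|^{2}$. Rewriting $E^{\lambda_{1}}(t) = q(t) + \lambda_{1}p(t)$ shows $\lim_{t\to\infty}q(t)$ exists, and the three integrability statements give $\int_{t_{0}}^{\infty}|q(t)|/t\,dt < +\infty$; since $\int^{\infty}dt/t = +\infty$, the limit must be $0$. For a weak cluster point $x^{\diamond}$ along some $t_{n}\to\infty$, the rate $f(x(t)) - \min f \to 0$ and weak lower semicontinuity of $f$ give $x^{\diamond}\in\argmin f$; dropping the two nonnegative terms in $q(t_{n})$ leaves $t_{n}^{2}\varepsilon(t_{n})(h(x(t_{n})) - h(x^{*})) \leq q(t_{n}) \to 0$, and since $t^{2}\varepsilon(t) = c\,t^{2-\delta}$ is bounded away from zero for $\delta\leq 2$, passing to the liminf and using weak lower semicontinuity of $h$ yields $h(x^{\diamond})\leq h(x^{*})$, so $x^{\diamond}$ solves \eqref{eq:bilevel_problem}. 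Opial's Lemma then gives weak convergence of the whole trajectory to some solution, which I relabel $x^{*}$.

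Finally, to upgrade to little-$o$ rates I would use, for $\delta = 2$, the subgradient inequality for $h$ at $-p^{*}\in\partial h(x^{*})$, and for $\tfrac{2}{\rho^{\star}} < \delta < 2$, the lower bound from consequence (3), to obtain in each case
$$ t^{2}(f(x(t)) - f(x^{*})) + \tfrac{t^{2}}{2}\|\dot x(t)\|^{2} \leq q(t) + o(1) . $$
Since the left-hand side is a sum of two nonnegative terms and the right-hand side tends to $0$, both $t^{2}(f(x(t)) - f(x^{*}))$ and $\tfrac{t^{2}}{2}\|\dot x(t)\|^{2}$ vanish, giving $f(x(t)) - \min f = o(t^{-2})$; feeding this back into $q(t)\to 0$ forces $t^{2}\varepsilon(t)(h(x(t)) - h(x^{*}))\to 0$, which reads as $h(x(t))\to\min\nolimits_{\argmin f}h$ when $\delta = 2$ and as $|h(x(t)) - \min\nolimits_{\argmin f}h| = o(t^{-2+\delta})$ when $\delta < 2$. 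The main obstacle I anticipate is the first Opial condition: securing the existence of $\lim_{t\to\infty}\|x(t)-x^{*}\|$ is the one place where the bare dissipation estimate is insufficient and one must combine the two-parameter difference $E^{\lambda_{2}} - E^{\lambda_{1}}$ with the auxiliary convergence lemma; everything else is a routine, if lengthy, translation of the discrete bookkeeping into integrals, modulo the standard regularity (existence and absolute continuity) of solution trajectories to \eqref{eq:intro_su_boyd_candes_tichonov}.
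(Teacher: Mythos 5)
Your proposal is correct and follows essentially the same route as the paper: the paper derives the dissipation estimate \eqref{eq:dissipation_second_order_continuous}, feeds it into the continuous unifying Lemma \ref{lem: general descent lemma for continuous case} with $\eta=2$, and then simply invokes ``arguing similarly as in the proof of Theorem \ref{thm:second_order_main}'' for the Opial argument, the $p(t)$/$q(t)$ decomposition, and the little-$o$ upgrade---precisely the steps you spell out (in fact in more detail than the paper, including the clean identity $p(t)=(\alpha-1)\varphi(t)+t\dot\varphi(t)$, which in continuous time avoids the extra $\tfrac{t_k}{2\theta}\|x_k-x_{k-1}\|^2$ correction term of the discrete case).
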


\printbibliography

\end{document}